\documentclass[11pt, twoside]{article}

\usepackage{amssymb}
\usepackage{amsmath}
\usepackage{amsthm}
\usepackage{color}
\usepackage{xcolor}
\usepackage{mathrsfs}
\usepackage[active]{srcltx}
\usepackage{esint}
\allowdisplaybreaks

\pagestyle{myheadings}\markboth{\footnotesize\rm\sc
	Qiumeng Li,  Haibo Lin and Sibei Yang}
{\footnotesize\rm\sc Predual Spaces of Hardy Spaces}

\textwidth=15cm
\textheight=21cm
\oddsidemargin 0.35cm
\evensidemargin 0.35cm

\parindent=13pt

\def\rn{{\mathbb{R}^n}}

\def\fz{\infty}

\def\dz{\delta}
\def\bdz{\Delta}
\def\ez{\epsilon}

\def\kz{\kappa}
\def\lz{\lambda}

\def\oz{\omega}

\def\sz{\sigma}

\def\lf{\left}
\def\r{\right}

\def\hs{\hspace{0.25cm}}
\def\ls{\lesssim}
\def\gs{\gtrsim}

\def\dist{\mathop\mathrm{\,dist\,}}
\def\loc{{\mathop\mathrm{\,loc\,}}}
\def\supp{\mathop\mathrm{\,supp\,}}

\newtheorem{theorem}{Theorem}[section]
\newtheorem{lemma}[theorem]{Lemma}

\newtheorem{proposition}[theorem]{Proposition}

\theoremstyle{definition}
\newtheorem{remark}[theorem]{Remark}
\newtheorem{definition}[theorem]{Definition}
\renewcommand{\appendix}{\par
	\setcounter{section}{0}%
	\setcounter{subsection}{0}%
	\setcounter{subsubsection}{0}%
	\gdef\thesection{\@Alph\c@section}%
	\gdef\thesubsection{\@Alph\c@section.\@arabic\c@subsection}%
	\gdef\theHsection{\@Alph\c@section.}%
	\gdef\theHsubsection{\@Alph\c@section.\@arabic\c@subsection}%
	\csname appendixmore\endcsname
}

\numberwithin{equation}{section}

\begin{document}
	
\arraycolsep=1pt
	
\title{\bf\Large  Predual Spaces of Hardy Spaces Related to
Fractional Schr\"odinger Operators
\footnotetext{\hspace{-0.2cm}2020
{\it Mathematics Subject Classification}. {Primary 42B35; Secondary 42B30, 30H35.}
\endgraf{\it Key words and phrases.} Fractional Schr\"odinger operator, Hardy space, VMO space, predual space,
tent space}}
\author{Qiumeng Li, Haibo Lin\footnote{Corresponding author}\ and Sibei Yang}
\date{}
	
\maketitle
	
\vspace{-0.8cm}
	
\begin{center}
\begin{minipage}{13cm}
{\small {\bf Abstract.}\quad Let $n\in\mathbb{N}$ and ${\alpha}\in(0,\min\{2,n\})$. For any
$a\in[a^\ast,\infty)$, the fractional Schr\"odinger operator $L_\alpha$ is defined by
\begin{equation*}
L_\alpha:=(-\Delta)^{{\alpha}/2}+a{|x|}^{-{\alpha}},
\end{equation*}
where $a^*:=-{\frac{2^{\alpha}{\Gamma}((d+{\alpha})/4)^2}{{\Gamma}((d-{\alpha})/4)^2}}$. Let
$\gamma\in[0,\frac{\alpha}{n})$. In this paper, we introduce the VMO-type spaces $\mathrm{VMO}_{L_\alpha}^\gamma
(\mathbb{R}^{n})$ associated with $L_\alpha$, and characterize these spaces via some tent spaces. We also
prove that, for any given $p\in(\frac{n}{n+\alpha},1]$, the space $\mathrm{VMO}_{L_\alpha}^{\frac{1}{p}-1}
(\mathbb{R}^{n})$ is the predual space of the Hardy space $H_{L_\alpha}^p\lf(\mathbb{R}^{n}\r)$ related to $L_\alpha$. }
\end{minipage}
\end{center}
	
\section{Introduction}\label{s1}
\hskip\parindent
The theory of real variables in function spaces is one of the central contents of harmonic analysis. The
classical function spaces studied in harmonic analysis include Lebesgue spaces, BMO spaces, Hardy spaces, etc.
The BMO space is the bounded mean oscillation space, which is a kind of function space introduced by John and
Nirenberg \cite{jn61} when they studied the solutions of elliptic partial differential equations. It contains
the $L^\infty$ space and is the dual space of the Hardy space \cite{g141,g142}. It is well known that BMO spaces
have extensive applications in complex analysis, probability theory, and partial differential equations. In
the 1970s, Coifman and Weiss \cite{cw77} introduced a function space of vanishing mean oscillation, which is
recorded as VMO$(\mathbb{R}^{n})$. The VMO space is defined as the closure of continuous function space with
compact support under BMO norm. The VMO space is a subspace of the BMO space, which is widely concerned in PDEs.
Then Coifman and Weiss \cite{cw77} proved that the Hardy space is the dual space of the VMO space. In 1978,
Uchiyama \cite{u78} conducted a more in-depth study on VMO$(\mathbb{R}^{n})$, and he proved that VMO$(\mathbb{R}^{n})$
spaces can be described by the average oscillation limit on the cube. It is worth noting that a characteristic
of VMO spaces is also given by the compactness of the commutator of singular integrals. Classical BMO and VMO
theories are closely related to the Laplace operator $\bdz$. The generalization of operators also brings new
challenges to the study of BMO spaces and VMO spaces. In 1926, after the famous Austrian physicist Schr\"odinger
further promoted the concept of de Broglie matter wave, he obtained the equation of motion that the matter
wave satisfies in the quantum system, namely, the Schr\"odinger equation
$$ih\frac{\partial \psi}{\partial t}=\lf(-\frac{h^2}{2m}\bdz+V\r)\psi.$$
For the study of this equation, ones can solve relevant problems by studying the following Schr\"odinger
operator $L:=-\bdz+V$. Since it was proposed, it has been concerned by many scholars and has made fruitful
research results; see, for instance, \cite{s82,blm04,d95,rr06} and their references. In particular, Dziuba\'nskiet al.
\cite{dgmtz05} first studied the BMO space related to Schr\"odinger operators whose potential satisfies
the reverse H\"older inequality. See \cite{ttd21,yyz10} for recent progress.
	
The heat kernel of Schr\"odinger operators whose potential satisfies the reverse H\"older inequality have
the Poisson upper bound estimate. Recall that the kernel is said to satisfy the Poisson upper bound estimate of order
$m\in(0,\fz)$, if it satisfies
\begin{equation*}
|p_t(x,y)|\le h_t(x,y):=t^{-\frac{n}{m}}s\lf(\frac{|x-y|}{t^{1/m}}\r),\ \forall\ x,y\in\mathbb{R}^{n},
\end{equation*}
where $s$ is a positive bounded increasing function satisfying that, for every $\ez\in(0,\infty)$,
$\lim_{r\rightarrow\infty}r^{n+\ez}s(r)=0$. Let $L$ be the operator that the kernel of $e^{-tL}$
satisfies the Poisson upper bound estimation. In 2005, Duong et al. \cite{dy051} introduced the Hardy space
$H_L^1(\mathbb{R}^{n})$ via the area square function, and then established its molecular and atomic
characterizations, and proved that its dual space is the BMO space.
In 2008, Deng et al. \cite{ddsty08} introduced and characterized a new function space VMO$_L(\mathbb{R}^{n})$
of vanishing mean oscillation related to the operator $L$, and
proved that it is the predual space of $H_L^1(\mathbb{R}^{n})$. In addition, Yan \cite{y08} generalized the
results in \cite{ddsty08} to $H_L^p(\mathbb{R}^{n})$ with some $p\in(0,1]$. In 2011, Jiang and Yang
\cite{jy112} introduced a VMO type space related to the conjugate operator $L^*$ of $L$, and proved that it is
the predual space of the Banach completion space $B_{\oz,L}(\mathbb{R}^{n})$ of Orlicz--Hardy space
$H_{\oz,L}(\mathbb{R}^{n})$.
	
The Schr\"odinger operator with magnetic fields and real potentials satisfies the Davies--Gaffney estimate.
Let $(\mathrm{X},d,\mu)$ be a space of homogeneous type in the sense of Coifman and Weiss \cite{cw71,cw77}, which includes the Euclidean space with Lebesgue measures as a special case. For the operator family $\{T_t\}$, if there are
positive constants $C$ and $c$ such that, for any $t\in(0,\fz)$,
\begin{align*}
|\langle T_tf_1,f_2\rangle|\le C\exp\lf\{-\frac{[\dist(U_1,U_2)]^2}{ct}\r\}
\|f_1\|_{L^2{(\mathrm{X})}}\|f_2\|_{L^2{(\mathrm{X})}}
\end{align*}
holds for any $f_i\in L^2{(\mathrm{X})}$, $\supp f_i\subset U_i$, $U_i\subset \mathrm{X}$
with $i\in\{1,2\}$, then it is said that $\{T_t\}_{t>0}$ satisfies the Davies--Gaffney estimate. Suppose that
the semigroup generated by the operator $L$ satisfies the Davies--Gaffney estimate. In 2011, Hofmann et al.
developed the theory of Hardy spaces and BMO spaces related to operator $L$ in \cite{hlmmy11}, including
atomic (or molecular) characterization, area function characterization, and duality of Hardy spaces. In 2010,
Jiang and Yang \cite{jy10} studied the theory of VMO-type spaces related to the operator $L$.

To our best knowledge, most of the existing harmonic analysis theories related to operators are based on the fact that the
heat kernel satisfies the Poisson upper bound estimate or the operator satisfies the Davies--Gaffney estimate.
For more research on function spaces related to operators; see, for instance,
\cite{adm05,amr08,ar03,cy22,ddsty08,ddy05,dl13,dxy07,dy051,dy052,hlmmy11,hm09,jy10,jy111,jy112,y04,y08} and their references.

In 2019, Bogdan et al. \cite{bgjp19} established the heat kernel estimation of semigroup generated by
the fractional Schr\"odinger operator $L_\alpha:=(-\Delta)^{\alpha/2}+\kz|x|^\alpha$, $\kz\in(0,\infty)$.
See \cite{jxhs10,zy09} for relevant research on the heat kernel estimation of this operator.
We consider the following fractional Schr\"odinger operators on $\mathbb{R}^{n}$, $n\in\mathbb{N}$,
\begin{equation}\label{1.1}
L_{\alpha}:=(-\Delta)^{\alpha / 2}+a|x|^{-\alpha} \quad \text {with} \quad \alpha\in(0,\min \{2, n\})
\quad \text { and } \quad a\in [a^{*},\infty),
\end{equation}
where	
$$
a^{*}:=-\frac{2^{\alpha} \Gamma((d+\alpha) / 4)^{2}}{\Gamma((d-\alpha) / 4)^{2}}.
$$
The constant $a^{*}$ is reasoned by the sharp constant in the following Hardy-type inequality
$$
\int_{\mathbb{R}^{n}}|x|^{-\alpha}|u(x)|^{2}\,dx\leq-\frac{1}{a^{*}}
\int_{\mathbb{R}^{n}}|\xi|^{\alpha}|\widehat{u}(\xi)|^{2} d \xi, \quad
u \in C_{0}^{\infty}(\mathbb{R}^{n}),
$$
where $\widehat{u}$ denotes the Fourier transform of $u$. Hence, the condition $a\in [a^{*},\infty)$
guarantees that the operator $L_{\alpha}$ is non-negative and self-adjoint; see, for instance, \cite{bb21,fms21,h77}.
The operator $L_{\alpha}$ can be viewed as a Sch\"odinger operator of the fractional Laplacian $L_{\alpha}
=(-\Delta)^{\alpha/2}+V$ with $V:=a|x|^{-\alpha}$. In this case the potential $V$ might be negative.
The heat kernel of the operator $L_\alpha$ does not satisfy the Poisson upper bound estimate and, moreover
$L_\alpha$ also does not satisfy the Davies--Gaffney estimate; see, for instance, \cite[Theorems 2.1 and 2.2]{bd21}.
In 2022, Bui and Nader \cite{bn22} introduced the Hardy space $H_{L_\alpha}^p(\mathbb{R}^{n})$ via the area square function, and then established its molecular characterization,
and proved that its dual space is the BMO space.

Inspired by the above research, in this paper, we introduce the VMO-type space $\mathrm{VMO}_{L_\alpha,M}^\gamma
(\mathbb{R}^{n})$ associated with the fractional Schr\"odinger operator $L_\alpha$, and then prove that it is the predual space
of the Hardy space $H_{L_\alpha}^p(\mathbb{R}^{n})$. Although the approach in our paper bases on those in
\cite{jy112,bn22}, a number of important improvements and modifications are needed. This is because the heat
kernel of the operator $L_\alpha$ does not satisfy the Poisson upper bound estimate and the operator does not
satisfy the Davies--Gaffney estimate.

The organization of this paper is as follows. In Section 2, we first recall the kernel estimate of the operator
$L_\alpha$ and some related properties, then we recall the definitions of Hardy spaces and BMO spaces related to the operator
$L_\alpha$, and establish some necessary lemmas related to the operator $L_\alpha$, such as the Carleson
measure characterization of BMO spaces. Section 3 is devoted to define VMO-type spaces $\mathrm{VMO}_{L_\alpha,M}^\gamma
(\mathbb{R}^{n})$ and establish the Carleson measure characterization of these spaces. In Section 4,
we give the proof of
$$\lf(\mathrm{VMO}_{L_{\alpha}, M_0}^{\frac{1}{p}-1}(\mathbb{R}^{n})\r)^{*}=H_{L_\alpha}^p(\mathbb{R}^{n})$$
for $p\in(\frac{n}{n+\alpha},1]$.

Finally, we make some conventions on notations. We assume that $C$ is a positive constant, which is independent
of the main parameter, but may vary from line to line. Marked constants, such as $c_{(M,M_1)}$ and $C_{(\alpha,M)}$,
do not change in different cases. The \emph{symbol} $f\ls g$ means that $f\le Cg$. If $f\ls g$ and $g\ls f$,
we then write $f\sim g$. If $f\le Cg$ and $g=h$ or $g\le h$, we then write $f\ls g\sim h$
or $f\ls g\ls h$, \emph{rather than} $f\ls g=h$ or $f\ls g\le h$. For any ball $B\subseteq \rn$, we use $x_B$ and $r_B$ to denote its center and radius,
respectively. Given a ball $B$ and $\lz\in(0,\,\infty)$, $\lz B$ represents the ball which has the same center
of $B$ and the radius is $\lz$ times of $B$. For any $p\in(1,\,\infty)$, let $p':=p/(p-1)$ be its \emph{conjugate index}.
Given a measurable set $E\subset \rn$, we denote the characteristic function of $E$ with $\chi_E$. We use
$\mathbb{N}$ to denote the set of all positive integers, that is, $\mathbb{N}:=\{1,2,\dots\}$.
In addition, for any $f\in L^1(E)$, we denote the integral $\int_{E}|f(x)|\,dx$ simply by
$\int_{E}|f|\,dx$ and, when $|E|<\fz$, we use the notation
$$
\fint_Ef:=\frac{1}{|E|}\int_{E}f(x)\,dx.
$$

\section{Preliminaries}\label{s2}
	
To present our results, we first introduce some necessary notation and notions.
	
We set
$$
\Psi_{\alpha, n}(\delta):=-2^{\alpha} \frac{\Gamma\left(\frac{\delta+\alpha}{2}\right)
\Gamma\left(\frac{n-\delta}{2}\right)}{\Gamma\left(\frac{n-\delta-\alpha}{2}\right)
\Gamma\left(\frac{\delta}{2}\right)}, \quad \delta \in(-\alpha,(n-\alpha)/2] \backslash\{0\},
$$
and $\Psi_{\alpha, n}(0):=0$. The function $\Psi_{\alpha, n}$ is continuous and strictly decreasing in
$(-\alpha,(n-\alpha)/2]$ with
$$
\lim _{\delta \rightarrow-\alpha} \Psi_{\alpha, n}(\delta)=\infty \quad \text { and } \quad
\Psi_{\alpha, n}\left(\frac{n-\alpha}{2}\right)=a^{*}.
$$
For any $a\in[a^{*},\infty)$, we define
\begin{equation}\label{2.1}
\sigma:=\Psi_{\alpha, n}^{-1}(a),
\end{equation}
then $\sigma \in(-\alpha,(n-\alpha)/2]$.
	
For a constant $\sigma \in \mathbb{R}$, we denote
$$
D_{\sigma}(x,t):=\left(1+\frac{t^{\frac{1}{\alpha}}}{|x|}\right)^{\sigma}
$$
and
\begin{equation*}
n_\sz:=\lf\{\begin{array}{rcl}
\frac{n}{\sz} & & ,{\sz\in(0,\infty)},\\
\infty & & ,{\sz\in(-\infty,0]}.
\end{array}\r.
\end{equation*}

\begin{lemma}[{\cite[Lemma 2.3]{bn22}}]\label{l2.1}
Let $\sz\in(-\infty,n)$. Then there exist positive constants $C,\widetilde{C}$ such that, for any
$t\in(0,\,\infty)$, any $B(x,r_B)$ with $x\in\mathbb{R}^n$ and $r_B\in(0,\,\infty)$ and any $f\in L^2_{\loc}
(\mathbb{R}^n)$,
\begin{align*}
\int_{B(x, r_B)}D_\sz (z,t)dz\le C \min\lf\{t^\frac{n}{\alpha}+r_B^n,t^\frac{\sz}{\alpha}r_B^{n-\sz}+r_B^n\r\}
\end{align*}
and
\begin{align*}
\int_{B(x, r_B)}|D_\sz (z,t)f(z)|dz\le \widetilde{C} t^\frac{n}{2\alpha}\|f\|_{L^2(B)}+\|f\|_{L^1(B(x,r_B))}.
\end{align*}
\end{lemma}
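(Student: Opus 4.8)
\noindent\emph{Proof proposal.}\quad The plan is to treat the two displayed inequalities separately, and in each case to split the ball $B:=B(x,r_B)$ into the region where $|z|\ge t^{1/\az}$ and the region where $|z|<t^{1/\az}$, using two elementary pointwise bounds for $D_\sz$. On $\{|z|\ge t^{1/\az}\}$ one has $1\le 1+t^{1/\az}/|z|\le 2$, hence $D_\sz(z,t)\le 2^{|\sz|}$. On $\{|z|<t^{1/\az}\}$, if $\sz\le 0$ then still $D_\sz(z,t)\le 1$, while if $\sz>0$ then $1+t^{1/\az}/|z|\le 2t^{1/\az}/|z|$ gives $D_\sz(z,t)\le 2^\sz t^{\sz/\az}|z|^{-\sz}$. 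In view of these bounds the part of each integral over $\{|z|\ge t^{1/\az}\}$ is at most $2^{|\sz|}|B|\ls r_B^n$ (respectively $2^{|\sz|}\|f\|_{L^1(B)}$), which is already controlled by the asserted right-hand sides; so the whole matter reduces, for $\sz\in(0,n)$, to estimating $\int_{B\cap\{|z|<t^{1/\az}\}}|z|^{-\sz}\,dz$ and $\int_{B\cap\{|z|<t^{1/\az}\}}|z|^{-\sz}|f(z)|\,dz$.

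For the first inequality I would bound the domain $B\cap\{|z|<t^{1/\az}\}$ against its two natural supersets. Using $B\cap\{|z|<t^{1/\az}\}\st B(0,t^{1/\az})$ and integrating $|z|^{-\sz}$ in polar coordinates (the integral being finite since $\sz<n$) gives, after multiplying by $2^\sz t^{\sz/\az}$, a contribution $\ls t^{n/\az}$. Using instead $B\cap\{|z|<t^{1/\az}\}\st B(x,r_B)$ together with the uniform estimate $\int_{B(x,r_B)}|z|^{-\sz}\,dz\ls r_B^{n-\sz}$ — which one verifies by splitting into the cases $|x|\ge 2r_B$ (so $|z|\sim|x|\ge r_B$ throughout $B$) and $|x|<2r_B$ (so $B\st B(0,3r_B)$, again by polar coordinates) — gives a contribution $\ls t^{\sz/\az}r_B^{n-\sz}$. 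Adding the term $Cr_B^n$ coming from $\{|z|\ge t^{1/\az}\}$ to each of these bounds and keeping the smaller one produces the claimed $\min$; for $\sz\le 0$ one simply has $D_\sz\le 1$, so the left-hand side is $\ls r_B^n$, which is dominated by both entries of the $\min$.

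For the second inequality the only new ingredient is the region $\{|z|<t^{1/\az}\}$ with $\sz\in(0,n)$, where I would apply the Cauchy--Schwarz inequality in the form
$$\int_{B\cap\{|z|<t^{1/\az}\}}2^\sz t^{\sz/\az}|z|^{-\sz}|f(z)|\,dz\ \le\ 2^\sz t^{\sz/\az}\,\|f\|_{L^2(B)}\lf(\int_{B(0,t^{1/\az})}|z|^{-2\sz}\,dz\r)^{1/2},$$
evaluate the last integral in polar coordinates (finite in the range $2\sz<n$ relevant here) to get $\ls t^{(n-2\sz)/\az}$, and observe that the exponents of $t$ then collapse, $\sz/\az+(n-2\sz)/(2\az)=n/(2\az)$, which yields the term $\widetilde C\,t^{n/(2\az)}\|f\|_{L^2(B)}$; combined with the contribution $\|f\|_{L^1(B)}$ from $\{|z|\ge t^{1/\az}\}$ this is the claim. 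I expect the only points requiring care to be the verification of the uniform power-weight bound over an arbitrary ball $B(x,r_B)$ (the two-case argument above, where the ball may be centred near or far from the origin) and the bookkeeping that converts the two domain inclusions into the $\min$; the exponent arithmetic in the Cauchy--Schwarz step is the one place where the precise power $t^{n/(2\az)}$ is forced, and it is essentially the crux of the estimate.
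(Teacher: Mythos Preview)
The paper does not supply its own proof of this lemma: it is quoted verbatim from \cite[Lemma~2.3]{bn22} and used as a black box, so there is nothing in the present paper to compare your argument against. That said, your proof is correct and is essentially the natural one: the split at $|z|=t^{1/\az}$, the pointwise bounds $D_\sz\le 2^{|\sz|}$ outside and $D_\sz\ls t^{\sz/\az}|z|^{-\sz}$ inside (for $\sz>0$), the two inclusions $B\cap\{|z|<t^{1/\az}\}\subset B(0,t^{1/\az})$ and $\subset B$ producing the two entries of the $\min$, and the Cauchy--Schwarz step collapsing the exponents to $n/(2\az)$ are exactly what one expects.

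One point worth flagging explicitly rather than parenthetically: your Cauchy--Schwarz step needs $\int_{B(0,t^{1/\az})}|z|^{-2\sz}\,dz<\fz$, i.e.\ $2\sz<n$, whereas the lemma is stated for all $\sz<n$. This is not a defect of your argument but of the statement as transcribed here: for $\sz\in[n/2,n)$ the second inequality is in fact false (take $B=B(0,r_B)$, $f\equiv 1$, and let $t\to\fz$; the left side grows like $t^{\sz/\az}$ while the right side grows like $t^{n/(2\az)}$). In the paper's applications $\sz$ is the parameter from \eqref{2.1}, which lies in $(-\az,(n-\az)/2]\subset(-\fz,n/2)$, so the restriction is harmless in context --- and indeed the paper only ever invokes the second estimate with $D_\sz$ or $D_{2\sz}$ for that range. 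Your remark ``relevant here'' is therefore justified, but it would be cleaner to state the restriction outright.
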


In what follows, for a given ball $B$, the associated annuli are the sets $S_j(B):=(2^jB)\backslash(2^{j-1}B)$
for $j=1,2,3,\ldots,$ while we write $S_0(B):=B$. We now recall the following heat kernel estimate in
\cite{bd21,bn22}; see also \cite{bgjp19}.

\begin{theorem}[{\cite[Theorem 3.1]{bd21}}]\label{t2.2}
Let $\left\{T_{t}\right\}_{t>0}$ be a family of linear operators on $L^{2}(\mathbb{R}^{n})$ with their
associated kernels $\{T_{t}(\cdot,\cdot)\}_{t>0}$. Assume that there exist a positive constant $C$ and
$\sz\in(-\infty, n]$ such that, for all $t\in(0,\,\infty)$ and $x, y \in \mathbb{R}^{n} \backslash\{0\}$,
$$
\left|T_{t}(x, y)\right| \leq C t^{-n / \alpha}\left(\frac{t^{1 / \alpha}+|x-y|}{t^{1 / \alpha}}
\right)^{-n-\alpha} D_{\sz}(x, t) D_{\sz}(y, t) .
$$
Assume that $n_{\sz}^{\prime}<p \leq q<n_{\sz}$. Then there exists a positive constant $c$ such that, for any ball $B$, for every $t\in(0,\infty)$ and $j \in \mathbb{N}$, we have
$$
\begin{aligned}
\left(\fint_{S_{j}(B)}\left|T_{t} f\right|^{q}\right)^{1 / q} &\leq c \max \left\{\left(\frac{r_{B}}{t^{1 /
\alpha}}\right)^{n / p},\left(\frac{r_{B}}{t^{1 / \alpha}}\right)^{n}\right\}\left(1+\frac{t^{1 / \alpha}}{2^{j} r_{B}}\right)^{n / q}\\
&\hs\times\left(1+\frac{2^{j} r_{B}}{t^{1 / \alpha}}\right)^{-n-\alpha}\left(\fint_{B}|f|^{p}\right)^{1 / p}
\end{aligned}
$$
for any $f \in L^{p}(\mathbb{R}^{n})$ supported in $B$, and
\begin{align*}
\left(\fint_{B}\left|T_{t} f\right|^{q}\right)^{1 / q} &\leq c \max \left\{\left(\frac{2^{j} r_{B}}{t^{1 / \alpha}}\right)^{n},
\left(\frac{2^{j} r_{B}}{t^{1 / \alpha}}\right)^{n / p}\right\}\left(1+\frac{t^{1 / \alpha}}{r_{B}}\right)^{n / q}\\
&\hs\times\left(1+\frac{2^{j} r_{B}}{t^{1 / \alpha}}\right)^{-n-\alpha}\left(\fint_{S_{j}(B)}|f|^{p}\right)^{1 / p}
\end{align*}
for any $f \in L^{p}\left(S_{j}(B)\right)$.
\end{theorem}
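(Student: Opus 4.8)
The plan is to derive both inequalities from the pointwise kernel bound, using H\"older's inequality together with Lemma~\ref{l2.1} to absorb the weights $D_\sigma(\cdot,t)$ and a dyadic decomposition of the inner integration domain to extract the decay. I would begin by noting that the two estimates are dual to each other: for $f$ supported in $B$ and $g$ supported in $S_j(B)$ one has $\lf\langle T_t f,g\r\rangle=\lf\langle f,T_t^{\ast} g\r\rangle$, the adjoint $T_t^{\ast}$ obeys the same kernel bound with the same $\sigma$, and the range condition $n_\sigma'<p\le q<n_\sigma$ passes to $n_\sigma'<q'\le p'<n_\sigma$; keeping track of the volume ratio $|S_j(B)|/|B|\sim2^{jn}$ that enters when one converts between $\fint_B$ and $\fint_{S_j(B)}$, this reduces the second estimate to the first with $(p,q)$ replaced by $(q',p')$. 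Hence it suffices to prove the first estimate.

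So fix $B$, $t\in(0,\fz)$, $j\in\nn$, and $f\in L^p(\rn)$ supported in $B$. For $x\in S_j(B)$ the kernel hypothesis yields
\begin{align*}
|T_tf(x)|\ls t^{-n/\alpha}D_\sigma(x,t)\int_B\lf(1+\frac{|x-y|}{t^{1/\alpha}}\r)^{-n-\alpha}D_\sigma(y,t)\,|f(y)|\,dy.
\end{align*}
Put $\rho_x:=\max\{\dist(x,B),t^{1/\alpha}\}$ and split $B$ into $A_0(x):=B\cap B(x,\rho_x)$ and the dyadic shells $A_k(x):=B\cap(B(x,2^k\rho_x)\setminus B(x,2^{k-1}\rho_x))$, $k\ge1$; on $A_k(x)$ one has $|x-y|\gs2^k\rho_x\ge2^kt^{1/\alpha}$, so the kernel factor there is $\ls(2^k\rho_x/t^{1/\alpha})^{-n-\alpha}$. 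On each shell I apply H\"older's inequality in $y$ with exponents $p$ and $p'$; since $p>n_\sigma'$ forces $\sigma p'<n$, Lemma~\ref{l2.1} controls $(\int_{A_k(x)}D_{\sigma p'}(y,t)\,dy)^{1/p'}$ by the stated $\min\{\cdot,\cdot\}$-type quantity at scale $2^k\rho_x$. Summing the ensuing geometric series in $k$ — convergent precisely because the $(n+\alpha)$-order decay beats the growth supplied by Lemma~\ref{l2.1} when $p>n_\sigma'$ — produces a bound for $|T_tf(x)|$ of the shape $D_\sigma(x,t)$ times a fixed product of powers of $t^{1/\alpha}$, $r_B$ and $\rho_x$ times $\|f\|_{L^p(B)}$.

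Next I raise this to the $q$-th power and integrate over $S_j(B)$. When $j\ge2$ one has $\rho_x\sim2^jr_B$ throughout $S_j(B)$, so the $\rho_x$-factor is essentially constant and it remains only to bound $\int_{S_j(B)}D_{\sigma q}(x,t)\,dx$; since $q<n_\sigma$ forces $\sigma q<n$, Lemma~\ref{l2.1} applies once more. When $j=1$, where $S_1(B)$ abuts $B$ and $\rho_x$ is no longer constant, I would instead decompose $S_1(B)$ according to the dyadic size of $\dist(x,\partial B)$ together with the boundary layer $\{x\in S_1(B):\dist(x,B)<t^{1/\alpha}\}$, estimate the resulting integral on each piece via Lemma~\ref{l2.1}, and sum. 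Finally, writing $\|f\|_{L^p(B)}=|B|^{1/p}(\fint_B|f|^p)^{1/p}$ with $|B|\sim r_B^n$ and $|S_j(B)|\sim(2^jr_B)^n$, and collecting all the powers of $t^{1/\alpha}$, $r_B$ and $2^jr_B$ into the factors $\max\{(r_B/t^{1/\alpha})^{n/p},(r_B/t^{1/\alpha})^n\}$, $(1+t^{1/\alpha}/2^jr_B)^{n/q}$ and $(1+2^jr_B/t^{1/\alpha})^{-n-\alpha}$, gives the assertion.

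The step I expect to be the main obstacle is this last reassembly of the right-hand side for small $j$, and above all $j=1$: since $S_1(B)$ and $B$ share a boundary, $S_1(B)$ has a near-boundary layer and the $y$-integral a near-diagonal part on which the kernel carries no decay, so the factor $(1+2^jr_B/t^{1/\alpha})^{-n-\alpha}$ and the prefactor $\max\{(r_B/t^{1/\alpha})^{n/p},(r_B/t^{1/\alpha})^n\}$ must be produced from the comparative smallness of those sets (and the sharp powers in Lemma~\ref{l2.1}) rather than from a pointwise bound; this is exactly where the conditions $n_\sigma'<p\le q<n_\sigma$ are used most sharply, making every geometric series converge and every $D_\sigma$-integral in Lemma~\ref{l2.1} finite. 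The complementary situations — $j\ge2$, and the ``near-diagonal'' regime $2^jr_B\ll t^{1/\alpha}$ in which the crude bound $(1+|x-y|/t^{1/\alpha})^{-n-\alpha}\le1$ already suffices — are routine.
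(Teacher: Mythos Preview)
The paper does not give its own proof of this theorem: it is quoted verbatim as \cite[Theorem~3.1]{bd21} and stated in Section~\ref{s2} as a preliminary, with no argument supplied. There is therefore nothing in the present paper to compare your proposal against; the actual proof lives in Bui--D'Ancona.

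That said, your sketch is along the right lines for how such off-diagonal $L^p\!\to\!L^q$ estimates are typically derived from a pointwise kernel bound: H\"older in the inner variable, Lemma~\ref{l2.1} to integrate the weights $D_{\sigma p'}$ and $D_{\sigma q}$ (this is exactly where the range $n_\sigma'<p\le q<n_\sigma$ enters), and for $j\ge2$ the uniform separation $|x-y|\sim2^jr_B$ makes the analysis clean. Two cautions. First, the duality reduction is not quite as automatic as you suggest: once you track the averaging volumes $|B|$ and $|S_j(B)|\sim(2^jr_B)^n$, the first estimate applied to $T_t^\ast$ with exponents $(q',p')$ does not literally produce the factors $\max\{(2^jr_B/t^{1/\alpha})^n,(2^jr_B/t^{1/\alpha})^{n/p}\}$ and $(1+t^{1/\alpha}/r_B)^{n/q}$ of the second estimate, but rather factors with $r_B$ in place of $2^jr_B$ and $2^jr_B$ in place of $r_B$; you would need to check that the discrepancy is absorbed by the $2^{jn}$ volume ratio, and in fact it is usually simpler to argue the two estimates in parallel rather than via duality. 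Second, your handling of $j=1$ (boundary layer in $S_1(B)$, dyadic shells in $B$) is the correct diagnosis of where the work is, but the bookkeeping there is genuinely delicate and your outline stops short of showing that the pieces reassemble into the precise stated form; this is the step where a full write-up would need the most care.
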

	
\begin{proposition}[{\cite[Proposition 3.5]{bd21}}]\label{p2.3}
Let $n \in \mathbb{N}$, $\alpha\in(0,\min\{2, n\}),$ and let $\sigma$ be defined by \eqref{2.1}.
Then, for any $k \in \mathbb{N}$, there exists $C_{(k)}\in(0,\,\infty)$ such that, for all $t\in(0,\,\infty)$ and
$x, y \in \mathbb{R}^{n} \backslash\{0\}$,
$$
\left|p_{t, k}(x, y)\right| \leq C_{(k)} t^{-(k+n/ \alpha)} D_{\sigma}(x, t) D_{\sigma}(y, t)\left(\frac{t^{1 /
\alpha}+|x-y|}{t^{1 / \alpha}}\right)^{-n-\alpha},
$$
where $p_{t, k}(x, y)$ is an associated kernel to $\mathcal{L}_{\alpha}^{k} e^{-t \mathcal{L}_{\alpha}}$.
\end{proposition}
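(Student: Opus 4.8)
The plan is to read off the bound for $L_\alpha^{k}e^{-tL_\alpha}$ from the $k=0$ heat kernel estimate by differentiating $k$ times in the time variable and exploiting the analyticity of the semigroup. Since $a\in[a^{*},\fz)$ together with the sharp Hardy inequality recalled in Section~\ref{s1} makes $L_\alpha$ a non-negative self-adjoint operator on $L^{2}(\rn)$, the semigroup $\{e^{-tL_\alpha}\}_{t>0}$ extends to a bounded analytic semigroup on the sector $\Sigma_{\theta}:=\{z\in\cc\setminus\{0\}:\ |\arg z|<\theta\}$ for every $\theta\in(0,\pi/2)$; in particular, for fixed $x,y\in\rn\setminus\{0\}$ the function $z\mapsto p_{z}(x,y)$ is holomorphic on $\Sigma_{\pi/2}$. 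I will also use the $k=0$ estimate of \cite{bgjp19,bd21} --- which is precisely the hypothesis of Theorem~\ref{t2.2} applied to $T_{t}=e^{-tL_\alpha}$ --- namely that there is $C\in(0,\fz)$ with
$$
|p_{t}(x,y)|\le C\,t^{-n/\az}\lf(\frac{t^{1/\az}+|x-y|}{t^{1/\az}}\r)^{-n-\az}D_{\sz}(x,t)\,D_{\sz}(y,t)
$$
for all $t\in(0,\fz)$ and $x,y\in\rn\setminus\{0\}$.

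The main step, and the one I expect to be the only non-routine point, is to promote this to a complex-time estimate: there should exist $\theta_{0}\in(0,\pi/2)$ and $C\in(0,\fz)$ such that, for all $z\in\Sigma_{\theta_{0}}$ and $x,y\in\rn\setminus\{0\}$,
$$
|p_{z}(x,y)|\le C\,|z|^{-n/\az}\lf(\frac{|z|^{1/\az}+|x-y|}{|z|^{1/\az}}\r)^{-n-\az}D_{\sz}(x,|z|)\,D_{\sz}(y,|z|).
$$
Here one may either invoke the complex-time kernel bounds established along the way to \cite[Proposition~3.5]{bd21}, or argue directly: represent the kernel of $e^{-zL_\alpha}$ by the Duhamel (perturbation) series around the free fractional heat semigroup $e^{-z(-\Delta)^{\az/2}}$, whose complex-time kernel bound $|e^{-z(-\Delta)^{\az/2}}(x,y)|\ls|z|^{-n/\az}(1+|x-y|/|z|^{1/\az})^{-n-\az}$ on subsectors is classical, and control the potential $V=a|x|^{-\az}$ through the Hardy inequality; the factor $D_{\sz}$ is exactly what makes the iterated perturbation integrals close. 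Analyticity of the $L^{2}$-semigroup, the real-time bound, and a Phragm\'en--Lindel\"of argument then carry the estimate into a full subsector.

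Granting this, fix $k\in\nn$ and put $\eta:=\frac{1}{2}\sin\theta_{0}$. For every $t\in(0,\fz)$ the closed disc $\overline{B(t,\eta t)}\subset\cc$ lies in $\Sigma_{\theta_{0}}$, and on its boundary circle $|z|\sim t$ with constants depending only on $\eta$; hence $D_{\sz}(x,|z|)\sim D_{\sz}(x,t)$ and $(|z|^{1/\az}+|x-y|)/|z|^{1/\az}\sim(t^{1/\az}+|x-y|)/t^{1/\az}$, uniformly in $x,y$, because each is monotone, respectively power-type, in $|z|^{1/\az}$. Since $L_\alpha^{k}e^{-tL_\alpha}=(-1)^{k}\partial_{t}^{k}e^{-tL_\alpha}$ and the bounds above justify differentiation under the integral, one has $p_{t,k}(x,y)=(-1)^{k}\partial_{t}^{k}p_{t}(x,y)$, so Cauchy's integral formula on the circle $|z-t|=\eta t$ gives
$$
p_{t,k}(x,y)=\frac{(-1)^{k}k!}{2\pi i}\oint_{|z-t|=\eta t}\frac{p_{z}(x,y)}{(z-t)^{k+1}}\,dz.
$$
Estimating the integrand with the complex-time bound and using that the circle has radius $\eta t$ and length $2\pi\eta t$, we obtain
$$
|p_{t,k}(x,y)|\ls\frac{k!}{(\eta t)^{k}}\sup_{|z-t|=\eta t}|p_{z}(x,y)|\ls t^{-(k+n/\az)}\lf(\frac{t^{1/\az}+|x-y|}{t^{1/\az}}\r)^{-n-\az}D_{\sz}(x,t)\,D_{\sz}(y,t),
$$
which is the asserted inequality, with $C_{(k)}$ depending only on $k$ (through $k!$ and $\eta^{-k}$), $n$, $\az$ and $a$.

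Thus the whole argument hinges on the complex-time heat kernel bound of the second paragraph; once that is available, comparing the $D_{\sz}$ and distance factors over $[(1-\eta)t,(1+\eta)t]$, differentiating under the integral, and bookkeeping the $k$-dependence are all elementary, and the hypothesis $x,y\ne0$ is simply inherited from the $k=0$ bound, which already breaks down only at the pole of $V$.
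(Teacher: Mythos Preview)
The paper does not supply its own proof of this proposition; it is quoted from \cite[Proposition~3.5]{bd21} and used as a black box. Your outline---analytic extension of the semigroup to a sector, a complex-time kernel bound of the same shape as the real-time one, and Cauchy's integral formula on a circle of radius $\eta t$ to trade $k$ time-derivatives for a factor $t^{-k}$---is the standard route and is essentially how the result is obtained in \cite{bd21}. You are right that the only non-routine step is the complex-time bound with the $D_{\sigma}$ weights: abstract analyticity of a self-adjoint semigroup gives holomorphy of $z\mapsto e^{-zL_\alpha}$ but not, by itself, the pointwise kernel control; in \cite{bd21} this is handled by first establishing the sharp real-time two-sided estimate (via the Hardy inequality and a 3G-type comparison with the free kernel) and then pushing it into a subsector by a Davies--Phragm\'en--Lindel\"of argument. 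Your alternative Duhamel sketch would also work in principle but is heavier. Once the sector bound is granted, your observations that $|z|\sim t$ on the Cauchy circle forces $D_{\sigma}(\cdot,|z|)\sim D_{\sigma}(\cdot,t)$ and $(|z|^{1/\alpha}+|x-y|)/|z|^{1/\alpha}\sim(t^{1/\alpha}+|x-y|)/t^{1/\alpha}$ are correct, and the bookkeeping producing $C_{(k)}\sim k!\,\eta^{-k}$ is fine.
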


For $f \in L^{2}(\mathbb{R}^{n})$ and $x \in \mathbb{R}^{n}$, we define the area square function by
$$
S_{L_{\alpha}}(f)(x):=\left(\int_{0}^{\infty} \int_{B(x, t)}\left|t^{\alpha} L_{\alpha} e^{-t^{\alpha} L_{\alpha}}
f(y)\right|^{2}\,\frac{d y d t}{t^{n+1}}\right)^{\frac{1}{2}}.
$$

The Hardy space $H_{L_\alpha}^p(\mathbb{R}^{n})$ and BMO space associated with the operator $L_\alpha$ are
defined as follows; see, for instance, \cite{bn22}.

\begin{definition}[{\cite[Definition 3.1]{bn22}}]\label{d2.4}
For $p \in(0,1]$, we set
$$
\mathbb{H}_{S_{L_{\alpha}}}^{p}(\mathbb{R}^{n}):=\left\{f \in L^{2}(\mathbb{R}^{n}): \ S_{L_{\alpha}}(f)
\in L^{p}(\mathbb{R}^{n})\right\} .
$$
We then defined the Hardy space $H_{S_{L_{\alpha}}}^{p}(\mathbb{R}^{n})$ to be the completion of the set
$\mathbb{H}_{S_{L_{\alpha}}}^{p}(\mathbb{R}^{n})$ under the quasi-norm
$$\|f\|_{H_{S_{L_{\alpha}}}^{p}(\rn)}:=\left\|S_{L_{\alpha}}(f)\right\|_{p} .$$
\end{definition}

\begin{definition}[{\cite[Definition 3.2]{bn22}}]\label{d2.5}
For $p\in(0,1]$, $\epsilon\in(0,\infty)$ and $M\in\mathbb{N}$, we define $m\in L^{2}(\mathbb{R}^{n})$ to
be a $(p, 2, M, \epsilon)_{L_{\alpha}}$-molecule associated to $L_{\alpha}$, if there exist a function
$b \in D\left(L_\alpha^{M}\right)$ and a ball $B=B\left(x_{B}, r_{B}\right)$ such that
\begin{itemize}
\item [{\rm (i)}]$m=L_{\alpha}^{M} b$.
		
\item [{\rm (ii)}]For every $k=0,1, \ldots, M$ and $j \in \mathbb{N}$, one has
$$
\left\|\left(r_{B}^{\alpha} L_{\alpha}\right)^{k} b\right\|_{L^{2}\left(S_{j}(B)\right)}
\leq 2^{-j \epsilon} r_{B}^{\alpha M}\left|2^{j} B\right|^{\frac{1}{2}-\frac{1}{p}} .
$$
\end{itemize}
\end{definition}

\begin{definition}[{\cite[Definition 3.3]{bn22}}]\label{d2.6}
Given $p\in(0,1]$, $\epsilon\in(0,\infty)$ and $M\in\mathbb{N}$, we say that $f=\sum_{i} \lambda_{i} m_{i}$
is a molecular $(p, 2, M, \epsilon)_{L_{\alpha}}$-representation of $f$ if
$(\sum_{j=1}^{\infty}|\lambda_{i}|^{p})^{\frac{1}{p}}<\infty$, each $m_{j}$ is a $(p, 2, M, \epsilon)$-molecule,
and the sum converges in $L^{2}(\mathbb{R}^{n})$. Set
$$
\mathbb{H}_{L_{\alpha}, \text { mol }, M, \epsilon}^{p}(\mathbb{R}^{n}):=\left\{f \in L^{2}(\mathbb{R}^{n}):\
f \text { has a }(p, 2, M, \epsilon)_{L_{\alpha}} \text {-representation}\right\}
$$
for $p\in(0,1]$, with the quasi-norm given by
$$
\begin{aligned}
&\|f\|_{H_{L_{\alpha}, \mathrm{mol}, M, \epsilon}^{p}(\rn)} \\
&:=\inf \left\{\left(\sum_{i=1}^{\infty}\left|\lambda_{i}\right|^{p}\right)^{\frac{1}{p}}:\
f=\sum_{i=0}^{\infty} \lambda_{i} m_{i} \text { is a }(p, 2, M, \epsilon)_{L_{\alpha}}\text
{-representation}\right\} .
\end{aligned}
$$
For $p\in(0,1]$, the space $H_{L_{\alpha}, \text { mol }, M, \epsilon}^{p}(\mathbb{R}^{n})$ is defined to be
the completion of $\mathbb{H}_{L_{\alpha}, \mathrm{mol}, M, \epsilon}^{p}(\mathbb{R}^{n})$ with respect to
the quasi-norm $\|\cdot\|_{H_{L_{\alpha}, \mathrm{mol}, M, \epsilon}^{p}(\rn)}$.
\end{definition}

\begin{theorem}[{\cite[Theorem 3.4]{bn22}}]\label{t2.7}
Let $p\in(\frac{n}{n+\alpha},1]$, $\epsilon=\alpha+n-\frac{n}{p}$, and $M\in(\frac{n}{\alpha p},\infty)\cap\mathbb{N}$.
Then the Hardy spaces $H_{L_{\alpha}, \operatorname{mol}, M, \epsilon}^{p}(\mathbb{R}^{n})$ and
$H_{S_{L_{\alpha}}}^{p}(\mathbb{R}^{n})$ coincide with equivalent quasi-norms.
\end{theorem}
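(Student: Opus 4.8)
The plan is to prove the two continuous inclusions $H_{L_{\alpha},\,\mathrm{mol},\,M,\,\epsilon}^{p}(\rn)\hookrightarrow H_{S_{L_{\alpha}}}^{p}(\rn)$ and $H_{S_{L_{\alpha}}}^{p}(\rn)\hookrightarrow H_{L_{\alpha},\,\mathrm{mol},\,M,\,\epsilon}^{p}(\rn)$, each with control of the relevant quasi-norms, and then to extend both by density to the completions of Definitions~\ref{d2.4} and \ref{d2.6}. The only substitutes for Gaussian or Poisson heat-kernel bounds available here are the $D_{\sigma}$-weighted pointwise estimate of Proposition~\ref{p2.3} and the off-diagonal $L^{p}$--$L^{q}$ bounds of Theorem~\ref{t2.2}, the latter valid only for $n_{\sigma}'<p\le q<n_{\sigma}$; I note this range always contains the exponent $2$, since $\sigma\le(n-\alpha)/2<n/2$ forces $n_{\sigma}>2$ and $n_{\sigma}'<2$. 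These estimates, together with the quadratic ($L^{2}$-functional-calculus) bounds for the non-negative self-adjoint operator $L_{\alpha}$ and the Coifman--Meyer--Stein tent-space theory, are the engine of both inclusions.

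For the first inclusion, since $p\le1$ and $S_{L_{\alpha}}$ is sublinear, it suffices to prove the uniform estimate $\|S_{L_{\alpha}}(m)\|_{L^{p}(\rn)}\ls1$ for an arbitrary $(p,2,M,\epsilon)_{L_{\alpha}}$-molecule $m=L_{\alpha}^{M}b$ with defining ball $B=B(x_{B},r_{B})$; the $\ell^{p}$-subadditivity of $\|\cdot\|_{L^{p}(\rn)}^{p}$ then yields the estimate on the completion. First I would record that Definition~\ref{d2.5}(ii) with $k=M$ gives $\|m\|_{L^{2}(S_{j}(B))}\le 2^{-j\epsilon}|2^{j}B|^{1/2-1/p}$ and hence, summing in $j$ and using $\epsilon=\alpha+n-n/p$, that $\|m\|_{L^{2}(\rn)}\ls|B|^{1/2-1/p}$. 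Over $4B$ I would bound $\int_{4B}[S_{L_{\alpha}}(m)]^{p}$ by Hölder's inequality and the $L^{2}$-boundedness of $S_{L_{\alpha}}$, obtaining $\ls|B|^{1-p/2}\|m\|_{L^{2}(\rn)}^{p}\ls1$. Over $S_{j}(B)$ with $j\ge2$ I would split the defining $t$-integral of $S_{L_{\alpha}}(m)(x)$ at $t\sim 2^{j}r_{B}$, use on both pieces the identity $t^{\alpha}L_{\alpha}e^{-t^{\alpha}L_{\alpha}}m=t^{-\alpha M}(t^{\alpha}L_{\alpha})^{M+1}e^{-t^{\alpha}L_{\alpha}}b$ and the annular decomposition $b=\sum_{i\ge0}b\chi_{S_{i}(B)}$, apply Theorem~\ref{t2.2} to $(t^{\alpha}L_{\alpha})^{M+1}e^{-t^{\alpha}L_{\alpha}}$ (whose kernel satisfies that theorem's hypothesis by Proposition~\ref{p2.3}) together with the size bounds of Definition~\ref{d2.5}(ii), and integrate in $t$. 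The resulting double series in $(i,j)$ together with the $t$-integral converge precisely because the spatial decay of order $n+\alpha$ dominates the volume growth $|2^{j}B|^{1/p-1/2}$ — this is where $p>\tfrac{n}{n+\alpha}$, equivalently $\epsilon>0$, is used — and because the factor $t^{-\alpha M}$ renders the large-$t$ part summable exactly when $M>\tfrac{n}{\alpha p}$.

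For the reverse inclusion I would use the tent-space factorization. By Definition~\ref{d2.4}, the map $f\mapsto(Q_{t}f)_{t>0}$, with $Q_{t}:=t^{\alpha}L_{\alpha}e^{-t^{\alpha}L_{\alpha}}$, sends $\mathbb{H}_{S_{L_{\alpha}}}^{p}(\rn)$ into the tent space $T_{2}^{p}(\rr_{+}^{n+1})$ with $\|(Q_{t}f)\|_{T_{2}^{p}}=\|f\|_{H_{S_{L_{\alpha}}}^{p}(\rn)}$, so the Coifman--Meyer--Stein atomic decomposition gives $Q_{t}f=\sum_{i}\lambda_{i}A_{i}$ with each $A_{i}$ a $T_{2}^{p}$-atom supported in the tent $\widehat{B_{i}}$ over a ball $B_{i}$ and $(\sum_{i}|\lambda_{i}|^{p})^{1/p}\ls\|f\|_{H_{S_{L_{\alpha}}}^{p}(\rn)}$. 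Next I would invoke the Calderón reproducing formula $f=c_{M}\int_{0}^{\fz}(t^{\alpha}L_{\alpha})^{M+1}e^{-t^{\alpha}L_{\alpha}}(Q_{t}f)\,\frac{dt}{t}$ in $L^{2}(\rn)$, which follows from the normalization $c_{M}\int_{0}^{\fz}(t^{\alpha}\lambda)^{M+2}e^{-2t^{\alpha}\lambda}\,\frac{dt}{t}=1$ for all $\lambda\in(0,\fz)$ and the $L^{2}$-functional calculus, and define $\pi_{M,L_{\alpha}}(F):=c_{M}\int_{0}^{\fz}(t^{\alpha}L_{\alpha})^{M+1}e^{-t^{\alpha}L_{\alpha}}(F(\cdot,t))\,\frac{dt}{t}$, so that $f=\pi_{M,L_{\alpha}}(Q_{t}f)=\sum_{i}\lambda_{i}\,\pi_{M,L_{\alpha}}(A_{i})$. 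The crux is to verify that if $A$ is a $T_{2}^{p}$-atom supported in the tent over a ball $B$, then $\pi_{M,L_{\alpha}}(A)$ is, up to a fixed multiplicative constant, a $(p,2,M,\epsilon)_{L_{\alpha}}$-molecule with defining ball $B$: writing $\pi_{M,L_{\alpha}}(A)=L_{\alpha}^{M}b$ with $b:=c_{M}\int_{0}^{r_{B}}t^{\alpha(M+1)}L_{\alpha}e^{-t^{\alpha}L_{\alpha}}(A(\cdot,t))\,\frac{dt}{t}$, one computes $(r_{B}^{\alpha}L_{\alpha})^{k}b=c_{M}r_{B}^{\alpha k}\int_{0}^{r_{B}}t^{\alpha(M-k)}(t^{\alpha}L_{\alpha})^{k+1}e^{-t^{\alpha}L_{\alpha}}(A(\cdot,t))\,\frac{dt}{t}$ for $k=0,1,\dots,M$, and then the annular bound of Definition~\ref{d2.5}(ii) follows from Minkowski's inequality, Theorem~\ref{t2.2} applied to $(t^{\alpha}L_{\alpha})^{k+1}e^{-t^{\alpha}L_{\alpha}}$ acting on $A(\cdot,t)$ (which is supported in $B$), the Cauchy--Schwarz inequality in $t$, and the tent-atom normalization $\int_{\widehat{B}}|A(y,t)|^{2}\,\frac{dy\,dt}{t}\le|B|^{1-2/p}$.

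I expect the main obstacle to be exactly this last step, namely that $\pi_{M,L_{\alpha}}$ maps tent atoms to molecules, since there the weighted and exponent-restricted nature of the available heat-kernel estimates bites hardest: one must absorb the factors $D_{\sigma}(x,t)D_{\sigma}(y,t)$ (which is precisely what forces the range $n_{\sigma}'<p\le q<n_{\sigma}$ in Theorem~\ref{t2.2}), combine the polynomial spatial decay with the atomic $L^{2}$-normalization, and then sum an iterated $t$-integral over $(0,r_{B})$ against a geometric series in the annulus index — the convergence of which again singles out the roles of $\epsilon=\alpha+n-\tfrac np>0$ and $M>\tfrac n{\alpha p}$. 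A secondary technical point is to justify the interchange $\pi_{M,L_{\alpha}}(\sum_{i}\lambda_{i}A_{i})=\sum_{i}\lambda_{i}\pi_{M,L_{\alpha}}(A_{i})$ and the $L^{2}$-convergence of the reproducing formula; both become routine once the uniform molecular estimate is established.
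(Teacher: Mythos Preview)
The paper does not give its own proof of this statement: Theorem~\ref{t2.7} is simply quoted from \cite[Theorem~3.4]{bn22} without argument, so there is nothing in the present paper to compare your proposal against directly.

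That said, your outline is correct and is precisely the strategy carried out in \cite{bn22}: one inclusion via a uniform bound $\|S_{L_{\alpha}}(m)\|_{L^{p}(\rn)}\ls1$ for each molecule, obtained by splitting into the near region $4B$ (H\"older plus $L^{2}$-boundedness of $S_{L_{\alpha}}$) and the annuli $S_{j}(B)$ (off-diagonal estimates from Theorem~\ref{t2.2} and Proposition~\ref{p2.3}, together with the identity $t^{\alpha}L_{\alpha}e^{-t^{\alpha}L_{\alpha}}m=t^{-\alpha M}(t^{\alpha}L_{\alpha})^{M+1}e^{-t^{\alpha}L_{\alpha}}b$); and the reverse inclusion via the tent-space atomic decomposition of $Q_{t}f$, the $L^{2}$-Calder\'on reproducing formula, and the fact that the projection $\pi$ maps $T_{2}^{p}$-atoms to $(p,2,M,\epsilon)_{L_{\alpha}}$-molecules. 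Your identification of the ``crux'' step is accurate; indeed the present paper proves a close variant of exactly that step as Lemma~\ref{l2.21} (with the operator $(t^{\alpha}L_{\alpha})^{M}e^{-t^{\alpha}L_{\alpha}}(I-e^{-t^{\alpha}L_{\alpha}})^{2M_{1}+1}$ in place of your $(t^{\alpha}L_{\alpha})^{M+1}e^{-t^{\alpha}L_{\alpha}}$), using the duality-and-off-diagonal argument you sketch. Your remarks on where the hypotheses $p>\tfrac{n}{n+\alpha}$ (i.e.\ $\epsilon>0$) and $M>\tfrac{n}{\alpha p}$ enter are also on target.
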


\begin{remark}[{\cite[Remark 3.9]{bn22}}]\label{r2.8}
Due to the coincidence between $H_{L_{\alpha}, \text { mol }, M, \epsilon}^{p}(\mathbb{R}^{n})$ and
$H_{S_{L_{\alpha}}}^{p}(\mathbb{R}^{n})$, in the sequel we will write $H_{L_{\alpha}}^{p}(\mathbb{R}^{n})$
for either $H_{L_{\alpha}, \text { mol }, M, \epsilon}^{p}(\mathbb{R}^{n})$ or
$H_{S_{L_{\alpha}}}^{p}(\mathbb{R}^{n})$ with $p\in(\frac{n}{n+\alpha},1]$, $\epsilon=\alpha+n-\frac{n}{p}$,
and $M\in(\frac{n}{\alpha p},\infty)\cap\mathbb{N}$.
\end{remark}

In \cite{bn22}, it is proved that the dual of the Hardy space $H_{L_{\alpha}}^{p}(\mathbb{R}^{n})$ is the
following BMO space.
\begin{definition}[{\cite[Definition 3.10]{bn22}}]\label{d2.9}
Let $f \in L^{2}_\loc(\mathbb{R}^{n})$ and $\beta\in(0,\infty)$. Then $f$ is said to be of type
$(L_{\alpha}, \beta)$ if	
$$
\int_{\mathbb{R}^{n}} \frac{|f(x)|^{2}}{(1+|x|)^{n+\beta}}\left(1+\frac{1}{|x|}\right)^{\sigma}\,dx<\infty .
$$	
We denote the set of all functions of type $\left(L_{\alpha},\beta\right)$ by $M_{\beta}$.
	
For $f \in M_{\beta}$, we define
$$
\|f\|_{M_{\beta}}:=\left[\int_{\mathbb{R}^{n}} \frac{|f(x)|^{2}}{(1+|x|)^{n+\beta}}
\left(1+\frac{1}{|x|}\right)^{\sigma}\,dx\right]^{\frac{1}{2}} .
$$
It is easy to see that $M_{\beta}$ is a Banach space and $M_{\beta} \subset M_{\beta^{\prime}}$
for $\beta<\beta^{\prime}$.
\end{definition}

\begin{definition}[{\cite[Definition 3.12]{bn22}}]\label{d2.10}
Let $M \in \mathbb{N}$ and $\gamma\in[0,\frac{\alpha}{n})$. We say that $f \in M_{\alpha}$ is in
$\text{BMO}_{L_{\alpha}, M}^{\gamma}(\mathbb{R}^{n})$, if
$$
\|f\|_{\text{BMO}_{L_{\alpha}, M}^{\gamma}(\rn)}:=\sup _{B \in \mathbb{R}^{n}}\left(\frac{1}{|B|^{1+2 \gamma}}
\int_{B}\left|\left(I-e^{-r_{B}^{\alpha} L_{\alpha}}\right)^{M+1} f(x)\right|^{2} d x\right)^{\frac{1}{2}}<\infty,
$$	
where the supremum is taken over all balls $B=B\left(x_{B}, r_{B}\right)$ of $\mathbb{R}^{n}$.
\end{definition}

\begin{theorem}[{\cite[Theorem 3.15]{bn22}}]\label{t2.11}
For any $p \in(\frac{n}{n+\alpha}, 1]$ and $M\in\mathbb{N}$ with
$$M>\max\lf\{\frac{n+2 \alpha}{2 \alpha}, \frac{n}{p \alpha}\r\},$$
the dual space of $H_{L_{\alpha}}^{p}(\mathbb{R}^{n})$ is the space $\mathrm{BMO}_{L_{\alpha}, M}^{\frac{1}{p}-1}
(\mathbb{R}^{n})$ in the following sense.
\begin{itemize}
\item [{\rm (a)}]Suppose $f \in \mathrm{BMO}_{L_{\alpha}, M}^{\frac{1}{p}-1}(\mathbb{R}^{n})$,
then the linear functional given by
\begin{equation}\label{2.2}
\ell(g)=\int_\rn f(x) g(x)\, dx,
\end{equation}
initially defined on $H_{L_{\alpha}}^{p}(\mathbb{R}^{n})\cap L^{2}(\mathbb{R}^{n})$, has a unique bounded extension
to $H_{L_{\alpha}}^{p}(\mathbb{R}^{n})$.
		
\item [{\rm (b)}] Conversely, for every bounded linear functional $\ell$ on $H_{L_{\alpha}}^{p}(\mathbb{R}^{n})$
can be realized as in \eqref{2.2}, that is, there exists $f \in \mathrm{BMO}_{L_{\alpha}, M}^{\frac{1}{p}-1}(\mathbb{R}^{n})$
such that \eqref{2.2} holds and
$$
\|f\|_{\mathrm{BMO}_{L_\alpha,M}^{\frac{1}{p}-1}(\rn)}\le c|\ell|_{H_{L_\alpha}^p(\rn)}
$$
for some positive constant $c$ independent of $\ell$.
\end{itemize}
\end{theorem}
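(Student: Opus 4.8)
The plan is to establish the two assertions (a) and (b) separately; together they give the stated identification. Throughout I set $\gamma:=\frac1p-1\in[0,\frac{\alpha}{n})$ and keep $M$ as in the hypothesis (so $M$ exceeds the threshold $\frac{n}{\alpha p}$ appearing in Remark~\ref{r2.8}), and I recall that by Theorem~\ref{t2.7} and Remark~\ref{r2.8} the space $H_{L_\alpha}^p(\rn)$ is described through molecular $(p,2,\widetilde M,\epsilon)_{L_\alpha}$-representations for any integer $\widetilde M>\frac{n}{\alpha p}$, with $\epsilon=\alpha+n-\frac{n}{p}$. For part (a), since $H_{L_\alpha}^p(\rn)\cap L^2(\rn)$ is dense in $H_{L_\alpha}^p(\rn)$ and every $g$ in it has such a representation $g=\sum_i\lambda_i m_i$ converging in $L^2(\rn)$ with $(\sum_i|\lambda_i|^p)^{1/p}\lesssim\|g\|_{H_{L_\alpha}^p(\rn)}$, and since $p\le 1$, it suffices to prove
$$\left|\int_{\rn}fm\right|\lesssim\|f\|_{\mathrm{BMO}_{L_\alpha,M}^{\gamma}(\rn)}$$
for a single $(p,2,M+1,\epsilon)_{L_\alpha}$-molecule $m=L_\alpha^{M+1}b$ associated with a ball $B=B(x_B,r_B)$; the absolute convergence of this integral and the interchange with $\sum_i$ follow from $f\in M_\alpha$ together with the decay $\|m\|_{L^2(S_j(B))}\le 2^{-j\epsilon}|2^jB|^{\frac12-\frac1p}$ read off from Definition~\ref{d2.5}.

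The heart of part (a) is the identity obtained by inserting $I=(I-e^{-r_B^\alpha L_\alpha})^{M+1}+\sum_{k=1}^{M+1}\binom{M+1}{k}(-1)^{k+1}e^{-kr_B^\alpha L_\alpha}$ and transferring the self-adjoint operator $(I-e^{-r_B^\alpha L_\alpha})^{M+1}$ onto $f$ (legitimate after truncating $f$ and passing to the limit via the kernel bounds and Lemma~\ref{l2.1}):
$$\int_{\rn}fm=\int_{\rn}\left(I-e^{-r_B^\alpha L_\alpha}\right)^{M+1}\!f\cdot m+\sum_{k=1}^{M+1}\binom{M+1}{k}\frac{(-1)^{k+1}}{(kr_B^\alpha)^{M+1}}\int_{\rn}f\cdot\psi_{M+1}(kr_B^\alpha L_\alpha)b,$$
where $\psi_{M+1}(\lambda):=\lambda^{M+1}e^{-\lambda}$. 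For the first term I would split over the annuli $S_j(B)$, apply the Cauchy--Schwarz inequality and the molecular decay, and use a dilation estimate of the form
$$\left\|\left(I-e^{-r_B^\alpha L_\alpha}\right)^{M+1}\!f\right\|_{L^2(S_j(B))}\lesssim(1+j)^{c_{(M)}}|2^jB|^{\frac12+\gamma}\,\|f\|_{\mathrm{BMO}_{L_\alpha,M}^{\gamma}(\rn)},$$
proved by comparing $(I-e^{-r_B^\alpha L_\alpha})^{M+1}$ with $(I-e^{-(2^jr_B)^\alpha L_\alpha})^{M+1}$ — the difference being a finite sum of terms each carrying a factor $\int_{r_B^\alpha}^{(2^jr_B)^\alpha}sL_\alpha e^{-sL_\alpha}\frac{ds}{s}$ — and estimating the pieces through Proposition~\ref{p2.3}, Theorem~\ref{t2.2} and Lemma~\ref{l2.1}, the far part of $f$ being absorbed via the polynomial off-diagonal decay; since $\gamma=\frac1p-1$ cancels the power of $|2^jB|$, the sum over $j$ is a convergent series (the polynomial factor being beaten by $2^{-j\epsilon}$). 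For the correction terms I would transfer $\psi_{M+1}(kr_B^\alpha L_\alpha)$ onto $f$ and expand it, through a Calder\'on-type identity, as a superposition $\int_0^\infty\eta(kr_B^\alpha/\tau)\left(I-e^{-\tau L_\alpha}\right)^{M+1}\frac{d\tau}{\tau}$ with $\eta$ rapidly decaying, so that the same dilation estimate applies, while the normalization $r_B^{\alpha(M+1)}|2^jB|^{\frac12-\frac1p}$ of $b$ closes the bound. By density the functional $\ell$ of \eqref{2.2} then extends boundedly to $H_{L_\alpha}^p(\rn)$ with $|\ell|\lesssim\|f\|_{\mathrm{BMO}_{L_\alpha,M}^{\gamma}(\rn)}$, which is part (a).

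For part (b) I would argue via tent spaces. By Definition~\ref{d2.4} the map $\iota_{L_\alpha}\colon f\mapsto\left(t^\alpha L_\alpha e^{-t^\alpha L_\alpha}f\right)(x,t)$ embeds $H_{L_\alpha}^p(\rn)\cap L^2(\rn)$ isometrically (up to normalization, $S_{L_\alpha}$ being the tent-space square function) into the tent space $T^p(\rn\times(0,\infty))$. Given $\ell\in(H_{L_\alpha}^p(\rn))^*$, the functional $\ell\circ\iota_{L_\alpha}^{-1}$ on the image extends by the Hahn--Banach theorem to all of $T^p(\rn\times(0,\infty))$, and by the tent-space duality $(T^p)^*=T^\infty_{\gamma}$ (the Carleson--Campanato tent space of exponent $\gamma=\frac1p-1$) it is represented by some $G\in T^\infty_{\gamma}$ with $\|G\|_{T^\infty_{\gamma}}\lesssim|\ell|$. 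One then sets
$$f:=\pi_{L_\alpha,M}(G):=c_{(M)}\int_0^\infty(t^\alpha L_\alpha)^{M+1}e^{-t^\alpha L_\alpha}\left(G(\cdot,t)\right)\frac{dt}{t},$$
and the remaining tasks are: (i) to show $f\in\mathrm{BMO}_{L_\alpha,M}^{\gamma}(\rn)$ with $\|f\|_{\mathrm{BMO}_{L_\alpha,M}^{\gamma}(\rn)}\lesssim\|G\|_{T^\infty_{\gamma}}$, which follows from the boundedness of $\pi_{L_\alpha,M}$ from $T^\infty_{\gamma}$ into $\mathrm{BMO}_{L_\alpha,M}^{\gamma}(\rn)$ — a consequence of the Carleson-measure characterization of the latter space established in Section~\ref{s2}, together with Proposition~\ref{p2.3} and Lemma~\ref{l2.1}; and (ii) to verify the reproducing identity $\ell(h)=\int_{\rn}fh$ for $h\in H_{L_\alpha}^p(\rn)\cap L^2(\rn)$, which comes from the Calder\'on reproducing formula $c_{(M)}\int_0^\infty(t^\alpha L_\alpha)^{M+1}e^{-t^\alpha L_\alpha}\,t^\alpha L_\alpha e^{-t^\alpha L_\alpha}\frac{dt}{t}=I$ on $L^2(\rn)$ ($L_\alpha$ being injective there) and Fubini's theorem. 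Combining (a) and (b) proves the theorem.

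I expect the main obstacle to be the dilation estimate in part (a) and, dually, the mapping property $\pi_{L_\alpha,M}\colon T^\infty_{\gamma}\to\mathrm{BMO}_{L_\alpha,M}^{\gamma}(\rn)$ — equivalently, the Carleson-measure characterization of $\mathrm{BMO}_{L_\alpha,M}^{\gamma}(\rn)$. In the classical theory both rely on Gaussian (or at least Davies--Gaffney) bounds for $e^{-t^\alpha L_\alpha}$, which fail here, so every off-diagonal argument must instead be routed through the weighted polynomial bounds of Proposition~\ref{p2.3}, the $L^p$--$L^q$ estimates of Theorem~\ref{t2.2} (valid only for $n_\sigma'<p\le q<n_\sigma$, so that all auxiliary exponents must be kept inside this window), and the volume estimates for $D_\sigma$ in Lemma~\ref{l2.1}. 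Tracking the extra growth factors produced by the weights $D_\sigma(\cdot,t)$, and checking that they remain summable against the molecular (respectively Carleson) decay, is the delicate bookkeeping at the core of the argument.
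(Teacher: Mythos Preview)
This theorem is not proved in the present paper: it is quoted verbatim as \cite[Theorem 3.15]{bn22} and stated without proof, serving only as a tool for the subsequent VMO analysis. There is therefore no ``paper's own proof'' to compare your proposal against.

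That said, your outline follows the standard architecture for such duality results and is broadly in line with how \cite{bn22} proceeds. A few cautions if you intend to flesh it out. In part~(a), your ``dilation estimate'' comparing $(I-e^{-r_B^\alpha L_\alpha})^{M+1}f$ on $S_j(B)$ with the BMO norm is exactly the content of \cite[Lemma~3.13]{bn22}, and your idea of telescoping $r_B\mapsto 2^jr_B$ is the right one, but the Calder\'on-type expansion you propose for the correction terms $\psi_{M+1}(kr_B^\alpha L_\alpha)$ is unnecessary: one can treat those terms directly by writing $e^{-kr_B^\alpha L_\alpha}m=L_\alpha^{M+1}e^{-kr_B^\alpha L_\alpha}b$ and using the kernel bound of Proposition~\ref{p2.3} together with the molecular control on $b$. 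In part~(b), your tent-space route via Hahn--Banach and $(T_2^p)^*=T_2^{p,\infty}$ is correct; the map you call $\pi_{L_\alpha,M}$ is the operator denoted $\pi_{L_\alpha,M_0}$ in the present paper, and its boundedness from $T_2^{p,\infty}$ into $\mathrm{BMO}_{L_\alpha,M}^{\gamma}$ is indeed the key analytic step (established in \cite{bn22} via the Carleson-measure characterization, cf.\ Lemma~\ref{l2.15} here). Your closing remark about having to route all off-diagonal estimates through Proposition~\ref{p2.3}, Theorem~\ref{t2.2} and Lemma~\ref{l2.1} rather than Davies--Gaffney bounds is exactly the point of \cite{bn22}.
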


Now, we recall the definition and some basic properties of the tent spaces introduced by Coifman et al. in [8].
We mention that the tent spaces is a useful tool to study the function spaces in harmonic analysis such as
Hardy spaces, BMO spaces, and VMO spaces, etc.
	
For $x \in \mathbb{R}^{n}$ and $\beta\in(0,\infty)$, we denote
$$
\Gamma^{\beta}(x):=\left\{(y, t) \in \mathbb{R}^{n} \times(0, \infty):|x-y| \leq \beta t\right\}.
$$
When $\beta=1$, we briefly write $\Gamma(x)$ instead of $\Gamma^{1}(x)$.
	
For any closed subset $F \subset \mathbb{R}^{n}$, define
$$
R^{\beta}(F):=\cup_{x \in F} \Gamma^{\beta}(x),
$$
and we also denote $R^{1}(F)$ by $R(F)$.
	
If $O$ is an open set in $\mathbb{R}^{n}$, then the tent over $\widehat{O}$ is defined as
$$\widehat{O}:=\left(R\left(O^{c}\right)\right)^{c}.$$
Let $\mathbb{R}_{+}^{n+1}:=\left\{(y, t) \in \mathbb{R}^{n+1}:\ t\in(0,\infty)\right\}$. For a measurable
function $F$ defined in $\mathbb{R}_{+}^{n+1}$, we define
$$
A(F)(x) :=\left(\int_{\Gamma(x)}|F(y, t)|^{2} \frac{d y d t}{t^{n+1}}\right)^{\frac{1}{2}},
$$
$$
C(F)(x) :=\sup _{x \in B}\left(\frac{1}{|B|} \int_{\widehat{B}}|F(y, t)|^{2} \frac{d y d t}{t}\right)^{\frac{1}{2}},
$$
and
$$
C_{p}(F)(x):=\sup _{x \in B} \frac{1}{|B|^{\frac{1}{p}-\frac{1}{2}}}\left(\int_{\widehat{B}}|F(y, t)|^{2}\,
\frac{d y d t}{t}\right)^{\frac{1}{2}} \quad \text { for } p\in(0,1].
$$
	
\begin{definition}[{\cite[Definition 2.4]{bn22}}]\label{d2.12}
The tent spaces are defined as follows. For $p\in(0,\infty)$, we define $T_{2}^{p}(\mathbb{R}_{+}^{n+1}):
=\{F:\ A(F) \in L^{p}(\mathbb{R}^{n})\}$ with the quasi-norm
$\|F\|_{T_{2}^{p}}:=\|A(F)\|_{p}$ which is a Banach space for $ p\in[1,\infty)$.
		
For $p=\infty$, we define $T_{2}^{\infty}(\mathbb{R}_{+}^{n+1}):=\{F:\ C(F) \in L^{\infty}(\mathbb{R}^{n})
\}$ with the norm $\|F\|_{T_{2}^{\infty}}:=\|C(F)\|_{\infty}$ which is a Banach space.
		
For $p\in(0,1]$, we define $T_{2}^{p, \infty}(\mathbb{R}_{+}^{n+1}):=\{F:\ \|C_{p}(F)\|_{\infty}<\infty\}$
with the norm $\|F\|_{T_{2}^{p, \infty}}:=\|C_{p} F\|_{\infty}$. Obviously,
$T_{2}^{1, \infty}(\mathbb{R}_{+}^{n+1})=T_{2}^{\infty}(\mathbb{R}_{+}^{n+1})$.
\end{definition}
	
\begin{proposition}[{\cite[Proposition 2.7]{bn22}}]\label{p2.13}
\begin{itemize}
\item [{\rm (i)}]The following inequality holds, whenever $f \in T_{2}^{1}(\mathbb{R}_{+}^{n+1})$
and $g \in T_{2}^{\infty}(\mathbb{R}_{+}^{n+1})$ :
$$
\int_{\mathbb{R}_{+}^{n+1}}|f(x, t) g(x, t)|\,\frac{d x d t}{t} \leq
C\int_{\mathbb{R}^{n}} A(f)(x) C(g)(x)\,d x .
$$
    		
\item [{\rm (ii)}]Suppose $p\in (1,\infty)$, $f \in T_{2}^{p}(\mathbb{R}_{+}^{n+1})$, and
$g \in T_{2}^{p^{\prime}}(\mathbb{R}_{+}^{n+1})$ with $\frac{1}{p}+\frac{1}{p^{\prime}}=1$, then
$$
\int_{\mathbb{R}_{+}^{n+1}}|f(y, t) g(y, t)|\,\frac{d y d t}{t} \leq
\int_{\mathbb{R}^{n}} A(f)(x) A(g)(x)\,d x .
$$
\item [{\rm (iii)}]If $p \in(0,1]$, then the dual space of $T_{2}^{p}(\mathbb{R}_{+}^{n+1})$ is
$T_{2}^{p, \infty}(\mathbb{R}_{+}^{n+1})$. More precisely, the pairing
$$
\langle f, g\rangle=\int_{\mathbb{R}_{+}^{n+1}} f(x, t) g(x, t)\,\frac{d x d t}{t}
$$
realizes $T_{2}^{p, \infty}(\mathbb{R}_{+}^{n+1})$ as the dual of $T_{2}^{p}(\mathbb{R}_{+}^{n+1})$.
\end{itemize}
\end{proposition}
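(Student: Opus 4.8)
The proposition gathers the classical properties of Coifman--Meyer--Stein tent spaces, recalled here from \cite{bn22}, so the plan is to reproduce the original arguments; I would establish (ii) first, then set up the atomic decomposition of $T_2^p$, and deduce (iii) and (i) from it. Item (ii) is elementary: since $\int_\rn\chi_{\Gamma(x)}(y,t)\,dx=c_nt^n$, Fubini's theorem gives
$$\int_{\mathbb{R}_+^{n+1}}|f(y,t)g(y,t)|\,\frac{dy\,dt}{t}=\frac1{c_n}\int_\rn\lf(\int_{\Gamma(x)}|f(y,t)g(y,t)|\,\frac{dy\,dt}{t^{n+1}}\r)dx,$$
and Cauchy--Schwarz in the variables $(y,t)$ bounds the inner integral by $A(f)(x)A(g)(x)$; with the aperture of $\Gamma$ normalised so that $c_n=1$ this is exactly the stated inequality, and H\"older's inequality on $\rn$ with exponents $p,p'$ then yields the companion bound $\|f\|_{T_2^p}\|g\|_{T_2^{p'}}$.

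The engine for (i) and (iii) is the atomic decomposition of $T_2^p$ for $p\in(0,1]$. Call $a$ a $T_2^p$-atom if it is supported in a tent $\widehat B$ over a ball $B$ with $\int_{\widehat B}|a|^2\,\frac{dy\,dt}{t}\le|B|^{1-\frac2p}$; then $A(a)$ is supported in $B$, so H\"older's inequality gives $\|a\|_{T_2^p}=\|A(a)\|_p\ls1$ uniformly. The claim is that each $f\in T_2^p$ admits $f=\sum_j\lambda_ja_j$ with $a_j$ atoms, $(\sum_j|\lambda_j|^p)^{1/p}\sim\|f\|_{T_2^p}$, the sum converging in $T_2^p$ (and in $T_2^2$ if $f\in T_2^2$). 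I would prove this, as in \cite{bn22}, by the stopping--time construction: put $O_k:=\{x\in\rn:\,A(f)(x)>2^k\}$, enlarge it to $B_k^\ast:=\{x\in\rn:\,\mathcal M(\chi_{O_k})(x)>\gamma_n\}$ with $\mathcal M$ the Hardy--Littlewood maximal operator and a fixed $\gamma_n\in(0,2^{-n})$ (so $B_k^\ast$ is open, decreasing in $k$, contains $O_k$, and $|B_k^\ast|\ls|O_k|$), split $\mathbb{R}_+^{n+1}$ into the layers $\widehat{B_k^\ast}\setminus\widehat{B_{k+1}^\ast}$ (outside of which $f=0$ a.e.), Whitney--decompose each $B_k^\ast$ into cubes $\{Q_{k,i}\}_i$, and take as atoms the normalised restrictions of $f$ to the cells $(\widehat{B_k^\ast}\setminus\widehat{B_{k+1}^\ast})\cap\widehat{Q_{k,i}}$. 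On such a layer a fixed proportion of every ball $B(y,t)$ with $(y,t)$ in the layer lies in $\{A(f)\le2^{k+1}\}$, so integrating $\chi_{\Gamma(x)}(y,t)$ in $x$ gives $\int_{\mathrm{cell}_{k,i}}|f|^2\,\frac{dy\,dt}{t}\ls2^{2k}|Q_{k,i}|$, whence $\sum_{k,i}|\lambda_{k,i}|^p\ls\sum_k2^{kp}|O_k|\sim\|A(f)\|_p^p$.

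Granting this, the duality (iii) is short. If $a$ is a $T_2^p$-atom in $\widehat B$ and $g\in T_2^{p,\fz}$, Cauchy--Schwarz on $\widehat B$ gives $|\langle a,g\rangle|\le|B|^{\frac12-\frac1p}(\int_{\widehat B}|g|^2\frac{dy\,dt}{t})^{1/2}\le\|g\|_{T_2^{p,\fz}}$; hence for $f=\sum_j\lambda_ja_j$ one has $|\langle f,g\rangle|\le\sum_j|\lambda_j||\langle a_j,g\rangle|\le\|g\|_{T_2^{p,\fz}}\sum_j|\lambda_j|\le\|g\|_{T_2^{p,\fz}}(\sum_j|\lambda_j|^p)^{1/p}\ls\|g\|_{T_2^{p,\fz}}\|f\|_{T_2^p}$, using $p\le1$ in the last step, so $T_2^{p,\fz}\subset(T_2^p)^\ast$. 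Conversely, given $\ell\in(T_2^p)^\ast$ and a ball $B$, a norm--one element of $L^2(\widehat B,\frac{dy\,dt}{t})$ equals $|B|^{\frac1p-\frac12}$ times a $T_2^p$-atom, so $\ell$ restricted to $L^2(\widehat B,\frac{dy\,dt}{t})$ has norm $\ls\|\ell\|\,|B|^{\frac1p-\frac12}$; the Riesz representation theorem furnishes $g_B\in L^2(\widehat B,\frac{dy\,dt}{t})$ representing it with $\|g_B\|_{L^2(\widehat B,\,dy\,dt/t)}\ls\|\ell\|\,|B|^{\frac1p-\frac12}$, uniqueness makes the $g_B$ compatible, and patching produces $g\in L^2_{\mathrm{loc}}(\mathbb{R}_+^{n+1},\frac{dy\,dt}{t})$ with $\frac1{|B|^{\frac1p-\frac12}}(\int_{\widehat B}|g|^2\frac{dy\,dt}{t})^{1/2}\ls\|\ell\|$ for all $B$, i.e.\ $\|g\|_{T_2^{p,\fz}}\ls\|\ell\|$; since finite sums of atoms are dense in $T_2^p$, $\ell(f)=\langle f,g\rangle$ throughout.

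Finally, (i) also follows from the atomic decomposition: for $f=\sum_j\lambda_ja_j\in T_2^1$ and $g\in T_2^\fz$, the $p=1$ case of the atom estimate above gives $\int_{\mathbb{R}_+^{n+1}}|fg|\frac{dy\,dt}{t}\le(\sum_j|\lambda_j|)\|C(g)\|_\fz\ls\|f\|_{T_2^1}\|g\|_{T_2^\fz}$, and this crude form already suffices for all later uses; upgrading it to the sharp pointwise--weighted inequality $\int_{\mathbb{R}_+^{n+1}}|fg|\frac{dy\,dt}{t}\ls\int_\rn A(f)(x)C(g)(x)\,dx$ requires keeping track of \emph{where} each atom in the construction above sits and redistributing the mass over the layers $\widehat{B_k^\ast}\setminus\widehat{B_{k+1}^\ast}$ carefully---pairing $g$ only with the thin parts of the tents and evaluating $C(g)$ where it is smallest on each Whitney cube, so that a supremum of $C(g)$ can be replaced by a dyadic discretisation of $\int A(f)C(g)$ via the layer--cake formula. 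I expect the two genuinely substantive points to be the atomic decomposition (the stopping--time and Whitney geometry of $\mathbb{R}_+^{n+1}$) and this last refinement for the sharp form of (i); the remaining steps are Cauchy--Schwarz, H\"older, and the Riesz representation theorem.
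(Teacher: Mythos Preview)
The paper does not prove this proposition at all: it is quoted verbatim from \cite[Proposition~2.7]{bn22}, which in turn records the classical Coifman--Meyer--Stein tent--space results of \cite{cms85}, so there is no ``paper's own proof'' to compare against. Your proposal correctly reproduces the original \cite{cms85} arguments---Fubini plus Cauchy--Schwarz for (ii), the level--set/Whitney atomic decomposition for $T_2^p$, the Riesz representation for the converse of (iii)---and this is exactly the standard route; the only place where your sketch is deliberately incomplete is the sharp pointwise form of (i), which you rightly flag as the nontrivial refinement (in \cite{cms85} it is handled via the notion of points of global $\gamma$--density rather than just the crude atom bound), but since the present paper only ever uses (iii), your outline more than covers what is needed here.
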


Recall that a measure $\nu$ is a Carleson measure of order $\beta \in[1,\infty)$, if there is a positive
constant $c$ such that, for each ball $B$ on $\mathbb{R}^{n}$,
\begin{equation}\label{2.3}
\nu(\widehat{B}) \leq c|B|^{\beta} .
\end{equation}
The smallest constant in \eqref{2.3} is define to be the norm of $\nu$, and is denoted by $\|\nu\|_{V^\beta}$;
see, for instance, \cite{t86}.
	
\begin{lemma}\label{l2.14}
Let $M,M_1\in \mathbb{N}$ such that $M\geq M_1\geq \frac{n}{\alpha}(\frac{1}{p}-1)$ and let
$p\in(\frac{n}{n+\alpha}, 1]$. Suppose that $f \in \mathcal{M_\alpha}(\mathbb{R}^{n})$ such that
$$\left|\lf(t^\alpha L_\alpha\r)^M e^{-t^\alpha L_\alpha}\lf(I-e^{-t^\alpha L_\alpha}\r)^{M_1+1}f(x)
\right|^{2} \frac{d x d t}{t}$$ is a Carleson measure of order $\frac{2}{p}-1$ on $\mathbb{R}_{+}^{n+1}$
and $g \in H_{L_\alpha}^p(\mathbb{R}^{n}) \cap L^{2}(\mathbb{R}^{n})$. Then there exists a positive constant $C$ such that
\begin{align*}
&\int_{\mathbb{R}^{n}} f(x) g(x)\,d x\\
&\quad=C \int_{\mathbb{R}_{+}^{n+1}} \lf(t^\alpha L_\alpha\r)^M
e^{-t^\alpha L_\alpha}\lf(I-e^{-t^\alpha L_\alpha}\r)^{M_1+1} f(x) t^\alpha L_\alpha e^{-t^\alpha
L_\alpha}g(x)\, \frac{d x d t}{t} .
\end{align*}
\end{lemma}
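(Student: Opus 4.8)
The plan is to combine a Calder\'on-type reproducing formula for $g$ with the self-adjointness of $L_\alpha$, and to control every interchange of integration by the kernel bounds of Proposition \ref{p2.3} together with the tent-space duality of Proposition \ref{p2.13}; the constant $C$ is then read off from the reproducing identity. Write $A_t:=(t^\alpha L_\alpha)^M e^{-t^\alpha L_\alpha}(I-e^{-t^\alpha L_\alpha})^{M_1+1}$ and $B_t:=t^\alpha L_\alpha e^{-t^\alpha L_\alpha}$; these are bounded self-adjoint operators on $L^2(\rn)$ that commute with each other, and $A_tB_t=\psi(t^\alpha L_\alpha)$ in the functional calculus, where $\psi(\lz):=\lz^{M+1}e^{-2\lz}(1-e^{-\lz})^{M_1+1}$. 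Since $(1-e^{-\lz})^{M_1+1}\sim\lz^{M_1+1}$ as $\lz\to0^+$ and $\psi$ decays exponentially as $\lz\to\fz$, the number $c_\psi:=\int_0^\fz\psi(\lz)\,\frac{d\lz}{\lz}$ lies in $(0,\fz)$, so by the spectral theorem for the non-negative self-adjoint operator $L_\alpha$,
$$
g=\frac{\alpha}{c_\psi}\int_0^\fz\psi(t^\alpha L_\alpha)g\,\frac{dt}{t}=\frac{\alpha}{c_\psi}\int_0^\fz A_tB_tg\,\frac{dt}{t},
$$
the integral converging in $L^2(\rn)$, with $C=\frac{\alpha}{c_\psi}$ as the resulting constant.

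I would first check that the right-hand side of the asserted identity converges absolutely. Put $F(y,t):=B_tg(y)$; then $A(F)=S_{L_\alpha}(g)$, so $F\in T_2^p(\mathbb{R}_+^{n+1})$ with $\|F\|_{T_2^p}=\|S_{L_\alpha}(g)\|_p\ls\|g\|_{H_{L_\alpha}^p(\rn)}$. On the other hand, the hypothesis that $|A_tf(y)|^2\,\frac{dy\,dt}{t}$ is a Carleson measure of order $\frac2p-1$ is exactly the statement that $G(y,t):=A_tf(y)$ belongs to $T_2^{p,\fz}(\mathbb{R}_+^{n+1})$. Hence Proposition \ref{p2.13}(iii) yields
$$
\int_{\mathbb{R}_+^{n+1}}|A_tf(y)\,B_tg(y)|\,\frac{dy\,dt}{t}\ls\|G\|_{T_2^{p,\fz}}\,\|F\|_{T_2^p}<\fz,
$$
so the right-hand side is well defined and, by dominated convergence, $\int_\ez^R\int_\rn(A_tf)(B_tg)\,\frac{dx\,dt}{t}$ converges to $\int_{\mathbb{R}_+^{n+1}}(A_tf)(B_tg)\,\frac{dx\,dt}{t}$ as $\ez\to0^+$ and $R\to\fz$.

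Next I would establish the identity in truncated form. Fix $0<\ez<R<\fz$ and set $g_{\ez,R}:=\frac{\alpha}{c_\psi}\int_\ez^R\psi(t^\alpha L_\alpha)g\,\frac{dt}{t}\in L^2(\rn)$. Expanding $(I-e^{-t^\alpha L_\alpha})^{M_1+1}$ by the binomial theorem and using Proposition \ref{p2.3} for the kernels of $(t^\alpha L_\alpha)^{M+1}e^{-st^\alpha L_\alpha}$, together with the membership $f\in\mathcal{M}_\alpha(\rn)$ (so that $f$ is square-integrable against the weight appearing in Definition \ref{d2.9}), a Cauchy--Schwarz estimate shows that $\int_\rn|f|\,|\psi(t^\alpha L_\alpha)g|\,dx$ is finite for each $t\in[\ez,R]$ and integrable in $t$ over $[\ez,R]$, and that the (symmetric) kernel $K_t^A$ of $A_t$ satisfies $\int_\rn\int_\rn|f(x)|\,|K_t^A(x,y)|\,|B_tg(y)|\,dx\,dy<\fz$. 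These finiteness facts legitimise Fubini's theorem and the self-adjointness interchange $\int_\rn f\,(A_th)\,dx=\int_\rn(A_tf)\,h\,dx$ with $h=B_tg$, so that
$$
\int_\rn fg_{\ez,R}\,dx=\frac{\alpha}{c_\psi}\int_\ez^R\int_\rn f\,(A_tB_tg)\,dx\,\frac{dt}{t}=\frac{\alpha}{c_\psi}\int_\ez^R\int_\rn(A_tf)(B_tg)\,dx\,\frac{dt}{t}.
$$
Letting $\ez\to0^+$ and $R\to\fz$, the right-hand side tends to $\frac{\alpha}{c_\psi}\int_{\mathbb{R}_+^{n+1}}(A_tf)(B_tg)\,\frac{dx\,dt}{t}$ by the previous paragraph, so the proof reduces to showing $\int_\rn fg_{\ez,R}\,dx\to\int_\rn fg\,dx$.

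This last step is the main obstacle, since $f\notin L^2(\rn)$ in general and one cannot merely invoke $g_{\ez,R}\to g$ in $L^2(\rn)$. I would resolve it by proving $g_{\ez,R}\to g$ in $H_{L_\alpha}^p(\rn)$: this should follow from the area-function (tent-space) description of $H_{L_\alpha}^p(\rn)$ combined with the spectral identity $\frac{\alpha}{c_\psi}\int_0^\fz\psi(t^\alpha\lz)\,\frac{dt}{t}=1$ for $\lz\in(0,\fz)$, the hypothesis $M_1\ge\frac n\alpha(\frac1p-1)$ supplying enough vanishing of $\psi$ at the origin for the relevant operators to act boundedly between the tent spaces; coupled with the fact that the Carleson hypothesis on $f$ makes $g\mapsto\int_\rn fg\,dx$ a bounded linear functional on $H_{L_\alpha}^p(\rn)\cap L^2(\rn)$ (by the Carleson-measure characterisation of $\mathrm{BMO}_{L_\alpha,M}^{\frac1p-1}(\rn)$ together with Theorem \ref{t2.11}), this gives $\int_\rn fg_{\ez,R}\,dx\to\int_\rn fg\,dx$ and closes the argument. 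Alternatively, one can bypass the $H^p$-convergence by first verifying the identity for $g$ in a dense subclass of $H_{L_\alpha}^p(\rn)\cap L^2(\rn)$ on which the reproducing formula is easy to control and then extending by density, using that both sides are continuous in $g$ for $\|\cdot\|_{H_{L_\alpha}^p(\rn)}$. In either route the recurring burden---and the reason Proposition \ref{p2.3} is indispensable---is controlling the interchanges of integration against the merely weighted-$L^2$ decay of $f$ rather than genuine $L^2$ decay.
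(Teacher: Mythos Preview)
Your overall strategy coincides with the paper's: reproducing formula for $g$ via $L^2$-functional calculus, tent-space duality (Proposition~\ref{p2.13}(iii)) to get absolute convergence of the right-hand side, and then self-adjointness plus Fubini to move the operator from $g$ to $f$. The paper dispatches the last step in a single line (``From this, together with Fubini's theorem, it follows that\ldots''), whereas you correctly flag it as the delicate point and work with truncations $g_{\ez,R}$.

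Your proposed resolution of that delicate point, however, is circular within the paper's logical structure. To pass from $\int_\rn fg_{\ez,R}\,dx$ to $\int_\rn fg\,dx$ you invoke ``the Carleson-measure characterisation of $\mathrm{BMO}_{L_\alpha,M}^{\frac1p-1}(\rn)$ together with Theorem~\ref{t2.11}'' so as to make $g\mapsto\int_\rn fg\,dx$ continuous on $H_{L_\alpha}^p\cap L^2$. But the direction of that characterisation you need---Carleson implies BMO---is exactly Lemma~\ref{l2.16}, and the paper's proof of Lemma~\ref{l2.16} consists of quoting the present Lemma~\ref{l2.14}. The same circularity contaminates your density alternative, since there too the continuity of the left-hand side in $\|\cdot\|_{H_{L_\alpha}^p}$ is what is at issue. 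The paper sidesteps this by never demanding $H^p$-continuity of the left side: after moving $A_t$ across by self-adjointness it applies Fubini to $\int_0^\infty\int_\rn f\,(A_tB_tg)\,\frac{dx\,dt}{t}$ and substitutes the $L^2$-reproducing identity directly. If you want a rigorous, non-circular close, justify that Fubini step directly from the kernel bounds of Proposition~\ref{p2.3} together with $f\in\mathcal{M}_\alpha$ and $g\in L^2$, rather than routing through $H^p$-continuity.
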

\begin{proof}
Let $$G(x,t):=t^\alpha L_\alpha e^{-t^\alpha L_\alpha}g(x)$$ and
$$F(x,t):=\lf(t^\alpha L_\alpha \r)^Me^{-t^\alpha L_\alpha}\lf(I-e^{-t^\alpha L_\alpha}\r)^{M_1+1}f(x).$$
Since $g\in H_{L_\alpha}^p(\mathbb{R}^{n})$, it follows that $S_{L_{\alpha}}(g)(x)\in L^p(\mathbb{R}^{n})$,
which, together with the definition of the tent space $T_2^p(\mathbb{R}_{+}^{n+1})$, implies that
$G\in T_2^p(\mathbb{R}_{+}^{n+1})$. On the other hand, by the assumption that
$$\left|\lf(t^\alpha L_\alpha\r)^M e^{-t^\alpha L_\alpha}\lf(I-e^{-t^\alpha L_\alpha}\r)^{M_1+1}
f(x)\right|^{2}\, \frac{d x d t}{t}$$
is a Carleson measure of order $\frac{2}{p}-1$ on $\mathbb{R}_{+}^{n+1}$ and the definition of the tent space
$T_2^{p,\infty}(\mathbb{R}_{+}^{n+1})$, we find that $F\in T_2^{p,\infty}(\mathbb{R}_{+}^{n+1})$.
From (iii) of Proposition \ref{p2.13}, we deduce that
$$\int_{\mathbb{R}_{+}^{n+1}}|G(x,t)F(x,t)|\,\frac{dxdt}{t}\ls 1.$$
Notice that $L_\alpha$ is a self-adjoint operator. Using the $L^2$-functional calculus, we conclude that
there is a positive constant $C$ such that
$$
g(x)=C\int_0^\infty \lf(t^\alpha L_\alpha\r)^M e^{-t^\alpha L_\alpha}t^\alpha L_\alpha
e^{-t^\alpha L_\alpha}\lf(I-e^{-t^\alpha L_\alpha}\r)^{M_1+1}g(x)\frac{dt}{t}\text{ in }L^2(\mathbb{R}^{n}).
$$
From this, together with Fubini's theorem, it follows that
\begin{align*}
&\int_{\mathbb{R}_{+}^{n+1}}G(x,t)F(x,t)\frac{dxdt}{t}\\
&=\int_0^\infty\int_{\mathbb{R}}\lf(t^\alpha L_\alpha\r)^M e^{-t^\alpha L_\alpha}\lf(I-e^{-t^\alpha L_\alpha}\r)^{M_1+1}
f(x)t^\alpha L_\alpha e^{-t^\alpha L_\alpha}g(x)\frac{dx dt}{t}\\
&=\int_0^\infty\int_{\mathbb{R}}f(x)\lf(t^\alpha L_\alpha\r)^M e^{-t^\alpha L_\alpha}\lf(I-e^{-t^\alpha L_\alpha}\r)
^{M_1+1}t^\alpha L_\alpha e^{-t^\alpha L_\alpha}g(x)\frac{dx dt}{t}\\
&=C^{-1}\int_{\mathbb{R}}f(x)g(x)dx.
\end{align*}	
This completes the proof of Lemma \ref{l2.14}.
	\end{proof}

\begin{lemma}[{\cite[Lemma 3.14]{bn22}}]\label{l2.15}
Let $M,M_1\in  \mathbb{N}$ such that $M\geq M_1$ and let $ \gamma\in[0,\frac{\alpha}{n})$. If $f\in
\mathrm{BMO}_{L_{\alpha}, M_1}^{\gamma}(\mathbb{R}^{n})$, then
\begin{equation*}
v(x,t)=\left|\lf(t^\alpha L_\alpha\r)^M e^{-t^\alpha L_\alpha}\lf(I-e^{-t^\alpha L_\alpha}\r)^{M_1+1}f(x)\right|\,\frac{dxdt}{t}
\end{equation*}
is a Carleson measure of order $2\gamma +1$ with $\|v\|_{V^{2\gamma+1}}\ls \|f\|^2_{
\mathrm{BMO}_{L_{\alpha}, M_1}^{\gamma}(\rn)}$.
\end{lemma}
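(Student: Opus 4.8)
The plan is to fix an arbitrary ball $B=B(x_B,r_B)$ and, since $\|v\|_{V^{2\gamma+1}}$ is the supremum over balls of $v(\widehat B)/|B|^{2\gamma+1}$, to reduce to the single‑ball estimate
\begin{equation*}
\int_{\widehat B}\lf|\mathcal{A}_t f(x)\r|^2\,\frac{dx\,dt}{t}\ls|B|^{1+2\gamma}\,\|f\|^2_{\mathrm{BMO}_{L_\alpha,M_1}^\gamma(\rn)},\qquad \mathcal{A}_t:=(t^\alpha L_\alpha)^M e^{-t^\alpha L_\alpha}(I-e^{-t^\alpha L_\alpha})^{M_1+1}.
\end{equation*}
The first move is the $L^2$‑functional‑calculus identity, valid on $\widehat B$ because there $t<r_B$ (hence $\widehat B\subset B\times(0,r_B)$):
\begin{equation*}
\mathcal{A}_t f=\Phi_t(L_\alpha)\,g_B,\qquad g_B:=(I-e^{-r_B^\alpha L_\alpha})^{M_1+1}f,\qquad \Phi_t(\lambda):=(t^\alpha\lambda)^M e^{-t^\alpha\lambda}\,\Theta_t(\lambda),
\end{equation*}
where $\Theta_t(\lambda):=\big((1-e^{-t^\alpha\lambda})/(1-e^{-r_B^\alpha\lambda})\big)^{M_1+1}\in[0,1]$ for $t\le r_B$; since $f$ is only of type $(L_\alpha,\alpha)$ this is first checked on $L^2$ after truncating $f$ to large balls and then passed to the limit using the kernel decay of Proposition \ref{p2.3}, a routine point. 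The gain is that, unlike $f$, the function $g_B$ is controlled on every dilate of $B$ directly by the BMO‑norm: covering $2^{j}B$ by $\ls 2^{jn}$ balls of radius $r_B$ and applying Definition \ref{d2.10} to each gives $\|g_B\|_{L^2(2^{j}B)}\ls 2^{jn/2}|B|^{1/2+\gamma}\|f\|_{\mathrm{BMO}_{L_\alpha,M_1}^\gamma(\rn)}$ for every $j\ge0$.

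Next I would split $g_B=g_B\chi_{4B}+\sum_{j\ge1}g_B\chi_{S_j(4B)}$ and estimate $\int_{\widehat B}|\Phi_t(L_\alpha)(g_B\chi_{\,\cdot\,})|^2\,\frac{dx\,dt}{t}$ piece by piece. For the \emph{local} piece I would enlarge $\widehat B$ to $\rn\times(0,\fz)$ and discard the cut‑off, then invoke the spectral‑theorem square‑function estimate $\int_0^\fz\|\Phi_t(L_\alpha)u\|_{L^2(\rn)}^2\,\frac{dt}{t}\ls\|u\|_{L^2(\rn)}^2$, which holds uniformly in $r_B$ because $\sup_{\lambda>0}\int_0^\fz|\Phi_t(\lambda)|^2\,\frac{dt}{t}\le\sup_{\lambda>0}\int_0^\fz(t^\alpha\lambda)^{2M}e^{-2t^\alpha\lambda}\,\frac{dt}{t}=\Gamma(2M)/(\alpha\,4^{M})<\fz$ (here $\Theta_t\le1$ and $M\ge M_1\ge1$ are used). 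Applying this with $u=g_B\chi_{4B}$ and inserting the displayed control of $\|g_B\|_{L^2(4B)}$ makes this piece $\ls|B|^{1+2\gamma}\|f\|^2_{\mathrm{BMO}_{L_\alpha,M_1}^\gamma(\rn)}$.

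For the \emph{far} pieces $j\ge1$ the goal is a bound of the form $\|\Phi_t(L_\alpha)(g_B\chi_{S_j(4B)})\|_{L^2(B)}\ls (t/r_B)^{\alpha}\,j\,2^{-j\delta}\,|B|^{1/2+\gamma}\|f\|_{\mathrm{BMO}_{L_\alpha,M_1}^\gamma(\rn)}$ for some $\delta>0$: the positive power of $t$ makes $\int_0^{r_B}\frac{dt}{t}$ converge and the factor $2^{-j\delta}$ (dominating the polynomial $j$) makes the $j$‑sum converge. To extract both I would expand, via $(1-x)^{-(M_1+1)}=\sum_{i\ge0}\binom{i+M_1}{M_1}x^i$, $\Phi_t(L_\alpha)=\sum_{i\ge0}\binom{i+M_1}{M_1}\mathcal{A}_t e^{-ir_B^\alpha L_\alpha}$. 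The $i=0$ term is $\mathcal{A}_t$, whose kernel obeys, by Proposition \ref{p2.3} (binomial expansion of $(I-e^{-t^\alpha L_\alpha})^{M_1+1}$, then $k=M$), the scale‑$t$ bound $|\mathcal{A}_t(x,y)|\ls t^{-n}\big(t/(t+|x-y|)\big)^{n+\alpha}D_\sigma(x,t^\alpha)D_\sigma(y,t^\alpha)$; for $i\ge1$, writing $(I-e^{-t^\alpha L_\alpha})^{M_1+1}=(t^\alpha L_\alpha)^{M_1+1}\int_{[0,1]^{M_1+1}}e^{-t^\alpha\bar s L_\alpha}\,ds$ (with $\bar s=s_1+\dots+s_{M_1+1}$) to exhibit the cancellation yields $|\mathcal{A}_t e^{-ir_B^\alpha L_\alpha}(x,y)|\ls (t/r_B)^{\alpha(M+M_1+1)}\,i^{-(M+M_1+1+n/\alpha)}\,r_B^{-n}D_\sigma(x,ir_B^\alpha)D_\sigma(y,ir_B^\alpha)\big((i^{1/\alpha}r_B+|x-y|)/(i^{1/\alpha}r_B)\big)^{-n-\alpha}$. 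Since $|x-y|\sim2^jr_B$ for $x\in B$ and $y\in S_j(4B)$, a Schur/Minkowski argument reduces matters to integrating the weights $D_\sigma$ over $B$ and over $S_j(4B)$, which I would do with Lemma \ref{l2.1} (using $2\sigma\le n-\alpha<n$; equivalently one may use the $L^2$–$L^2$ off‑diagonal estimates of Theorem \ref{t2.2}, whose hypotheses these operator families satisfy by Proposition \ref{p2.3}). Summing the $i$‑series (convergent since $M\ge M_1$ and $n/\alpha>1$), inserting $\|g_B\|_{L^2(S_j(4B))}\ls2^{jn/2}|B|^{1/2+\gamma}\|f\|_{\mathrm{BMO}_{L_\alpha,M_1}^\gamma(\rn)}$, integrating $\int_0^{r_B}(t/r_B)^{2\alpha}\frac{dt}{t}<\fz$ in $t$, and summing the resulting geometric series in $j$ (its rate positive because $\sigma>-\alpha$) gives the far‑pieces bound $\ls|B|^{1+2\gamma}\|f\|^2_{\mathrm{BMO}_{L_\alpha,M_1}^\gamma(\rn)}$; with the local estimate this proves the single‑ball bound, hence the lemma.

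The hard part will be exactly this off‑diagonal analysis of the far pieces. The kernels carry the non‑Gaussian factors $D_\sigma$, which genuinely blow up near the origin when $\sigma\in(0,(n-\alpha)/2]$, so a naive Schur test with pointwise kernel bounds diverges; one must instead integrate these weights against the (sharp) second bound of Lemma \ref{l2.1} and then check that the surviving powers of $2^j$ are still beaten by the $(n+\alpha)$‑order off‑diagonal decay under the sole structural constraint $\sigma\in(-\alpha,(n-\alpha)/2]$. A secondary subtlety is that, for $i\ge1$, the operator $(I-e^{-t^\alpha L_\alpha})^{M_1+1}e^{-ir_B^\alpha L_\alpha}$ is $L^2$‑bounded but not decaying, so one cannot expand it binomially term by term without losing the crucial $(t/r_B)$‑smallness — the integral representation above is precisely what recovers it, and keeping this smallness coupled to the correct semigroup time $\sim ir_B^\alpha$ (so that the $D_\sigma$‑integrals stay small) is the delicate bookkeeping.
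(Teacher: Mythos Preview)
The paper does not give its own proof of this lemma; it is quoted verbatim from \cite[Lemma~3.14]{bn22} and used as a black box. So there is no in-paper proof to compare against.

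Your strategy is sound and in the spirit of the techniques the paper deploys elsewhere (see the proof of \eqref{3.2} in Theorem~\ref{t3.4}). The reduction to $g_B:=(I-e^{-r_B^\alpha L_\alpha})^{M_1+1}f$, the covering bound $\|g_B\|_{L^2(2^jB)}\ls 2^{jn/2}|B|^{1/2+\gamma}\|f\|_{\mathrm{BMO}}$, the spectral square-function estimate for the local piece, and the off-diagonal analysis for the far pieces via Proposition~\ref{p2.3} and Lemma~\ref{l2.1} all check out. The $i=0$ term indeed gives a $2^{-j\alpha}$ rate after integrating $t^{2\alpha-1}$ on $(0,r_B)$, and your $i\ge1$ kernel bound is correct; when you integrate the $D_\sigma$-weights over $B$ and $S_j(4B)$ using Lemma~\ref{l2.1} you pick up at worst an extra $i^{n/\alpha}$, but the residual $i^{-(M+1)}$ still sums since $M\ge1$.

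One methodological remark: the paper's analogous computation (the $\mathrm{I}+\mathrm{J}$ split in the proof of \eqref{3.2}) avoids your infinite $i$-series by an algebraic finite decomposition of $(I-e^{-t^\alpha L_\alpha})^{2M_1+1}$ into two pieces $\Psi_1,\Psi_2$ that each carry a factor $(I-e^{-r^\alpha L_\alpha})^{M_0+1}$ explicitly. That trick, however, needs the exponent $2M_1+1$ to be large relative to $M_0$, whereas Lemma~\ref{l2.15} only has exponent $M_1+1$; your series expansion of $(1-e^{-r_B^\alpha\lambda})^{-(M_1+1)}$ is precisely what circumvents this constraint, at the cost of tracking one more summation. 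Both routes lead to the same endpoint.

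The one place to be careful is the step you flag as ``routine'': passing the functional-calculus identity $\mathcal{A}_t f=\Phi_t(L_\alpha)g_B$ from $L^2$ to $f\in\mathcal{M}_\alpha$. Since $\Theta_t$ is merely bounded (it does not decay at infinity), $\Phi_t(L_\alpha)$ has no better a priori kernel than $\mathcal{A}_t$ itself, so the limiting argument must be run on each annular piece \emph{after} the split, not before; this is fine because on each $S_j(4B)$ the truncations converge in $L^2$ and the operators are $L^2$-bounded uniformly.
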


\begin{lemma}\label{l2.16}
Let $p\in(\frac{n}{n+\alpha},1]$, $M_0 \geq\max\{\frac{n+2\alpha}{2\alpha},\frac{n}{p\alpha}\}$,
$M_0\in \mathbb{N}$, and $M,M_1\in \mathbb{N}$ such that $M\geq M_1\geq \frac{n}{\alpha}(\frac{1}{p}-1)$.
If $f \in \mathcal{M_\alpha}(\mathbb{R}^{n})$ and
$$\left|\lf(t^\alpha L_\alpha\r)^M e^{-t^\alpha L_\alpha}\lf(I-e^{-t^\alpha L_\alpha}\r)^{M_1+1}f(x)
\right|\,\frac{dxdt}{t}$$
is a Carleson measure of order $\frac{2}{p}-1$ on $\mathbb{R}_{+}^{n+1}$, then
$f \in \mathrm{BMO}_{L_{\alpha}, M_0}^{\frac{1}{p}-1}(\mathbb{R}^{n})$ .	
\end{lemma}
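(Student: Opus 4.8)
The plan is to argue ball by ball, converting the mean‑square quantity that defines $\mathrm{BMO}_{L_{\alpha}, M_0}^{\frac{1}{p}-1}(\rn)$ into a pairing against $H_{L_\alpha}^p(\rn)$ and then running that pairing through Lemma \ref{l2.14} and the tent‑space duality of Proposition \ref{p2.13}(iii). Fix a ball $B=B(x_B,r_B)$ and let $\mu$ denote the Carleson measure in the hypothesis. Using the heat kernel estimate for $L_\alpha$ recalled in Section \ref{s2}, Lemma \ref{l2.1}, and $f\in\mathcal{M}_\alpha(\rn)$, I would first check that $(I-e^{-r_B^\alpha L_\alpha})^{M_0+1}f\in L^2_{\loc}(\rn)$ and that, for $h\in L^2(\rn)$ with $\supp h\subset B$, expanding $(I-e^{-r_B^\alpha L_\alpha})^{M_0+1}$ into a finite sum of heat operators and using Fubini together with the symmetry of the kernel of $e^{-tL_\alpha}$ gives $\int_\rn (I-e^{-r_B^\alpha L_\alpha})^{M_0+1}f\cdot h\,dx=\int_\rn f\cdot g\,dx$ with $g:=(I-e^{-r_B^\alpha L_\alpha})^{M_0+1}h\in L^2(\rn)$. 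By $L^2(B)$‑duality it then suffices to show $|\int_\rn fg\,dx|\ls\|\mu\|_{V^{\frac{2}{p}-1}}^{1/2}|B|^{\frac1p-\frac12}$ whenever $\|h\|_{L^2(B)}\le1$; this yields $\|(I-e^{-r_B^\alpha L_\alpha})^{M_0+1}f\|_{L^2(B)}\ls\|\mu\|_{V^{\frac{2}{p}-1}}^{1/2}|B|^{\frac1p-\frac12}$, and dividing by $|B|^{1+2(\frac1p-1)}$, using $2(\frac1p-\frac12)-1-2(\frac1p-1)=0$, taking square roots and the supremum over $B$ gives $\|f\|_{\mathrm{BMO}_{L_{\alpha}, M_0}^{\frac{1}{p}-1}(\rn)}\ls\|\mu\|_{V^{\frac{2}{p}-1}}^{1/2}<\infty$ (recall $f\in\mathcal{M}_\alpha(\rn)$ is assumed).

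To estimate $\int_\rn fg\,dx$ I would apply Lemma \ref{l2.14}, whose hypotheses on $p$, $M$, $M_1$ and $f$ are exactly those of the present lemma, provided $g\in H_{L_\alpha}^p(\rn)\cap L^2(\rn)$; it gives $\int_\rn fg\,dx=C\int_{\mathbb{R}_{+}^{n+1}}F(x,t)G(x,t)\,\frac{dx\,dt}{t}$ with $F(x,t):=(t^\alpha L_\alpha)^Me^{-t^\alpha L_\alpha}(I-e^{-t^\alpha L_\alpha})^{M_1+1}f(x)$ and $G(x,t):=t^\alpha L_\alpha e^{-t^\alpha L_\alpha}g(x)$. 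Unwinding the definition of $C_p$, the Carleson hypothesis (which, as in Lemma \ref{l2.14}, concerns $d\mu(x,t):=|F(x,t)|^2\,\frac{dx\,dt}{t}$) says precisely that $F\in T_2^{p,\infty}(\mathbb{R}_{+}^{n+1})$ with $\|F\|_{T_2^{p,\infty}}\le\|\mu\|_{V^{\frac{2}{p}-1}}^{1/2}$, while $A(G)=S_{L_\alpha}(g)$ shows $G\in T_2^p(\mathbb{R}_{+}^{n+1})$ with $\|G\|_{T_2^p}=\|g\|_{H_{L_\alpha}^p(\rn)}$. Then Proposition \ref{p2.13}(iii) gives $|\int_\rn fg\,dx|\ls\|F\|_{T_2^{p,\infty}}\|G\|_{T_2^p}\ls\|\mu\|_{V^{\frac{2}{p}-1}}^{1/2}\|g\|_{H_{L_\alpha}^p(\rn)}$, so the whole argument reduces to the bound $\|g\|_{H_{L_\alpha}^p(\rn)}\ls|B|^{\frac1p-\frac12}$ for $g=(I-e^{-r_B^\alpha L_\alpha})^{M_0+1}h$ with $\|h\|_{L^2(B)}\le1$.

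Establishing this bound is the heart of the argument and I expect it to be the main obstacle. I would show that $c\,|B|^{\frac12-\frac1p}g$ is, for a suitable absolute constant $c$, a $(p,2,N,\epsilon)_{L_\alpha}$‑molecule adapted to $B$ with $\epsilon=\alpha+n-\frac np$ and some $N\in(\frac{n}{\alpha p},M_0]\cap\nn$; Theorem \ref{t2.7} (see also Remark \ref{r2.8}) then yields $g\in H_{L_\alpha}^p(\rn)$ with $\|g\|_{H_{L_\alpha}^p(\rn)}\ls|B|^{\frac1p-\frac12}\|h\|_{L^2(B)}\le|B|^{\frac1p-\frac12}$. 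Writing $(1-e^{-u})^{M_0+1}=u^N\psi(u)$ with $\psi(u):=(1-e^{-u})^{M_0+1}u^{-N}$ bounded (this needs $M_0+1\ge N$), one has $g=L_\alpha^Nb$ with $b:=r_B^{\alpha N}\psi(r_B^\alpha L_\alpha)h$, and the molecular size estimates $\|(r_B^\alpha L_\alpha)^kb\|_{L^2(S_j(B))}\ls2^{-j\epsilon}r_B^{\alpha N}|2^jB|^{\frac12-\frac1p}$, $k=0,\dots,N$, would follow by expressing $(r_B^\alpha L_\alpha)^k\psi(r_B^\alpha L_\alpha)$ as a superposition (against a fast‑decaying kernel) of operators built from $(1-e^{-r_B^\alpha L_\alpha})^{M_0+1}e^{-sr_B^\alpha L_\alpha}$ and then invoking the off‑diagonal $L^2$–$L^2$ bounds of Theorem \ref{t2.2}, which are usable there with $p=q=2$ since $\sigma\le\frac{n-\alpha}{2}<\frac n2$ forces $n_\sigma^{\prime}<2<n_\sigma$; the numerology $M_0\ge\max\{\frac{n+2\alpha}{2\alpha},\frac{n}{p\alpha}\}$ is exactly what makes the resulting $j$‑ and time‑integrations converge with decay rate $\epsilon=\alpha+n-\frac np$ (roughly, the factor $(1-e^{-r_B^\alpha L_\alpha})^{M_0+1}$ must absorb $\alpha+\frac n2$ orders of off‑diagonal decay, which dictates $M_0\gtrsim\frac{n}{2\alpha}$). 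Alternatively one can bypass molecules and bound $\|S_{L_\alpha}(g)\|_{L^p(\rn)}$ directly, splitting $\mathbb{R}_{+}^{n+1}$ according to $t\lessgtr r_B$ and summing the $L^2$‑masses of $t^\alpha L_\alpha e^{-t^\alpha L_\alpha}g$ over the annuli $S_j(B)$. Either way, the difficulty is that the heat kernel of $L_\alpha$ carries the extra weights $D_\sigma(x,t)D_\sigma(y,t)$ and decays only polynomially off the diagonal, so the standard Gaussian/Davies–Gaffney bookkeeping is unavailable and one must work consistently with the refined estimates of Theorem \ref{t2.2} and Proposition \ref{p2.3}, tracking $\sigma$, $p$ and $M_0$ with care.
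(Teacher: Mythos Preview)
Your approach is correct but differs from the paper's, which is a two-line argument: from Lemma~\ref{l2.14} and Proposition~\ref{p2.13}(iii) one sees that $|\int_{\rn} fg\,dx|\ls \|g\|_{H^p_{L_\alpha}(\rn)}$ for every $g\in H^p_{L_\alpha}(\rn)\cap L^2(\rn)$, so $f$ induces a bounded linear functional on $H^p_{L_\alpha}(\rn)$, and Theorem~\ref{t2.11}(b) then yields $f\in\mathrm{BMO}_{L_\alpha,M_0}^{\frac1p-1}(\rn)$ directly. You instead test the $\mathrm{BMO}$ condition ball by ball, which avoids invoking the full duality theorem but forces you to prove the key estimate $\|(I-e^{-r_B^\alpha L_\alpha})^{M_0+1}h\|_{H^p_{L_\alpha}(\rn)}\ls|B|^{\frac1p-\frac12}\|h\|_{L^2(B)}$ by hand. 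That estimate is available---it is \cite[Lemma~3.16]{bn22}, and the paper itself uses it later in the proof of Theorem~\ref{t3.4} (direction (b)$\Rightarrow$(a))---so your strategy goes through; in fact your ball-by-ball computation is essentially the argument the paper carries out in that later proof, just invoked earlier. The paper's route is shorter because Theorem~\ref{t2.11} has already packaged this work, whereas your route is more self-contained and makes the dependence on the Carleson norm explicit.
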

\begin{proof}
For any $g(x)\in H_{L_\alpha}^p(\mathbb{R}^n)\cap L^2(\mathbb{R}^n)$, from the proof of
Lemma \ref{l2.14}, it is easy to see that
\begin{align*}
\lf|\int_{\mathbb{R}^{n}} f(x) g(x) d x\r|\ls 1.
\end{align*}
Then, by Theorem \ref{t2.11} and the fact that $H_{L_\alpha}^p(\mathbb{R}^n)\cap L^2(\mathbb{R}^n)$
is dense in the space $H_{L_\alpha}^p(\mathbb{R}^n)$, we have $f\in \lf(H_{L_\alpha}^p(\mathbb{R}^n)\r)^*
=\mathrm{BMO}_{L_\alpha ,M_0}^{\frac{1}{p}-1}(\mathbb{R}^n)$,
which completes the proof of Lemma \ref{l2.16}.
\end{proof}
	
At the end of this section, we will show some conclusions related to tent spaces, which play an important role
in the subsequent proofs. One of the most important properties of tent spaces is the atomic decomposition.
We now recall the definition of atoms in the tent space $T_2^p(\mathbb{R}_{+}^{n+1})$ for $p\in(0,1]$.
\begin{definition}[{\cite{cms85}}]\label{d2.17}
For $p\in(0,1]$, a measurable function $F$ on $\mathbb{R}_{+}^{n+1}$ is said to be a $T_{2}^{p}$-atom if
there exists a ball $B \subset \mathbb{R}^{n}$ such that $F$ is supported in $\widehat{B}$ and
$$
\int_{\widehat{B}}|F(x, t)|^{2} \frac{d x d t}{t} \leq|B|^{1-\frac{2}{p}}.
$$
\end{definition}
	
\begin{lemma}[{\cite[Lemma 2.6]{bn22}}]\label{l2.18}
Let $p\in(0,1]$. For every $F \in T_{2}^{p}(\mathbb{R}_{+}^{n+1})$ there exist a positive constant $C_{(p)}$, a sequence of numbers $\left\{\lambda_{j}\right\}_{j=0}^{\infty}$ and a sequence of $T_{2}^{p}$-atoms $\left\{A_{j}\right\}_{j=0}^{\infty}$ such that
$$
F=\sum_{j=0}^{\infty} \lambda_{j} A_{j} \text { in } T_{2}^{p}(\mathbb{R}_{+}^{n+1}) \text { a.e in } \mathbb{R}_{+}^{n+1}
$$
and
$$\sum_{j=0}^{\infty}\left|\lambda_{j}\right|^{p} \leq C_{(p)}\|F\|_{T_{2}^{p}}.$$
Furthermore, if $F \in T_{2}^{p}(\mathbb{R}_{+}^{n+1}) \cap T_{2}^{2}(\mathbb{R}_{+}^{n+1})$, then the
sum also converges in $T_{2}^{2}(\mathbb{R}_{+}^{n+1})$.
	\end{lemma}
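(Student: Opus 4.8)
The statement is the Coifman--Meyer--Stein atomic decomposition of tent spaces, and the plan is to reproduce its classical stopping-time proof (see \cite{cms85}), the only bookkeeping point being that all apertures here equal $1$. First note that $A(F)$ is lower semicontinuous on $\rn$ (apply Fatou's lemma, using that $\chi_{\{|x-y|<t\}}=\chi_{\{|x-y|\le t\}}$ for a.e.\ $(y,t)$), so for each $k\in\zz$ the set $O_k:=\{x\in\rn:A(F)(x)>2^k\}$ is open. Fix the dimensional constant $\gamma_n:=2^{-n-1}$, put $O_k^\ast:=\{x\in\rn:\mathcal{M}(\chi_{O_k})(x)>\gamma_n\}$ with $\mathcal{M}$ the Hardy--Littlewood maximal operator (so $O_k\subset O_k^\ast$ and $|O_k^\ast|\le C_n|O_k|$ by the weak type $(1,1)$ bound), and take a Whitney decomposition $O_k^\ast=\bigcup_j Q_{k,j}$ into dyadic cubes with pairwise disjoint interiors whose side lengths are comparable to their distance to $(O_k^\ast)^c$. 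Writing $\widehat{O}:=(R(O^c))^c$ for the tent over an open set $O$, define the layers $L_k:=\widehat{O_k^\ast}\setminus\widehat{O_{k+1}^\ast}$, the pieces $T_{k,j}:=L_k\cap(Q_{k,j}\times(0,\fz))$, and set
$$\lambda_{k,j}:=|Q_{k,j}|^{\frac1p-\frac12}\bigg(\int_{T_{k,j}}|F(y,t)|^2\,\frac{dy\,dt}{t}\bigg)^{1/2},\qquad A_{k,j}:=\lambda_{k,j}^{-1}F\chi_{T_{k,j}},$$
discarding the $(k,j)$ for which the integral vanishes and re-indexing $\{(k,j)\}$ as a single sequence.

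Next I would verify two structural facts. Since $(y,t)\in\widehat{O_k^\ast}$ forces $\overline{B(y,t)}\subset O_k^\ast$, hence $y\in O_k^\ast=\bigcup_j Q_{k,j}$, the disjointness of the interiors gives $L_k=\bigsqcup_j T_{k,j}$ modulo a null set; the $\widehat{O_k^\ast}$ increase as $k$ decreases and $\bigcap_k\widehat{O_k^\ast}=\varnothing$ (a point there would contain a ball $\overline{B(y,t)}$ inside $\bigcap_k O_k^\ast$, which is null because $|O_k^\ast|\ls|\{A(F)>2^k\}|\to0$ as $k\to\fz$), so the $L_k$ are pairwise disjoint; and if $(y,t)$ is a Lebesgue point of $F$ with $F(y,t)\ne0$, then $\overline{B(y,t)}\subset\{A(F)>0\}=\bigcup_k O_k$, so by compactness $\overline{B(y,t)}\subset O_{k_0}$ for some $k_0$ and $(y,t)\in\widehat{O_{k_0}}\subset\bigcup_kL_k$ --- hence $\bigcup_{k,j}T_{k,j}$ covers $\{F\ne0\}$ up to a null set. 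For the atom property, the Whitney relation gives $t\le\dist(y,(O_k^\ast)^c)\ls\ell(Q_{k,j})$ whenever $(y,t)\in T_{k,j}$, so $T_{k,j}\subset\widehat{B_{k,j}}$ for a ball $B_{k,j}$ with $|B_{k,j}|\sim|Q_{k,j}|$, and by the normalization $\int_{\widehat{B_{k,j}}}|A_{k,j}|^2\,\frac{dy\,dt}{t}=|Q_{k,j}|^{1-\frac2p}\sim|B_{k,j}|^{1-\frac2p}$, so each $A_{k,j}$ is a fixed dimensional multiple of a $T_2^p$-atom.

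The heart of the argument --- and the step I expect to be the main obstacle --- is the energy estimate $\int_{L_k}|F(y,t)|^2\,\frac{dy\,dt}{t}\ls 2^{2k}|O_k^\ast|$. To prove it, observe that $(y,t)\notin\widehat{O_{k+1}^\ast}$ means $\overline{B(y,t)}$ meets $(O_{k+1}^\ast)^c$, so there is $x_0$ with $\mathcal{M}(\chi_{O_{k+1}})(x_0)\le\gamma_n$; since $B(y,t)\subset B(x_0,2t)$, a volume count gives $|\{x\in B(y,t):A(F)(x)\le2^{k+1}\}|\ge(1-2^n\gamma_n)|B(y,t)|=\tfrac12|B(y,t)|$. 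Because $L_k\subset\widehat{O_k^\ast}$ also forces $x\in O_k^\ast$ for every $x$ with $\Gamma(x)\cap L_k\ne\varnothing$, Fubini's theorem yields
$$\int_{L_k}|F|^2\,\frac{dy\,dt}{t}\ls\int_{O_k^\ast\cap\{A(F)\le2^{k+1}\}}\int_{\Gamma(x)}|F(y,t)|^2\,\frac{dy\,dt}{t^{n+1}}\,dx\le 2^{2(k+1)}|O_k^\ast|.$$
Then Hölder's inequality in $j$ with exponents $\frac{2}{2-p}$ and $\frac2p$, together with $\sum_j|Q_{k,j}|=|O_k^\ast|\ls|O_k|$ and $\sum_j\int_{T_{k,j}}|F|^2\frac{dy\,dt}{t}=\int_{L_k}|F|^2\frac{dy\,dt}{t}$, gives $\sum_j|\lambda_{k,j}|^p\ls|O_k|^{1-\frac p2}(2^{2k}|O_k|)^{\frac p2}=2^{kp}|O_k|$; summing over $k\in\zz$ and using $\sum_{k}2^{kp}|\{A(F)>2^k\}|\sim\int_\rn A(F)^p\,dx$ yields $\sum_{k,j}|\lambda_{k,j}|^p\ls\|A(F)\|_{L^p(\rn)}^p=\|F\|_{T_2^p}^p$.

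Finally, for convergence: if $S$ is a finite set of indices and $G_S:=\bigcup_{(k,j)\in S}T_{k,j}$, then $F-\sum_{(k,j)\in S}\lambda_{k,j}A_{k,j}=F\chi_{G_S^c}$ a.e., so $\|F-\sum_{(k,j)\in S}\lambda_{k,j}A_{k,j}\|_{T_2^p}=\|A(F\chi_{G_S^c})\|_{L^p(\rn)}$; since $A(F\chi_{G_S^c})\le A(F)\in L^p(\rn)$ and $A(F\chi_{G_S^c})(x)\to0$ for a.e.\ $x$ (as $A(F)(x)<\fz$ a.e.\ and $\bigcup_{k,j}T_{k,j}$ covers $\{F\ne0\}$ a.e.), the dominated convergence theorem gives convergence in $T_2^p(\mathbb{R}_{+}^{n+1})$, and the pointwise a.e.\ identity follows from the essential disjointness of the $T_{k,j}$. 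If moreover $F\in T_2^2(\mathbb{R}_{+}^{n+1})$, the disjointness gives the orthogonality $\|F-\sum_{(k,j)\in S}\lambda_{k,j}A_{k,j}\|_{T_2^2}^2\sim\int_{G_S^c}|F|^2\frac{dy\,dt}{t}\to0$, so the series converges in $T_2^2(\mathbb{R}_{+}^{n+1})$ as well.
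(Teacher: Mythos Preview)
The paper does not give its own proof of this lemma: it is quoted verbatim as \cite[Lemma~2.6]{bn22}, which in turn is the Coifman--Meyer--Stein atomic decomposition \cite{cms85}, so there is nothing in the paper to compare your argument against. Your proposal is the standard stopping-time proof from \cite{cms85} and is correct in outline and in its key estimates (the density-point argument via $\mathcal{M}(\chi_{O_{k+1}})$, the Whitney localization, the energy bound $\int_{L_k}|F|^2\frac{dy\,dt}{t}\lesssim 2^{2k}|O_k^\ast|$, and the $\ell^p$ control of the coefficients); the convergence arguments in $T_2^p$ and $T_2^2$ via dominated convergence and orthogonality are also fine. One cosmetic point: the lemma as stated asks for $\sum_j|\lambda_j|^p\le C_{(p)}\|F\|_{T_2^p}$, whereas your argument (and the correct inequality) yields $\sum_j|\lambda_j|^p\le C_{(p)}\|F\|_{T_2^p}^p$; this is a typo in the statement rather than a defect in your proof.
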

\begin{definition}\label{d2.19}
Let $p\in(0,1]$. The space $\widetilde{{T}_{2}^p}(\mathbb{R}_{+}^{n+1})$ is defined to be the set of all
$f=\sum_{j} \lambda_{j} a_{j}$, where the series converges in
$({T}_{2}^{p,\infty}(\mathbb{R}_{+}^{n+1}))^{*}$, $\left\{a_{j}\right\}_{j}$ are $T_{2}^p$-atoms and
$\{\lambda_{j}\}_{j} \in \ell^{p}$. If $f \in \widetilde{{T}_{2}^p}(\mathbb{R}_{+}^{n+1})$, then define
$$
\|f\|_{\widetilde{{T}_{2}^p}} := \inf \left\{\lf(\sum_{j}\left|\lambda_{j}\right|^p\r)^\frac{1}{p}\right\}
$$
where the infimum is taken over all possible decompositions of $f$ as above.
	\end{definition}
By {\cite[Lemma 3.1]{hm09}}, $\widetilde{{T}_{2}^p}(\mathbb{R}_{+}^{n+1})$ is a Banach space.
Moreover, as a special case of \cite[Lemma 4.1]{jy112}, we have the following Lemma \ref{l2.20}, which implies
that $T_2^p(\mathbb{R}_{+}^{n+1})$ is dense in $\widetilde{{T}_{2}^p}(\mathbb{R}_{+}^{n+1})$. Thus, in this
sense, $\widetilde{{T}_{2}^p}(\mathbb{R}_{+}^{n+1})$ is called the Banach completion of the space
$T_2^p(\mathbb{R}_{+}^{n+1})$.
	
\begin{lemma}\label{l2.20}
There exists a positive constant $C$ such that, for all $f\in T_2^p(\mathbb{R}_{+}^{n+1})$,
$f\in\widetilde{{T}_{2}^p}(\mathbb{R}_{+}^{n+1})$ and
$$
\|f\|_{\widetilde{{T}_{2}^p}}\le C\|f\|_{T_2^p}.$$
\end{lemma}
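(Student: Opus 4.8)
The plan is to deduce both the inclusion $T_2^p(\mathbb{R}_{+}^{n+1})\subset\widetilde{{T}_{2}^p}(\mathbb{R}_{+}^{n+1})$ and the norm bound from the atomic decomposition of Lemma \ref{l2.18}, once one observes that every $T_2^p$-atom induces a linear functional on $T_2^{p,\infty}(\mathbb{R}_{+}^{n+1})$ of norm at most $1$. This observation upgrades the convergence of the atomic series of $f$ from $T_2^p(\mathbb{R}_{+}^{n+1})$ to $(T_2^{p,\infty}(\mathbb{R}_{+}^{n+1}))^{*}$, which is precisely the mode of convergence required in Definition \ref{d2.19}.

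First I would fix $f\in T_2^p(\mathbb{R}_{+}^{n+1})$ and invoke Lemma \ref{l2.18} to write $f=\sum_{j=0}^\infty\lambda_jA_j$, convergent both in $T_2^p(\mathbb{R}_{+}^{n+1})$ and almost everywhere in $\mathbb{R}_{+}^{n+1}$, where $\{A_j\}_{j}$ are $T_2^p$-atoms and $\lf(\sum_{j=0}^\infty|\lambda_j|^p\r)^{1/p}\ls\|f\|_{T_2^p}$; since $p\le1$, this also gives $\sum_{j=0}^\infty|\lambda_j|\le\lf(\sum_{j=0}^\infty|\lambda_j|^p\r)^{1/p}<\infty$. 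Next, for a $T_2^p$-atom $A$ supported in $\widehat{B}$, the Cauchy--Schwarz inequality combined with the size condition $\int_{\widehat{B}}|A|^2\,\frac{dxdt}{t}\le|B|^{1-\frac2p}$ from Definition \ref{d2.17} and the estimate $\int_{\widehat{B}}|g|^2\,\frac{dxdt}{t}\le|B|^{\frac2p-1}\|g\|_{T_2^{p,\infty}}^2$ (which is immediate from the definition of $C_p$ and $\|g\|_{T_2^{p,\infty}}=\|C_p(g)\|_{\infty}$) yields, for every $g\in T_2^{p,\infty}(\mathbb{R}_{+}^{n+1})$,
$$
\lf|\int_{\mathbb{R}_{+}^{n+1}}A(x,t)g(x,t)\,\frac{dxdt}{t}\r|\le\lf(\int_{\widehat{B}}|A|^2\,\frac{dxdt}{t}\r)^{1/2}\lf(\int_{\widehat{B}}|g|^2\,\frac{dxdt}{t}\r)^{1/2}\le\|g\|_{T_2^{p,\infty}},
$$
so that $A$, identified with the functional $g\mapsto\int_{\mathbb{R}_{+}^{n+1}}Ag\,\frac{dxdt}{t}$, lies in $(T_2^{p,\infty}(\mathbb{R}_{+}^{n+1}))^{*}$ with norm at most $1$.

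Since $(T_2^{p,\infty}(\mathbb{R}_{+}^{n+1}))^{*}$ is a Banach space and $\sum_{j}|\lambda_j|<\infty$, the series $\sum_{j}\lambda_jA_j$ then converges in $(T_2^{p,\infty}(\mathbb{R}_{+}^{n+1}))^{*}$ to some functional $\Lambda$. To see that $\Lambda$ is the functional induced by $f$, I would fix $g\in T_2^{p,\infty}(\mathbb{R}_{+}^{n+1})$, note that the majorant $H:=\lf(\sum_{j}|\lambda_j|\,|A_j|\r)|g|$ satisfies $\int_{\mathbb{R}_{+}^{n+1}}H\,\frac{dxdt}{t}\le\sum_{j}|\lambda_j|\,\|g\|_{T_2^{p,\infty}}<\infty$ by the atom estimate above and dominates every partial sum $|(\sum_{j=0}^N\lambda_jA_j)g|$, and then apply the dominated convergence theorem together with $f=\sum_{j}\lambda_jA_j$ a.e. to obtain
$$
\int_{\mathbb{R}_{+}^{n+1}}f(x,t)g(x,t)\,\frac{dxdt}{t}=\sum_{j=0}^\infty\lambda_j\int_{\mathbb{R}_{+}^{n+1}}A_j(x,t)g(x,t)\,\frac{dxdt}{t}=\Lambda(g).
$$
Hence, under the canonical identification of $T_2^p(\mathbb{R}_{+}^{n+1})$-functions with elements of $(T_2^{p,\infty}(\mathbb{R}_{+}^{n+1}))^{*}$, we have $f=\sum_{j}\lambda_jA_j$ in $(T_2^{p,\infty}(\mathbb{R}_{+}^{n+1}))^{*}$; by Definition \ref{d2.19} this gives $f\in\widetilde{{T}_{2}^p}(\mathbb{R}_{+}^{n+1})$ and $\|f\|_{\widetilde{{T}_{2}^p}}\le\lf(\sum_{j}|\lambda_j|^p\r)^{1/p}\ls\|f\|_{T_2^p}$.

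The step I expect to be the main obstacle is the identification in the last paragraph: one must make sure that the pairing integral $\int_{\mathbb{R}_{+}^{n+1}}fg\,\frac{dxdt}{t}$ is absolutely convergent for every $g\in T_2^{p,\infty}(\mathbb{R}_{+}^{n+1})$ and can be evaluated term by term along the atomic decomposition, so that the abstract $(T_2^{p,\infty}(\mathbb{R}_{+}^{n+1}))^{*}$-limit of the series is genuinely $f$ rather than merely some functional; this is exactly what the majorant $H$ and the dominated convergence theorem are for. Everything else---the Cauchy--Schwarz inequality, the size condition of $T_2^p$-atoms, and the embedding $\ell^p\hookrightarrow\ell^1$ for $p\le1$---is routine.
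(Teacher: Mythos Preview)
Your proof is correct. The paper does not actually prove Lemma \ref{l2.20}; it simply states that the result is a special case of \cite[Lemma 4.1]{jy112} and moves on. Your argument supplies a self-contained proof via the expected route: apply the atomic decomposition of Lemma \ref{l2.18}, check via Cauchy--Schwarz that each $T_2^p$-atom is a functional of norm at most $1$ on $T_2^{p,\infty}(\mathbb{R}_{+}^{n+1})$, use $\ell^p\hookrightarrow\ell^1$ for $p\le1$ to get absolute convergence of the series in the Banach space $(T_2^{p,\infty}(\mathbb{R}_{+}^{n+1}))^{*}$, and then identify the limit with $f$ through dominated convergence and the a.e.\ convergence given by Lemma \ref{l2.18}. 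This is precisely the standard argument underlying the cited lemma in \cite{jy112}, so there is no genuine difference in approach---you have simply written out what the paper leaves to a reference.
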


\begin{lemma}\label{l2.21}
Let $p\in(\frac{n}{n+\alpha},1]$ and $M,M_1\in \mathbb{N}$ with $M\geq 2M_1$. Suppose that $A$ is a
$T_{2}^{p}$-atom supported in $\widehat{B}$ with some ball $B \subseteq \mathbb{R}^{n}$. Then there is a positive constant $c_{(M,M_1)}$ such that the function
$$
c_{(M,M_1)} \int_{0}^{\infty}\lf(t^\alpha L_\alpha\r)^M e^{-t^\alpha L_\alpha}\lf(I-e^{-t^\alpha
L_\alpha}\r)^{2M_1+1}A(x, t)\,\frac{d t}{t}
$$	
define a multiple of $(p, 2, M, \epsilon)_{L_{\alpha}}$-molecule associated to the ball $B$ with
$\epsilon=\alpha+d-\frac{n}{p}$.
\end{lemma}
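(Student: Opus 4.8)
The plan is to write the function $b$ of Definition \ref{d2.5} down explicitly and then verify (i) and (ii) by cutting $\rn$ into the annuli $S_j(B)$. First I would set $\psi(\lambda):=\lambda^{M}e^{-\lambda}(1-e^{-\lambda})^{2M_1+1}$ (so the operator in the statement is $\psi(t^\alpha L_\alpha)$) and define
$$
b:=\int_{0}^{\infty}t^{\alpha M}e^{-t^\alpha L_\alpha}\lf(I-e^{-t^\alpha L_\alpha}\r)^{2M_1+1}A(\cdot,t)\,\frac{dt}{t},
$$
so that the function in the statement equals $c_{(M,M_1)}L_\alpha^{M}b$. It then suffices to show that $b$ obeys the size bounds of Definition \ref{d2.5}(ii) up to a fixed constant with $\epsilon=\alpha+n-\tfrac np$ (reading $d=n$); then $m:=c_{(M,M_1)}L_\alpha^{M}b$ is a fixed multiple of a $(p,2,M,\epsilon)_{L_\alpha}$-molecule associated to $B$, giving (i) as well. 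Since $A$ is supported in $\widehat B$ and $(y,t)\in\widehat B$ forces $B(y,t)\subset B$, we have $A(\cdot,t)\equiv0$ for $t>r_B$ and $\supp A(\cdot,t)\subset B$ for $t\in(0,r_B]$, and the $T_2^{p}$-atom condition becomes $\int_{0}^{r_B}\|A(\cdot,t)\|_{L^{2}(B)}^{2}\,\frac{dt}{t}\le|B|^{1-2/p}$. Applying $(r_B^\alpha L_\alpha)^{k}$ under the integral (legitimate for $0\le k\le M$ by closedness of $L_\alpha^{k}$ once the $L^2$ bounds below are available) gives
$$
(r_B^\alpha L_\alpha)^{k}b=\int_{0}^{r_B}r_B^{\alpha k}t^{\alpha(M-k)}\lf(t^\alpha L_\alpha\r)^{k}e^{-t^\alpha L_\alpha}\lf(I-e^{-t^\alpha L_\alpha}\r)^{2M_1+1}A(\cdot,t)\,\frac{dt}{t},
$$
so the goal is, for $k=0,\dots,M$ and $j=0,1,2,\dots$ (with $S_0(B)=B$), to prove $\|(r_B^\alpha L_\alpha)^{k}b\|_{L^{2}(S_j(B))}\ls 2^{-j\epsilon}r_B^{\alpha M}|2^{j}B|^{\frac12-\frac1p}$.

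For the far annuli $j\ge1$ I would expand $(I-e^{-t^\alpha L_\alpha})^{2M_1+1}=\sum_{l=0}^{2M_1+1}\binom{2M_1+1}{l}(-1)^{l}e^{-lt^\alpha L_\alpha}$ and note that, by Proposition \ref{p2.3} (after rescaling the semigroup time by $l+1$, which only changes constants because $D_\sigma(x,(l+1)t^\alpha)\ls_l D_\sigma(x,t^\alpha)$ and the polynomial factor is controlled similarly), each $(t^\alpha L_\alpha)^{k}e^{-(l+1)t^\alpha L_\alpha}$ has kernel dominated by $C t^{-n}\big(\tfrac{t+|x-y|}{t}\big)^{-n-\alpha}D_\sigma(x,t^\alpha)D_\sigma(y,t^\alpha)$, which is exactly the hypothesis of Theorem \ref{t2.2} with heat-time $t^\alpha$. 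Since $\sigma\le\tfrac{n-\alpha}{2}$ gives $2\in(n_\sigma',n_\sigma)$, the first estimate of Theorem \ref{t2.2} with $p=q=2$ applies to $A(\cdot,t)$; using $t\le r_B$ to reduce $\max\{(r_B/t)^{n/2},(r_B/t)^{n}\}=(r_B/t)^{n}$, $(1+t/(2^{j}r_B))^{n/2}\ls1$ and $(1+2^{j}r_B/t)^{-n-\alpha}\le(t/(2^{j}r_B))^{n+\alpha}$, it yields
$$
\|(t^\alpha L_\alpha)^{k}e^{-(l+1)t^\alpha L_\alpha}A(\cdot,t)\|_{L^{2}(S_j(B))}\ls 2^{-j(\frac n2+\alpha)}\lf(\frac{t}{r_B}\r)^{\alpha}\|A(\cdot,t)\|_{L^{2}(B)}.
$$
Summing in $l$, then using Minkowski's integral inequality in $t$ and Cauchy--Schwarz against $\int_0^{r_B}\|A(\cdot,t)\|_{L^2(B)}^2\frac{dt}{t}\le|B|^{1-2/p}$ (the power $t^{\alpha(M-k+1)}$ is integrable near $0$ because $k\le M$), one gets $\|(r_B^\alpha L_\alpha)^{k}b\|_{L^{2}(S_j(B))}\ls 2^{-j(\frac n2+\alpha)}r_B^{\alpha M}|B|^{\frac12-\frac1p}$. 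Because $|2^{j}B|^{\frac12-\frac1p}=2^{jn(\frac12-\frac1p)}|B|^{\frac12-\frac1p}$ and $\epsilon=\alpha+n-\tfrac np$ satisfies $\tfrac n2+\alpha=\epsilon+\tfrac np-\tfrac n2$, this is precisely $\ls 2^{-j\epsilon}r_B^{\alpha M}|2^{j}B|^{\frac12-\frac1p}$; note the $j$-exponents match exactly, so the full $(n+\alpha)$-power decay of Theorem \ref{t2.2} is indispensable here.

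For $j=0$ and $k<M$ I would just use the uniform bound $\|\psi_{k}(t^\alpha L_\alpha)\|_{L^{2}\to L^{2}}\ls1$ with $\psi_{k}(\lambda):=\lambda^{k}e^{-\lambda}(1-e^{-\lambda})^{2M_1+1}$, plus Cauchy--Schwarz in $t$ on the global $L^2$ norm; the $t$-integral converges since $M-k\ge1$, giving $\|(r_B^\alpha L_\alpha)^{k}b\|_{L^{2}(B)}\ls r_B^{\alpha M}|B|^{\frac12-\frac1p}$. The case $k=M$, $j=0$ is the one step where the pointwise kernel bound is insufficient (the exponent $t^{0}$ fails to be integrable at $0$), and I would argue by duality and the spectral theorem: for $h\in L^{2}(\rn)$ with $\|h\|_{2}=1$, self-adjointness of $L_\alpha$ yields
$$
\lf|\langle L_\alpha^{M}b,h\rangle\r|=\lf|\int_{0}^{r_B}\!\!\int_{B}A(x,t)\,\overline{\psi(t^\alpha L_\alpha)h(x)}\,\frac{dx\,dt}{t}\r|\le\lf(\int_{0}^{r_B}\!\!\int_{B}|A|^{2}\,\frac{dx\,dt}{t}\r)^{\!\frac12}\lf(\int_{0}^{\infty}\!\!\int_{\rn}|\psi(t^\alpha L_\alpha)h|^{2}\,\frac{dx\,dt}{t}\r)^{\!\frac12}\!,
$$
where the first factor is $\le|B|^{\frac12-\frac1p}$ and, since $\int_{0}^{\infty}|\psi(s)|^{2}\frac{ds}{s}<\infty$, the spectral theorem identifies the second factor with $\big(\int_{0}^{\infty}|\psi(s)|^{2}\frac{ds}{s}\big)^{1/2}\|h\|_{2}\ls1$; hence $\|(r_B^\alpha L_\alpha)^{M}b\|_{L^{2}(B)}=r_B^{\alpha M}\|L_\alpha^{M}b\|_{L^{2}(B)}\ls r_B^{\alpha M}|B|^{\frac12-\frac1p}$ (this also shows $L_\alpha^{M}b\in L^{2}(\rn)$, hence $b\in D(L_\alpha^{M})$). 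Combining the two ranges of $j$ finishes the verification, and absorbing constants shows $m$ is a fixed multiple of a $(p,2,M,\epsilon)_{L_\alpha}$-molecule associated to $B$. The main obstacle is the tight exponent bookkeeping in the $j\ge1$ estimate together with this $j=0$, $k=M$ endpoint; checking the kernel bound for $(t^\alpha L_\alpha)^{k}e^{-(l+1)t^\alpha L_\alpha}$ from Proposition \ref{p2.3} is routine.
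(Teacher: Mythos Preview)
Your proof is correct and uses the same key ingredients as the paper (the $T_2^p$-atom bound together with the off-diagonal estimates of Theorem \ref{t2.2} coming from Proposition \ref{p2.3}), but organizes them slightly differently. The paper proceeds by duality for \emph{every} annulus $S_j(B)$: it pairs $m$ (and similarly $(r_B^\alpha L_\alpha)^k b$) with a test function $f$ supported in $S_j(B)$, moves the whole operator $(t^\alpha L_\alpha)^M e^{-t^\alpha L_\alpha}(I-e^{-t^\alpha L_\alpha})^{2M_1+1}$ onto $f$ by self-adjointness, applies Cauchy--Schwarz over $\widehat B$ to peel off the atom, and then uses the \emph{second} estimate of Theorem \ref{t2.2} (function supported in $S_j(B)$, measured on $B$). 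You instead use the \emph{first} estimate of Theorem \ref{t2.2} directly on $A(\cdot,t)$ for $j\ge1$, combine via Minkowski and Cauchy--Schwarz in $t$, and only invoke duality plus the spectral theorem at the single endpoint $j=0$, $k=M$ where the $t$-integral would otherwise diverge. The paper's uniform-duality route is a bit cleaner (no case split and no separate square-function argument), while your version is more hands-on and makes the role of the extra $(t/r_B)^\alpha$ factor from the off-diagonal decay in rescuing the $t$-integrability completely explicit. Either way the exponent bookkeeping $2^{-j(n/2+\alpha)}=2^{-j\epsilon}2^{jn(1/2-1/p)}$ is the crux, and you have it right.
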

\begin{proof}
Let $A$ be a $T_2^p$-atom supported in $\widehat{B}$ with some ball $B$. Then we have
\begin{equation}\label{2.4}
\int_{\widehat{B}}|A(x, t)|^{2} \frac{d x d t}{t} \leq|B|^{1-\frac{2}{p}}.
\end{equation}
Now, let
\begin{align*}
m(x)&:=\int_0^\infty \lf(t^\alpha L_\alpha\r)^M e^{-t^\alpha L_\alpha}\lf(I-e^{-t^\alpha L_\alpha}\r)^{2M_1+1}A(x, t) \frac{d t}{t}\\
&=L_\alpha^M\int_0^\infty t^{\alpha M}e^{-t^\alpha L_\alpha}\lf(I-e^{-t^\alpha L_\alpha}\r)^{2M_1+1}A(x, t) \frac{d t}{t}.
\end{align*}
Then $m=L^M_\alpha b$, where
\begin{align*}
b:=\int_0^\infty t^{\alpha M}e^{-t^\alpha L_\alpha}\lf(I-e^{-t^\alpha L_\alpha}\r)^{2M_1+1}A(x, t) \frac{d t}{t}.
\end{align*}
Moreover, for a fixed $i\in \mathbb{N}$, let $f\in L^2(S_i(B))$ satisfies that $\supp (f)\subset S_i(B)$
and $\|f\|_{L^2(S_i(B))}=1$. By the H\"older inequality and \eqref{2.4}, we have
\begin{align*}
\int_{\mathbb{R}^{n}}m(x)\overline{f(x)}dx&=\int_{\mathbb{R}^{n}}\int_{0}^\infty \lf(t^\alpha L_\alpha\r)^M e^{-t^\alpha L_\alpha}\lf(I-e^{-t^\alpha L_\alpha}\r)^{2M_1+1}A(x, t) \overline{f(x)}\frac{d t}{t}dx\\
&\le \int_{\widehat{B}}\lf|A(x,t)\lf(t^\alpha L_\alpha\r)^M e^{-t^\alpha L_\alpha}
\lf(I-e^{-t^\alpha L_\alpha}\r)^{2M_1+1}\overline{f(x)}\r|\frac{d xdt}{t}\\
&\le \lf(\int_{\widehat{B}}|A(x,t)|^2\frac{d t}{t}dx\r)^\frac{1}{2}\\
&\hs\times\lf(\int_{\widehat{B}}\lf|\lf(t^\alpha L_\alpha\r)^M e^{-t^\alpha L_\alpha}\lf(I-
e^{-t^\alpha L_\alpha}\r)^{2M_1+1}\overline{f(x)}\r|^2\frac{d xdt}{t}\r)^\frac{1}{2}\\
&\le |B|^{\frac{1}{2}-\frac{1}{p}}\lf(\int_{\widehat{B}}\lf|\lf(t^\alpha L_\alpha\r)^M e^{-t^\alpha L_\alpha}\lf(I-e^{-t^\alpha L_\alpha}\r)^{2M_1+1}
\overline{f(x)}\r|^2\frac{d xdt}{t}\r)^{\frac{1}{2}}.
\end{align*}
It follows from Proposition \ref{p2.3} and Theorem \ref{t2.2} that
\begin{align*}
&\lf(\int_{\widehat{B}}\lf|\lf(t^\alpha L_\alpha\r)^M e^{-t^\alpha L_\alpha}\lf(I-e^{-t^\alpha L_\alpha}\r)^{2M_1+1}
\overline{f(x)}\r|^2\frac{d xdt}{t}\r)^{\frac{1}{2}}\\
&\quad\ls \sum_{j=0}^{2M_1+1}\lf(\int_{\widehat{B}}\lf|\lf(t^\alpha L_\alpha\r)^M
e^{-t^\alpha L_\alpha}e^{-jt^\alpha L_\alpha}\overline{f(x)}\r|^2\frac{d xdt}{t}\r)^{\frac{1}{2}}\\
&\quad\ls c_{(M,M_1)}\lf(\int_{0}^{r_B}\lf(\frac{2^ir_B}{t}\r)^{2n}\lf(\frac{2^ir_B}{t}\r)
^{-2n-2\alpha}\frac{dt}{t}\r)^\frac{1}{2}2^{-\frac{in}{2}}\|f\|_{L^2(S_i(B))}\\
&\quad\ls c_{(M,M_1)}2^{-i(\alpha+\frac{n}{2})},
\end{align*}
which, together with the above estimate, implies that
\begin{align*}
\int_{\mathbb{R}^{n}}m(x)\overline{f(x)}\,dx\le c_{(M,M_1)}2^{-i(\alpha+n-\frac{n}{p})}|2^iB|
^{\frac{1}{2}-\frac{1}{p}}.
\end{align*}
Taking supremum over all $f$ such that $\|f\|_{L^2(2^iB)}=1$, we obtain
\begin{align*}
\|m\|_{L^2(S_i(B))}\le c_{(M,M_1)}2^{-i(\alpha+n-\frac{n}{p})}|2^iB|^{\frac{1}{2}-\frac{1}{p}}.
\end{align*}
Similarly, we have
\begin{align*}
\|\lf(r_B^\alpha L_\alpha\r)^kb\|_{L^2(2^iB)}\le c_{(M,M_1)}r_B^{\alpha M}2^{-i\ez}|2^iB|
^{\frac{1}{2}-\frac{1}{p}}
\end{align*}
for all integer $k$ such that $k\le M$, where $\ez=\alpha+n-\frac{n}{p}>0$.
This finishes the proof of Lemma \ref{l2.21}.
	\end{proof}
	
\section{The VMO-type space $\mathrm{VMO}_{L_\alpha,M}^\gamma(\rn)$}
In this section, we study spaces of functions with vanishing mean oscillation associated with the operator
$L_\alpha$. We begin with some notions and notation.	
\begin{definition}\label{d3.1}
Let $M \in \mathbb{N}$ and $\gamma\in[0,\frac{\alpha}{n})$. A function $f \in \mathrm{BMO}_{L_{\alpha}, M}^{\gamma}(\mathbb{R}^{n})$
is said to be in $\mathrm{VMO}_{L_{\alpha}, M}^{\gamma}(\mathbb{R}^{n})$, if it satisfies the limiting
conditions $\gamma_{1}(f)=\gamma_{2}(f)=$ $\gamma_{3}(f)=0$, where	
$$
\begin{aligned}
& \gamma_{1}(f) := \lim _{c \rightarrow 0} \sup _{\text {ball } B: r_{B} \leq c}
\left(\frac{1}{|B|^{1+2\gamma}} \int_{B}\left|\lf(I-e^{-r_B^\alpha L_\alpha}\r)^{M+1}f(x)\right|^{2} d x\right)^{1 / 2}, \\
& \gamma_{2}(f) := \lim _{c \rightarrow \infty} \sup _{\text {ball } B: r_{B} \geq c}
\left(\frac{1}{|B|^{1+2\gamma}} \int_{B}\left|\lf(I-e^{-r_B^\alpha L_\alpha}\r)^{M+1}f(x)\right|^{2} d x\right)^{1 / 2}
\end{aligned}$$	
and	
$$
\gamma_{3}(f) := \lim _{c \rightarrow \infty} \sup _{\text {ball } B \subset[B(0, c)]^{\complement}}
\left(\frac{1}{|B|^{1+2\gamma}} \int_{B}\left|\lf(I-e^{-r_B^\alpha L_\alpha}\r)^{M+1}f(x)\right|^{2}\, d x\right)^{1 / 2} .
$$
For any function $f\in\mathrm{VMO}_{L_{\alpha},M}^{\gamma}(\mathbb{R}^{n})$, we define
$\|f\|_{\mathrm{VMO}_{L_{\alpha}, M}^{\gamma}(\rn)} :=\|f\|_{\mathrm{BMO}_{L_{\alpha}, M}^{\gamma}(\rn)}$.
\end{definition}
	
In what follows, let $T_{2, 0}^{p,\infty}(\mathbb{R}_{+}^{n+1})$ be the set of all $f \in T_{2}^{p,\infty}
(\mathbb{R}_{+}^{n+1})$ satisfying $\eta_{1}(f)=$ $\eta_{2}(f)=\eta_{3}(f)=0$, where
$$
\eta_{1}(f) := \lim _{c \rightarrow 0} \sup _{\text {ball } B: r_B \leq c} \frac{1}{|B|^{\frac{1}{p}-1}}
\left(\frac{1}{|B|} \int_{\widehat{B}}|f(y, t)|^{2} \frac{d y d t}{t}\right)^{1 / 2},$$
$$
\eta_{2}(f) := \lim _{c \rightarrow \infty} \sup _{\text {ball } B: r_{B} \geq c}
\frac{1}{|B|^{\frac{1}{p}-1}}\left(\frac{1}{|B|} \int_{\widehat{B}}|f(y, t)|^{2} \frac{d y d t}{t}\right)^{1 / 2}
$$
and
$$
\eta_{3}(f) := \lim _{c \rightarrow \infty}  \sup _{\text {ball }B \subset[B(0, c)]^{\complement}}
\frac{1}{|B|^{\frac{1}{p}-1}}\left(\frac{1}{|B|} \int_{\widehat{B}}|f(y, t)|^{2} \frac{d y d t}{t}\right)^{1 / 2} .
$$
Obviously, $T_{2, 0}^{p,\infty}(\mathbb{R}_{+}^{n+1})$ is a closed linear subspace of $T_{2}^{p,\infty}
(\mathbb{R}_{+}^{n+1})$.
	
Denote by $T_{2, 1}^{p,\infty}(\mathbb{R}_{+}^{n+1})$ the space of all $f \in T_{2}^{p,\infty}
(\mathbb{R}_{+}^{n+1})$ with $\eta_{1}(f)=0$, and $T_{2, c}^{2}(\mathbb{R}_{+}^{n+1})$ the space of all
$f \in T_{2}^{2}(\mathbb{R}_{+}^{n+1})$ with compact support. From \cite[P.\,10]{jy112}, we deduce that
$T_{2, c}^{2}(\mathbb{R}_{+}^{n+1}) \subset T_{2, 0}^{p,\infty}(\mathbb{R}_{+}^{n+1}) \subset
T_{2, 1}^{p,\infty}(\mathbb{R}_{+}^{n+1})$. Moreover, let $T_{2, c}^{p,\infty}(\mathbb{R}_{+}^{n+1})$ denote
the set of all $f \in T_{2}^{p,\infty}(\mathbb{R}_{+}^{n+1})$ with compact support, then $T_{2, c}^{p,\infty}
(\mathbb{R}_{+}^{n+1})$ coincides with $T_{2, c}^{2}(\mathbb{R}_{+}^{n+1})$; see also {\cite[Page 10]{jy112}}.
Define $T_{2, v}^{p,\infty}(\mathbb{R}_{+}^{n+1})$ to be the closure of $T_{2, c}^{p,\infty}(\mathbb{R}_{+}^{n+1})$ in
$T_{2, 1}^{p,\infty}(\mathbb{R}_{+}^{n+1})$.

The following Proposition \ref{p3.2} and Theorem \ref{t3.3} are the corollaries of
\cite[Proposition 3.1 and Theorem 4.2]{jy112}, respectively.
\begin{proposition}\label{p3.2}
Let $T_{2, v}^{p,\infty}(\mathbb{R}_{+}^{n+1})$ and $T_{2, 0}^{p,\infty}(\mathbb{R}_{+}^{n+1})$ be
defined as above. Then $T_{2, v}^{p,\infty}(\mathbb{R}_{+}^{n+1})$ and $T_{2, 0}^{p,\infty}(\mathbb{R}_{+}^{n+1})$
coincide with equivalent norms.
\end{proposition}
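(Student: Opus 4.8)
The plan is to notice first that the tent spaces $T_2^{p}(\mathbb{R}_{+}^{n+1})$, $T_2^{p,\infty}(\mathbb{R}_{+}^{n+1})$ and the subspaces $T_{2,0}^{p,\infty}$, $T_{2,1}^{p,\infty}$, $T_{2,c}^{p,\infty}$, $T_{2,v}^{p,\infty}$ are defined entirely on the upper half space and involve no property of $L_\alpha$ whatsoever, so that Proposition \ref{p3.2} is precisely the specialization of \cite[Proposition 3.1]{jy112} to the Orlicz function $t\mapsto t^{p}$; I would therefore simply transcribe that argument in the present notation. Since the (quasi-)norm on each of $T_{2,0}^{p,\infty}$, $T_{2,1}^{p,\infty}$ and $T_{2,v}^{p,\infty}$ is the restriction of $\|C_p(\cdot)\|_\infty$, it suffices to prove the set-theoretic equality $T_{2,v}^{p,\infty}(\mathbb{R}_{+}^{n+1})=T_{2,0}^{p,\infty}(\mathbb{R}_{+}^{n+1})$, after which the norm comparability is automatic. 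For the inclusion $T_{2,v}^{p,\infty}\subset T_{2,0}^{p,\infty}$ I would use only soft arguments: $T_{2,0}^{p,\infty}(\mathbb{R}_{+}^{n+1})$ is closed in $T_2^{p,\infty}(\mathbb{R}_{+}^{n+1})$, hence closed in the subspace $T_{2,1}^{p,\infty}(\mathbb{R}_{+}^{n+1})$, and it contains $T_{2,c}^{p,\infty}(\mathbb{R}_{+}^{n+1})=T_{2,c}^{2}(\mathbb{R}_{+}^{n+1})$; therefore the closure of $T_{2,c}^{p,\infty}$ in $T_{2,1}^{p,\infty}$, which by definition is $T_{2,v}^{p,\infty}$, lies in $T_{2,0}^{p,\infty}$.

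For the reverse inclusion, fix $f\in T_{2,0}^{p,\infty}(\mathbb{R}_{+}^{n+1})$ and approximate it by the truncations
$$
f_k(y,t):=f(y,t)\,\chi_{E_k}(y,t),\qquad E_k:=\lf\{(y,t)\in\mathbb{R}_{+}^{n+1}:\ 1/k\le t\le k,\ |y|\le k\r\},\quad k\in\mathbb{N}.
$$
A short covering argument --- cover $E_k$, and more generally any bounded subset of $\mathbb{R}_{+}^{n+1}$ on which $t$ is bounded away from $0$, by finitely many tents $\widehat{B_i}$ and use $C_p(f)\in L^{\fz}$ to get $\int_{\widehat{B_i}}|f|^2\,\frac{dydt}{t}\le|B_i|^{2/p-1}\|C_p(f)\|_\infty^2$ --- shows that $\int_{E_k}|f|^2\,\frac{dydt}{t}<\fz$, so $f_k\in T_{2,c}^{2}(\mathbb{R}_{+}^{n+1})=T_{2,c}^{p,\infty}(\mathbb{R}_{+}^{n+1})$. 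Hence it remains to prove that $\|f-f_k\|_{T_2^{p,\infty}}=\|C_p(f-f_k)\|_\infty\to 0$ as $k\to\fz$, which then puts $f$ in $T_{2,v}^{p,\infty}$. Here $f-f_k=f\,\chi_{E_k^{\com}}$, so $|f-f_k|\le|f|$ everywhere and $\int_{\widehat B}|f-f_k|^2\,\frac{dydt}{t}=\int_{\widehat B\setminus E_k}|f|^2\,\frac{dydt}{t}$ for every ball $B$.

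Given $\varepsilon\in(0,\fz)$ I would split the balls $B\subset\rn$ into four regimes. Using $\eta_1(f)=0$ pick $\dz>0$ so that the corresponding supremum over balls with $r_B\le\dz$ is below $\varepsilon$; using $\eta_2(f)=0$ pick $R>0$ so that the supremum over $r_B\ge R$ is below $\varepsilon$; using $\eta_3(f)=0$ pick $\rho>0$ so that the supremum over balls $B\subset[B(0,\rho)]^{\com}$ is below $\varepsilon$. Because $|f-f_k|\le|f|$, the first three regimes are controlled by these three quantities for \emph{every} $k$. The remaining \emph{middle regime} consists of balls $B$ with $\dz<r_B<R$ that meet $B(0,\rho)$; for all such $B$ one has $\widehat B\subset Q:=B(0,\rho+2R)\times(0,R]$, and once $k\ge\rho+2R$ also $Q\cap\{t\ge 1/k\}\subset E_k$, whence $\widehat B\setminus E_k\subset Q\cap\{t<1/k\}$ and $|B|\gs\dz^{n}$. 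Thus on the middle regime
$$
\frac{1}{|B|^{\frac{1}{p}-\frac{1}{2}}}\lf(\int_{\widehat B\setminus E_k}|f(y,t)|^2\,\frac{dydt}{t}\r)^{\frac{1}{2}}\ls\dz^{-n\lf(\frac{1}{p}-\frac{1}{2}\r)}\lf(\int_{Q\cap\{t<1/k\}}|f(y,t)|^2\,\frac{dydt}{t}\r)^{\frac{1}{2}}.
$$
By the covering argument above, $|f|^2\,\frac{dydt}{t}$ has finite total mass on the fixed box $Q$, so by the dominated convergence theorem the right-hand side tends to $0$ as $k\to\fz$, uniformly over the middle regime. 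Consequently $\limsup_{k\to\fz}\|C_p(f-f_k)\|_\infty\ls\varepsilon$, and letting $\varepsilon\to0$ gives $\|f-f_k\|_{T_2^{p,\infty}}\to0$, i.e., $f\in T_{2,v}^{p,\infty}(\mathbb{R}_{+}^{n+1})$.

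Combining the two inclusions proves Proposition \ref{p3.2}. I expect the one genuinely delicate point --- and the crux of the argument --- to be the uniform control over the middle regime: one must verify that the relevant part of every middle tent is trapped inside a \emph{single} compact box $Q$ depending only on $\dz,R,\rho$ and not on the individual ball $B$, so that a single application of dominated convergence to the finite measure $|f|^2\,\frac{dydt}{t}$ on $Q$ suffices. Everything else is the routine bookkeeping of matching the three truncation parameters of $E_k$ to the three vanishing quantities $\eta_1,\eta_2,\eta_3$.
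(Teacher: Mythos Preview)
Your proposal is correct and aligns with the paper's treatment: the paper does not actually prove Proposition~\ref{p3.2} but simply records it as a corollary of \cite[Proposition~3.1]{jy112}, exactly as you observe at the outset. Your sketch is a faithful specialization of that argument to the power function case, and the details you supply (the four-regime splitting, the containment $\widehat{B}\subset Q$ on the middle regime, and the use of dominated convergence on the finite measure $|f|^2\,\frac{dy\,dt}{t}$ restricted to a single fixed box) are sound.
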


\begin{theorem}\label{t3.3}
The space $(T_{2, v}^{p,\infty}(\mathbb{R}_{+}^{n+1}))^{*}$, the dual space of the space
$T_{2, v}^{p,\infty}(\mathbb{R}_{+}^{n+1})$, coincides with $\widetilde{{T}_{2}^p}(\mathbb{R}_{+}^{n+1})$
in the following sense:
\begin{itemize}
\item [{\rm (i)}]For any given $g \in \widetilde{{T}_{2}^p}(\mathbb{R}_{+}^{n+1})$, define the linear functional $\ell$ by setting, for all $f \in$ $T_{2, v}^{p,\infty}(\mathbb{R}_{+}^{n+1})$,    	
\begin{equation}\label{3.1}
\ell(f):= \int_{\mathbb{R}_{+}^{n+1}} f(x, t) g(x, t) \frac{d x d t}{t} .
\end{equation}
Then there exists a positive constant $C$, independent of $g$, such that
$$
\|\ell\|_{\left(T_{2, v}^{p,\infty}\right)^{*}} \leq C\|g\|_{\widetilde{{T}_{2}^p}} .
$$
\item [{\rm (ii)}]Conversely, for any $\ell \in(T_{2, v}^{p,\infty}(\mathbb{R}_{+}^{n+1}))^{*}$,
there exists $g \in \widetilde{{T}_{2}^p}(\mathbb{R}_{+}^{n+1})$ such that \eqref{3.1} holds for all
$f \in T_{2, v}^{p,\infty}(\mathbb{R}_{+}^{n+1})$ and $\|g\|_{\widetilde{{T}_{2}^p}} \leq
C\|\ell\|_{\left(T_{2, v}^{p,\infty}\right)^{*}}$, where $C$ is independent of $\ell$.
\end{itemize}
\end{theorem}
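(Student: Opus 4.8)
Since the statement concerns only the tent spaces over $\mathbb{R}_{+}^{n+1}$ and makes no reference to $L_{\alpha}$, it is the $L^{p}$-analogue of \cite[Theorem 4.2]{jy112}, and the plan is to reproduce that argument, the needed tools (Proposition \ref{p2.13}, Lemmas \ref{l2.18} and \ref{l2.20}, Definition \ref{d2.19}) being available. For part (i), given $g\in\widetilde{{T}_{2}^p}(\mathbb{R}_{+}^{n+1})$, fix an atomic representation $g=\sum_{j}\lambda_{j}a_{j}$, converging in $(T_{2}^{p,\infty}(\mathbb{R}_{+}^{n+1}))^{*}$, with $(\sum_{j}|\lambda_{j}|^{p})^{1/p}\le 2\|g\|_{\widetilde{{T}_{2}^p}}$. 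A $T_{2}^{p}$-atom $a_{j}$ supported in $\widehat{B_{j}}$ satisfies $\|a_{j}\|_{T_{2}^{p}}=\|A(a_{j})\|_{p}\le C$ for a dimensional constant $C$ (by Fubini and the H\"older inequality, using that $A(a_{j})$ is supported in $2B_{j}$ together with the size condition on $a_{j}$). Hence, for $f\in T_{2, v}^{p,\infty}(\mathbb{R}_{+}^{n+1})\subset T_{2}^{p,\infty}(\mathbb{R}_{+}^{n+1})$, Proposition \ref{p2.13}(iii) gives $|\int_{\mathbb{R}_{+}^{n+1}}f\,a_{j}\,\frac{dxdt}{t}|\le C\|f\|_{T_{2}^{p,\infty}}$, and since $p\le 1$ forces $\sum_{j}|\lambda_{j}|\le(\sum_{j}|\lambda_{j}|^{p})^{1/p}$, summing the absolutely convergent series shows $|\ell(f)|\ls\|f\|_{T_{2}^{p,\infty}}\|g\|_{\widetilde{{T}_{2}^p}}$; that is, \eqref{3.1} defines a bounded functional with $\|\ell\|_{(T_{2, v}^{p,\infty})^{*}}\ls\|g\|_{\widetilde{{T}_{2}^p}}$.

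For part (ii), let $\ell\in(T_{2, v}^{p,\infty}(\mathbb{R}_{+}^{n+1}))^{*}$. First I would produce a representing kernel. Recall $T_{2, c}^{p,\infty}(\mathbb{R}_{+}^{n+1})=T_{2, c}^{2}(\mathbb{R}_{+}^{n+1})\subset T_{2, v}^{p,\infty}(\mathbb{R}_{+}^{n+1})$, the last being the closure of the first. If $K\subset\mathbb{R}_{+}^{n+1}$ is compact, then $K\subset\mathbb{R}^{n}\times[a,b]$ with $0<a<b<\infty$ and $K$ spatially bounded, and every $F\in L^{2}(K,\frac{dxdt}{t})$ lies in $T_{2, c}^{2}(\mathbb{R}_{+}^{n+1})$ with $\|F\|_{T_{2}^{p,\infty}}\le C_{K}\|F\|_{L^{2}(K,dxdt/t)}$, since only tents over balls of radius $\gtrsim a$ meeting the spatial projection of $K$ contribute to $C_{p}(F)$ and these have measure in a bounded range. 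Thus $\ell$ restricts to a bounded linear functional on the Hilbert space $L^{2}(K,\frac{dxdt}{t})$; the Riesz representation theorem, applied along an exhaustion $\{K_{N}\}_{N}$ of $\mathbb{R}_{+}^{n+1}$ and followed by patching (uniqueness makes the pieces consistent), furnishes $g\in L^{2}_{\loc}(\mathbb{R}_{+}^{n+1},\frac{dxdt}{t})$ with $\ell(F)=\int_{\mathbb{R}_{+}^{n+1}}Fg\,\frac{dxdt}{t}$ for all $F\in T_{2, c}^{2}(\mathbb{R}_{+}^{n+1})$, hence, by density, for all $F\in T_{2, v}^{p,\infty}(\mathbb{R}_{+}^{n+1})$.

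The remaining step, which I expect to be the main obstacle, is to upgrade this kernel to membership $g\in\widetilde{{T}_{2}^p}(\mathbb{R}_{+}^{n+1})$ with $\|g\|_{\widetilde{{T}_{2}^p}}\ls\|\ell\|_{(T_{2, v}^{p,\infty})^{*}}$. The plan is to set $g_{N}:=g\,\chi_{K_{N}}\,\chi_{\{|g|\le N\}}$, so $g_{N}\in T_{2, c}^{2}(\mathbb{R}_{+}^{n+1})\subset T_{2}^{p}(\mathbb{R}_{+}^{n+1})$ and $g_{N}\to g$ in $L^{2}_{\loc}(\mathbb{R}_{+}^{n+1},\frac{dxdt}{t})$, and then to prove a bound $\|g_{N}\|_{\widetilde{{T}_{2}^p}}\ls\|\ell\|$ \emph{uniform} in $N$. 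For this, a form of the $T_{2}^{p}$--$T_{2}^{p,\infty}$ duality of Proposition \ref{p2.13}(iii), combined with Lemma \ref{l2.20} (so that $\widetilde{{T}_{2}^p}$ plays the role of the Banach completion of $T_{2}^{p}$), shows $\|g_{N}\|_{\widetilde{{T}_{2}^p}}$ to be comparable to $\sup\{|\int_{\mathbb{R}_{+}^{n+1}}g_{N}F\,\frac{dxdt}{t}|:F\in T_{2}^{p,\infty}(\mathbb{R}_{+}^{n+1}),\ \|F\|_{T_{2}^{p,\infty}}\le 1\}$; and for such an $F$ the truncation $\chi_{K_{N}}\chi_{\{|g|\le N\}}F$ again belongs to $T_{2, c}^{2}(\mathbb{R}_{+}^{n+1})\subset T_{2, v}^{p,\infty}(\mathbb{R}_{+}^{n+1})$ with $\|\chi_{K_{N}}\chi_{\{|g|\le N\}}F\|_{T_{2}^{p,\infty}}\le\|F\|_{T_{2}^{p,\infty}}\le 1$, because multiplication by an indicator does not increase any $C_{p}$-average, so that $|\int_{\mathbb{R}_{+}^{n+1}}g_{N}F\,\frac{dxdt}{t}|=|\ell(\chi_{K_{N}}\chi_{\{|g|\le N\}}F)|\le\|\ell\|$; this yields the uniform bound. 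Finally I would pass to the limit: applying Lemma \ref{l2.18} to each $g_{N}$ gives $\ell^{p}$-atomic representations with uniformly bounded coefficients, and a diagonal extraction together with the completeness of $\widetilde{{T}_{2}^p}(\mathbb{R}_{+}^{n+1})$ produces an element of $\widetilde{{T}_{2}^p}(\mathbb{R}_{+}^{n+1})$ which, tested against functions in $T_{2, c}^{2}(\mathbb{R}_{+}^{n+1})$, must coincide with $g$; hence $g\in\widetilde{{T}_{2}^p}(\mathbb{R}_{+}^{n+1})$ with $\|g\|_{\widetilde{{T}_{2}^p}}\le\liminf_{N}\|g_{N}\|_{\widetilde{{T}_{2}^p}}\ls\|\ell\|$, and $g$ realizes $\ell$ via \eqref{3.1}. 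The genuinely delicate points, handled as in \cite{cms85,jy112}, are the precise $T_{2}^{p}$--$T_{2}^{p,\infty}$ duality used to estimate $\|g_{N}\|_{\widetilde{{T}_{2}^p}}$ by the pairing supremum and the identification of the weak-$*$ limit with $g$.
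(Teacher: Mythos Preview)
Your proposal is correct and aligns with the paper's approach: the paper does not give an independent proof of Theorem~\ref{t3.3} but simply states that it is a corollary of \cite[Theorem 4.2]{jy112}, which is exactly what you recognized at the outset. Your sketch of the argument from \cite{jy112} (the atomic estimate for part (i), and Riesz representation plus a uniform $\widetilde{T_2^p}$-bound on truncations for part (ii)) is accurate and in fact goes beyond what the paper itself provides.
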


\begin{theorem}\label{t3.4}
Let $p\in(\frac{n}{n+\alpha},1]$. Let $M,M_0,M_1\in \mathbb{N}$ with $M_1 \geq M_{0} \geq\max
\{\frac{n+2\alpha}{2\alpha},\frac{n}{p\alpha}\}$ and $M \geq 2 M_{1}$. Then the following conclusions are equivalent:
\begin{itemize}
\item [{\rm (a)}]$f \in \mathrm{VMO}_{L_{\alpha}, M_0}^{\frac{1}{p}-1}(\mathbb{R}^{n})$,
			
\item [{\rm (b)}]$f \in \mathcal{M_\alpha}(\mathbb{R}^{n})$ and $\lf(t^\alpha L_\alpha\r)^M
e^{-t^\alpha L_\alpha}\lf(I-e^{-t^\alpha L_\alpha}\r)^{2M_1+1}f \in T_{2,v}^{p,\infty}(\mathbb{R}_{+}^{n+1})$.
\end{itemize}
\end{theorem}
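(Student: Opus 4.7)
The overall strategy is to show each direction by combining the Carleson measure characterization of $\mathrm{BMO}_{L_\alpha, M_0}^{\frac{1}{p}-1}(\rn)$ (Lemmas \ref{l2.15} and \ref{l2.16}) with a careful localization/off-diagonal analysis that turns the three vanishing conditions $\gamma_1,\gamma_2,\gamma_3$ in the definition of $\mathrm{VMO}_{L_\alpha,M_0}^{\frac{1}{p}-1}(\rn)$ into the corresponding conditions $\eta_1,\eta_2,\eta_3$ defining $T_{2,0}^{p,\infty}(\rr_+^{n+1})$, and vice versa. By Proposition \ref{p3.2}, it suffices to work with $T_{2,0}^{p,\infty}$.

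For (a)$\Rightarrow$(b), the membership $f\in \mathcal{M}_\alpha(\rn)$ comes for free from $f\in\mathrm{BMO}_{L_\alpha,M_0}^{\frac{1}{p}-1}(\rn)$, and Lemma \ref{l2.15} (applied with $M_1$ replaced by $2M_1$, noting $M\ge 2M_1$ and $2M_1\ge M_0$) places $F(x,t):=(t^\alpha L_\alpha)^M e^{-t^\alpha L_\alpha}(I-e^{-t^\alpha L_\alpha})^{2M_1+1}f$ in $T_2^{p,\infty}(\rr_+^{n+1})$. To upgrade this to $T_{2,v}^{p,\infty}=T_{2,0}^{p,\infty}$ I would, for each fixed ball $B$, split
\[
f=(I-e^{-r_B^\alpha L_\alpha})^{2M_1+1}f\;\;\text{applied to}\;\;f\chi_{4B}+\sum_{j\ge 2}f\chi_{S_j(B)},
\]
using that the operator expands into a finite binomial combination of $e^{-k r_B^\alpha L_\alpha}$, so that on $\widehat{B}$ one localizes $f$ geometrically around $B$. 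The local part over $4B$ is controlled by $\gamma_i(f)$ evaluated on balls of radius comparable to $r_B$, which tends to $0$ under the respective limit taken in the definitions of $\eta_1,\eta_2,\eta_3$. The tail pieces over $S_j(B)$ are estimated by Theorem \ref{t2.2} and Proposition \ref{p2.3}, which (thanks to the $(1+2^j r_B/t^{1/\alpha})^{-n-\alpha}$ factor and the $D_\sigma$ control) produce a uniform bound $C 2^{-j\epsilon}\|f\|_{\mathrm{BMO}_{L_\alpha,M_0}^{1/p-1}(\rn)}|B|^{1/p-1}$, summable in $j$ and contributing a quantity that tends to $0$ when the same limiting parameter goes to its extreme because $f\in\mathrm{VMO}$.

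For (b)$\Rightarrow$(a), Lemma \ref{l2.16} already gives $f\in\mathrm{BMO}_{L_\alpha,M_0}^{\frac{1}{p}-1}(\rn)$, so only the three vanishing assertions $\gamma_i(f)=0$ remain. The key is a Calder\'on-type reproducing identity: using the self-adjoint $H^\infty$-calculus, for a suitable constant $c_{M,M_1}$ one has
\[
(I-e^{-r_B^\alpha L_\alpha})^{M_0+1} f = c_{M,M_1}\!\int_0^\infty \!\Phi_{r_B}(t,L_\alpha)\,(t^\alpha L_\alpha)^M e^{-t^\alpha L_\alpha}(I-e^{-t^\alpha L_\alpha})^{2M_1+1}f\,\frac{dt}{t},
\]
where $\Phi_{r_B}(t,\lambda)$ is a bounded Borel function built from $(1-e^{-r_B^\alpha\lambda})^{M_0+1}$ divided by $(t^\alpha\lambda)^M(1-e^{-t^\alpha\lambda})^{2M_1+1}$ times a bump that provides the resolution of identity; the choice $M\ge 2M_1\ge 2M_0$ guarantees the kernels of $\Phi_{r_B}(t,L_\alpha)$ enjoy the pointwise decay of Proposition \ref{p2.3}. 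Plugging this into $\int_B |(I-e^{-r_B^\alpha L_\alpha})^{M_0+1}f|^2dx$, applying Cauchy--Schwarz in the Carleson form (or, equivalently, the tent-space pairing in Proposition \ref{p2.13}(i)--(iii)) and again splitting the $t$-integral into $t\lesssim r_B$ and $t\gtrsim r_B$, I reduce the problem to estimating tent-space integrals of $F$ over $\widehat{B'}$ with balls $B'$ whose radius and location are governed by that of $B$; the three vanishing statements $\eta_i(F)=0$ then translate directly into $\gamma_i(f)=0$ for $i=1,2,3$.

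The main obstacle, in both directions, is the absence of the standard Poisson upper bound and Davies--Gaffney estimate for $L_\alpha$: every off-diagonal step must be executed through the weighted estimates of Theorem \ref{t2.2} and Proposition \ref{p2.3}, with bookkeeping of the factors $D_\sigma(x,t)D_\sigma(y,t)$ and of the relation $\epsilon=\alpha+n-n/p>0$ coming from $p>n/(n+\alpha)$. In particular, the reproducing identity argument for (b)$\Rightarrow$(a) requires that $\Phi_{r_B}(t,L_\alpha)$ itself inherits enough kernel decay to make the tails over $S_j(B)$ summable with a geometric factor of the form $2^{-j\epsilon}$ uniformly in $r_B$; this is precisely where the assumption $M\ge 2M_1$ (rather than merely $M\ge M_1$) is used, so that after extracting one power $(t^\alpha L_\alpha)^M e^{-t^\alpha L_\alpha}$ there remains enough smoothing to accommodate Proposition \ref{p2.3} with $k$ sufficiently large.
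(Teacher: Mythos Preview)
Your overall strategy (translate the vanishing conditions $\gamma_i$ into $\eta_i$ and back, via off-diagonal estimates from Theorem \ref{t2.2} and Proposition \ref{p2.3}) matches the paper, but both directions of your plan contain a genuine gap.

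\textbf{(a)$\Rightarrow$(b).} The quantity you must control on $\widehat B$ involves $(I-e^{-t^\alpha L_\alpha})^{2M_1+1}f$, whereas the VMO conditions $\gamma_i(f)$ only control $(I-e^{-r_B^\alpha L_\alpha})^{M_0+1}f$. Your proposed spatial decomposition of $f$ itself into $f\chi_{4B}+\sum_j f\chi_{S_j(B)}$ does not bridge this: the raw restriction $f\chi_{S_j(B)}$ is not bounded by any BMO or VMO quantity, so your claimed tail bound $C\,2^{-j\epsilon}\|f\|_{\mathrm{BMO}}$ cannot follow from Theorem \ref{t2.2} alone. Moreover, even if it did, a tail controlled by the \emph{global} norm $\|f\|_{\mathrm{BMO}}$ does not vanish under the VMO limits. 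The paper resolves both issues by first inserting $(I-e^{-r_{2B}^\alpha L_\alpha})^{M_0+1}$ (term $\mathrm{I}$) and its complement (term $\mathrm{J}$), and for $\mathrm{J}$ uses the binomial identity
\[
(I-e^{-t^\alpha L_\alpha})^{2M_1+1}=\Psi_1(L_\alpha)(I-e^{-r_{2B}^\alpha L_\alpha})^{M_0+1}+\Psi_2(L_\alpha)(I-e^{-(r_{2B}^\alpha-t^\alpha)L_\alpha})^{M_0+1},
\]
so that every piece carries a factor $(I-e^{-s^\alpha L_\alpha})^{M_0+1}f$ with $s\sim r_B$. The resulting estimate is $\sum_k 2^{-k\alpha}\delta_k(f,B)$ where each $\delta_k(f,B)$ is a \emph{localized} BMO quantity over balls of radius $\sim r_B$ contained in $2^{k+1}B$; these, not the global norm, tend to $0$ under the VMO limits, and the $2^{-k\alpha}$ allows dominated convergence. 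This algebraic step is the heart of the argument and is absent from your plan.

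\textbf{(b)$\Rightarrow$(a).} Your proposed $\Phi_{r_B}(t,\lambda)$, built by dividing $(1-e^{-r_B^\alpha\lambda})^{M_0+1}$ by $(t^\alpha\lambda)^M(1-e^{-t^\alpha\lambda})^{2M_1+1}$, is not a bounded Borel function (the denominator vanishes to high order at $\lambda=0$), so the functional calculus does not produce an operator with the kernel decay you need. The paper avoids this by passing to $L^2$-duality: one writes the left side as a supremum over $g\in L^2$ of $|\int f\,(I-e^{-r_B^\alpha L_\alpha})^{M_0+1}(g\chi_B)|$, shows $(I-e^{-r_B^\alpha L_\alpha})^{M_0+1}(g\chi_B)\in H^p_{L_\alpha}\cap L^2$, applies the reproducing formula of Lemma \ref{l2.14}, and then performs the annular decomposition $\widehat{2^{k+1}B}\setminus\widehat{2^k B}$ on the tent side. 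This yields a bound by $\sum_k 2^{k(\frac{n}{2}-\frac{n}{p}-\alpha-\alpha M_0)}\sigma_k(f,B)$ with localized tent quantities $\sigma_k$, which again vanish under the $\eta_i$ limits.
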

\begin{proof}
First, we prove that (a) implies (b). By the fact $\mathrm{VMO}_{L_\alpha,M_0}^{\frac{1}{p}-1}
(\mathbb{R}^{n})\subset \mathrm{BMO}_{L_\alpha,M_0}^{\frac{1}{p}-1}(\mathbb{R}^{n})$ and the definition of
the BMO space, we only need to verify that
$$\lf(t^\alpha L_\alpha\r)^M e^{-t^\alpha L_\alpha}\lf(I-e^{-t^\alpha L_\alpha}\r)^{2M_1+1}f
\in T_{2,v}^{p,\infty}(\mathbb{R}_{+}^{n+1}).$$
To this end, we need to show that, for any ball $B:=B(x_B,r_B)$,
\begin{align}\label{3.2}
&\frac{1}{|B|^{\frac{1}{p}-\frac{1}{2}}}\lf(\int_{\widehat{B}}\lf|\lf(t^\alpha L_\alpha\r)^M
e^{-t^\alpha L_\alpha}\lf(I-e^{-t^\alpha L_\alpha}\r)^{2M_1+1}f(x)\r|^2\frac{dxdt}{t}\r)^\frac{1}{2}\nonumber\\
&\quad\ls \sum_{k=1}^{\infty}2^{-k\alpha}\dz_k (f,B),
\end{align}
where
\begin{align*}
&\dz_k (f,B)\\
&\quad:=\sup_{B^*\subset 2^{k+1}B:r_{B^*}\in[r_B/2,2r_B]}\frac{1}{|B^*|^{\frac{1}{p}-\frac{1}{2}}}\lf(\int_{B^*}\lf|\lf(I-e^{-r_{B^*}^\alpha L_\alpha}\r)^{M_0+1}f(x)\r|^2\frac{dxdt}{t}\r)^\frac{1}{2}.
\end{align*}
In fact, since $f\in \mathrm{VMO}_{L_\alpha,M_0}^{\frac{1}{p}-1}(\mathbb{R}^{n})\subset \mathrm{BMO}_{L_\alpha,M_0}
^{\frac{1}{p}-1}(\mathbb{R}^{n})$, it follows that
$$\dz_k (f,B)\le \|f\|_{\mathrm{BMO}_{L_\alpha,M_0}^{\frac{1}{p}-1}(\rn)}<\infty.$$
Further, for each $k\in \mathbb{N}$, by the definition of $\mathrm{VMO}_{L_\alpha,M_0}^{\frac{1}{p}-1}
(\mathbb{R}^{n})$, we have
\begin{align*}
\lim_{c\rightarrow 0}\sup_{B:r_B\le c} \dz_k (f,B)=\lim_{c\rightarrow \infty}\sup_{B:r_B\geq c} \dz_k (f,B)=\lim_{c\rightarrow \infty}\sup_{B\subset B(0,c)^{\complement}} \dz_k (f,B)=0.
\end{align*}
This, together with \eqref{3.2} and the dominated convergence theorem on series, shows that
\begin{align}\label{3.3}
&\eta_{1}\lf(\lf(t^\alpha L_\alpha\r)^M e^{-t^\alpha L_\alpha} \lf(I-e^{-t^\alpha L_\alpha}\r)^{2M_1+1}f\r)\nonumber\\
&\quad=\lim_{c \rightarrow 0}\sup_{B:r_B\le c}\frac{1}{|B|^{\frac{1}{p}-\frac{1}{2}}}\lf(\int_{\widehat{B}}
\lf|\lf(t^\alpha L_\alpha\r)^M e^{-t^\alpha L_\alpha} \lf(I-e^{-t^\alpha L_\alpha}\r)^{2M_1+1}f(x)\r|^2\frac{dxdt}{t}\r)\nonumber\\
&\quad\ls \sum_{k=1}^\infty 2^{-k\alpha}\lim_{c \rightarrow 0}\sup_{B:r_B\le c} \dz_k (f,B)=0.
\end{align}
Thus,
$$\eta_{1}\lf(\lf(t^\alpha L_\alpha\r)^M e^{-t^\alpha L_\alpha} \lf(I-e^{-t^\alpha L_\alpha}\r)^{2M_1+1}f\r)=0.$$

Similarly, we have
\begin{align*}
&\eta_{2}\lf(\lf(t^\alpha L_\alpha\r)^M e^{-t^\alpha L_\alpha} \lf(I-e^{-t^\alpha L_\alpha}\r)^{2M_1+1}f\r)\\
&\quad=\eta_{3}\lf(\lf(t^\alpha L_\alpha\r)^M e^{-t^\alpha L_\alpha} \lf(I-e^{-t^\alpha L_\alpha}\r)^{2M_1+1}f\r)\\
&\quad=0,
\end{align*}
which, together with Proposition \ref{3.2}, implies that
$$\lf(t^\alpha L_\alpha\r)^M e^{-t^\alpha L_\alpha} \lf(I-e^{-t^\alpha L_\alpha}\r)^{2M_1+1}f\in
T_{2,v}^{p,\infty}(\mathbb{R}_{+}^{n+1}),$$ and hence, (a) implies (b).

Now, we turn to prove \eqref{3.2}. Notice that
\begin{align*}
&\frac{1}{|B|^{\frac{1}{p}-\frac{1}{2}}}\left[\int_{\widehat{B}}\left|\lf(t^\alpha L_\alpha\r)^M e^{-t^\alpha L_\alpha}
\lf(I-e^{-t^\alpha L_\alpha}\r)^{2M_1+1}f(x)\right|^{2} \frac{d x d t}{t}\right]^{1 / 2} \\
&\quad\leq \frac{1}{|B|^{\frac{1}{p}-\frac{1}{2}}}\left[\int_{\widehat{B}} \left|\lf(t^\alpha L_\alpha\r)^M e^{-t^\alpha L_\alpha}
\lf(I-e^{-t^\alpha L_\alpha}\r)^{2M_1+1}\lf(I-e^{-r_{2B}^\alpha L_\alpha}\r)^{M_0+1}f(x)\right|^{2} \frac{d x d t}{t}\right]^{1 / 2} \\
&\hs\quad +\frac{1}{|B|^{\frac{1}{p}-\frac{1}{2}}}\left[\int_{\widehat{B}}\left|\lf(t^\alpha L_\alpha\r)^M e^{-t^\alpha L_\alpha}
\lf(I-e^{-t^\alpha L_\alpha}\r)^{2M_1+1}\right.\right.\\
&\hs\quad\times\left.\left.\lf(I-\lf(I-e^{-r_{2B}^\alpha L_\alpha}\r)^{M_0+1}\r)f(x)\right|^{2} \frac{d x d t}{t}\right]^{1 / 2} \\
&\quad=: \mathrm{I}+\mathrm{J} .
\end{align*}

To estimate the term $\mathrm{I}$, set $U_{1}(B) := 2 B$ and $U_{k}(B) :=(2^{k} B)\backslash (2^{k-1} B)$
when $k \geq 2$. For $k \in \mathbb{N}$, let
$$b_{k} :=\left[\lf(I-e^{-r_{2B}^\alpha L_\alpha}\r)^{M_0+1}f\right] \chi_{U_{k}(B)}.$$
Then, by the Minkowski inequality, we conclude that
\begin{align*}
\mathrm{I} &=\frac{1}{|B|^{\frac{1}{p}-\frac{1}{2}}}\left(\int_{\widehat{B}} \left|\lf(t^\alpha L_\alpha\r)^M
e^{-t^\alpha L_\alpha} \lf(I-e^{-t^\alpha L_\alpha}\r)^{2M_1+1}\right.\right.\\
&\hs\times\left.\left.\sum_{k=1}^\infty\lf[\lf(I-e^{-r_{2B}^\alpha L_\alpha}\r)^{M_0+1}f(x)\r]\chi_{U_{k}(B)}
\right|^{2} \frac{d x d t}{t}\right)^{1 / 2} \\
&\le \sum_{k=1}^\infty\frac{1}{|B|^{\frac{1}{p}-\frac{1}{2}}}\left(\int_{\widehat{B}} \left|
\lf(t^\alpha L_\alpha\r)^M e^{-t^\alpha L_\alpha} \lf(I-e^{-t^\alpha L_\alpha}\r)^{2M_1+1}b_k(x)\right|^{2} \frac{d x d t}{t}\right)^{1 / 2} \\
&=: \sum_{k=1}^{\infty} \mathrm{I}_{k}
\end{align*}

When $k=1$, from Theorem \ref{t2.2} and Proposition \ref{p2.3}, we deduce that
\begin{align}\label{3.4}
\mathrm{I}_{1} &=\frac{1}{|B|^{\frac{1}{p}-\frac{1}{2}}}\left(\int_{\widehat{B}} \left|\lf(t^\alpha L_\alpha\r)^M
e^{-t^\alpha L_\alpha} \lf(I-e^{-t^\alpha L_\alpha}\r)^{2M_1+1}b_1(x)\right|^{2} \frac{d x d t}{t}\right)^\frac{1}{2}\nonumber\\
&=\frac{1}{|B|^{\frac{1}{p}-\frac{1}{2}}}\left(\int_{\widehat{B}} \left|\sum_{i=0}^{2M_1+1}C_i^{2M_1+1}
\lf(t^\alpha L_\alpha\r)^Me^{-(i+1)t^\alpha L_\alpha}b_1(x)\right|^{2} \frac{d x d t}{t}\right)^\frac{1}{2}\nonumber\\
&\ls \frac{1}{|B|^{\frac{1}{p}-\frac{1}{2}}}\sum_{i=0}^{2M_1+1}\lf(\int_0^{r_B}|B|\lf(\frac{2^jr_B}{t}\r)^{2n}
\lf(\frac{2^jr_B}{t}\r)^{-2n-2\alpha}\lf(\fint_{S_{j}(B)}|b_1(x)|^2dx\r)\frac{dt}{t}\r)^\frac{1}{2}\nonumber\\
&\ls \frac{1}{|B|^{\frac{1}{p}-\frac{1}{2}}}\sum_{i=0}^{2M_1+1}\lf(\int_0^{r_B}\frac{|B|}{|2^jB|}
\lf(\frac{2^jr_B}{t}\r)^{-2\alpha}\|b_1\|_{L^2{(S_j(B))}}^2\frac{dt}{t}\r)^\frac{1}{2}\nonumber\\
&\ls \frac{1}{|B|^{\frac{1}{p}-\frac{1}{2}}}\|b_1\|_{L^2{(S_j(B))}}\nonumber\\
&\ls\dz_1(f,B),
\end{align}
here and hereafter, $C_i^{2M_1+1}:=(-1)^i {i \choose 2M_1+1}$.

When $k \geq 2$, notice that for any $x \in B$ and $y \in(2^{k} B)^{\complement},|x-y|
\gtrsim 2^{k} r_{B}$. It then follows from Proposition \ref{p2.3} that
\begin{align*}
&\left|\lf(t^\alpha L_\alpha\r)^M e^{-t^\alpha L_\alpha} \lf(I-e^{-t^\alpha L_\alpha}\r)^{2M_1+1}b_k(x)\right|\\
&\quad= \left|\sum_{i=0}^{2M_1+1}C_i^{2M_1+1}\lf(t^\alpha L_\alpha\r)^Me^{-(i+1)t^\alpha L_\alpha}b_k(x)\right|\\
&\quad\ls\int_{U_{k}(B)}t^{-n}D_\sz(x,t^\alpha)D_\sz(y,t^\alpha) \lf(\frac{t+|x-y|}{t}\r)^{-n-\alpha}\lf|b_k(y)\r|dy \\
&\quad\ls \lf(2^{k} r_B\r)^{-n-\alpha}t^\alpha D_\sz(x,t^\alpha)\int_{2^{k} B}D_\sz(y,t^\alpha)\lf|
\left(I-e^{-r_{2B}^\alpha L_\alpha}\right)^{M_0+1} f(y)\r|dy .
\end{align*}

For any ball $B(x_{B}, 2^{k} r_{B})$ with $k \geq 2$, there exists a collection $\{B_{k, 1},
B_{k, 2}, \ldots, B_{k, N_{k}}\}$ of balls such that each ball $B_{k, i}\subset 2^{k+1}B$ and
is of radius $r_{2 B}$, $B(x_{B}, 2^{k} r_{B}) \subset \bigcup_{i=1}^{N_{k}} B_{k, i}$ with $N_{k}
\lesssim 2^{n k}$; see, for instance, \cite[pp. 645-646]{ddy05}. By this, the H\"older inequality,
and Lemma \ref{l2.1}, we find that, for $t\in(0,r_B)$,
\begin{align*}
&\left|\lf(t^\alpha L_\alpha\r)^M e^{-t^\alpha L_\alpha} \lf(I-e^{-t^\alpha L_\alpha}\r)^{2M_1+1}b_k(x)\right|\\
&\quad\ls\sum_{i=1}^{N_{k}} \lf(2^{k} r_B\r)^{-n-\alpha}t^\alpha D_\sz(x,t^\alpha)\int_{B_{k,i}}D_\sz(y,t^\alpha)
\lf|\left(I-e^{-r_{2B}^\alpha L_\alpha}\right)^{M_0+1} f(y) \r|dy \\
&\quad\ls \sum_{i=1}^{N_{k}} \lf(2^{k} r_B\r)^{-n-\alpha}t^\alpha D_\sz(x,t^\alpha)
\lf(\int_{B_{k,i}}\lf|\left(I-e^{-r_{2B}^\alpha L_\alpha}\right)^{M_0+1} f(y)\r|^2 dy\r)^\frac{1}{2}\\
&\qquad\times\lf(\int_{B_{k,i}} D_{2\sz}(y,t^\alpha) d y\r)^\frac{1}{2}\\
&\quad\ls N_{k} \lf(2^{k} r_B\r)^{-n-\alpha}t^\alpha D_\sz(x,t^\alpha)|B_{k,i}|^{\frac{1}{p}-\frac{1}{2}}
\dz_k(f,B)r_B^{\frac{n}{2}}\\
&\quad\ls2^{-k\alpha}r_B^{-n-\alpha+\frac{n}{p}}t^\alpha D_\sz(x,t^\alpha) \delta_{k}(f, B).
\end{align*}
From this and Lemma \ref{l2.1}, we deduce that
\begin{align}\label{3.5}
\mathrm{I}_{k}&=\frac{1}{|B|^{\frac{1}{p}-\frac{1}{2}}}\left(\int_{\widehat{B}}\left|\lf(t^\alpha L_\alpha\r)^M
e^{-t^\alpha L_\alpha} \lf(I-e^{-t^\alpha L_\alpha}\r)^{2M_1+1}b_k(x)\right|^{2} \frac{d x d t}{t}\right)^{\frac{1}{2}}\nonumber\\
&\ls \frac{1}{|B|^{\frac{1}{p}-\frac{1}{2}}}\left(\int_0^{r_B}\int_{B}
2^{-2k\alpha}r_B^{-2n-2\alpha+2\frac{n}{p}}t^{2\alpha
} D_{2\sz}(x,t^\alpha) \delta_{k}^2(f, B)\frac{d x d t}{t}\right)^{\frac{1}{2}}\nonumber\\
&\ls \frac{1}{|B|^{\frac{1}{p}-\frac{1}{2}}}2^{-k\alpha}r_B^{-n-\alpha+\frac{n}{p}} \delta_{k}(f, B)
\left(\int_0^{r_B} t^{2\alpha}\int_{B}D_{2\sz}(x,t^\alpha)dx\frac{d t}{t}\right)^{\frac{1}{2}}\nonumber\\
&\ls r_B^{\frac{n}{2}-\frac{n}{p}}2^{-k\alpha}r_B^{-n-\alpha+\frac{n}{p}} r_B^{\frac{n}{2}}\left(\int_0^{r_B}
t^{2\alpha}\frac{d t}{t}\right)^{\frac{1}{2}}\delta_{k}(f, B)\nonumber\\
&\ls 2^{-k\alpha}\delta_{k}(f, B).
\end{align}
Combining \eqref{3.4} and \eqref{3.5}, we obtain that $I\lesssim \sum_{k=1}^{\infty} 2^{-k \alpha}
\delta_{k}(f, B)$.

To estimate the term $\mathrm{J}$, write
\begin{align*}
&\lf(I-e^{-t^\alpha L_\alpha}\r)^{2M_1+1}\\
&\hs=\lf(I-e^{-r_{2B}^\alpha L_\alpha}+e^{-r_{2B}^\alpha L_\alpha}-e^{-t^\alpha L_\alpha}\r)^{2M_1+1}\\
&\hs=\lf[\sum_{k=0}^{M_1}C_k^{2M_1+1}e^{-kt^\alpha L_\alpha}\lf(I-e^{-r_{2B}^\alpha L_\alpha}\r)^{2M_1-M_0-k}
\lf(I-e^{-(r_{2B}^\alpha-t^\alpha)L_\alpha}\r)^k\r]\\
&\hs\hs\times\lf(I-e^{-r_{2B}^\alpha L_\alpha}\r)^{M_0+1}\\
&\hs\hs+\lf[\sum_{k=M_1+1}^{2M_1+1}C_k^{2M_1+1}e^{-kt^\alpha L_\alpha}\lf(I-e^{-r_{2B}^\alpha L_\alpha}
\r)^{2M_1-k+1}\lf(I-e^{-(r_{2B}^\alpha-t^\alpha)L_\alpha}\r)^{k-M_0-1}\r]\\
&\hs\hs\times\lf(I-e^{-(r_{2B}^\alpha-t^\alpha)L_\alpha}\r)^{M_0+1}\\
&\hs=:\Psi_1(L_\alpha)\lf(I-e^{-r_{2B}^\alpha L_\alpha}\r)^{M_0+1}+\Psi_2(L_\alpha)\lf(I-e^{-(r_{2B}^\alpha
-t^\alpha)L_\alpha}\r)^{M_0+1}.
\end{align*}
For $t \in\left(0, r_{B}\right)$, by Proposition \ref{p2.3}, we conclude that the following upper bound for
the kernel of $(t^\alpha L_\alpha)^M e^{-t^\alpha L_\alpha}(I-(I-e^{-r_{2B}^\alpha L_\alpha})^{M_0+1})
\Psi_{1}(L_\alpha)$:
\begin{align*}
\left(\frac{t}{r_{B}}\right)^{\alpha M}D_\sz(x,r_B^\alpha)D_\sz(y,r_B^\alpha)
\frac{\left(r_{B}\right)^{\alpha}}{\left(r_{B}+|x-y|\right)^{n+\alpha}},\ \forall x, y \in \mathbb{R}^{n}.
\end{align*}
Using this and some computation similar to the estimate for $\mathrm{I}_{k}$, we find that, for any
$x \in B$ and $t \in\left(0, r_{B}\right)$,
\begin{align*}
&\left|\lf(t^\alpha L_\alpha\r)^M e^{-t^\alpha L_\alpha} \lf(I-\lf(I-e^{-r_{2B}^\alpha L_\alpha}\r)^{M_0+1}\r)\Psi_1(L_\alpha)\lf(I-e^{-r_{2B}^\alpha L_\alpha}\r)^{M_0+1}f(x)\right|\\
&\quad\ls\sum_{k=1}^{\infty} \left|\lf(t^\alpha L_\alpha\r)^M e^{-t^\alpha L_\alpha}
\lf(I-\lf(I-e^{-r_{2B}^\alpha L_\alpha}\r)^{M_0+1}\r)\r.\\
&\qquad\lf.\times\Psi_1(L_\alpha)\lf[\lf(I-e^{-r_{2B}^\alpha L_\alpha}\r)^{M_0+1}f\chi_{U_{k}(B)}\r](x)\right|\\
&\quad\ls\sum_{k=1}^{\infty} \int_{U_{k}(B)}\left(\frac{t}{r_{B}}\right)^{\alpha M}D_\sz(x,r_B^\alpha)
D_\sz(y,r_B^\alpha) \frac{\left(r_{B}\right)^{\alpha}}{\left(r_{B}+|x-y|\right)^{n+\alpha}}\\
&\qquad\times\lf|\lf(I-e^{-r_{2B}^\alpha L_\alpha}\r)^{M_0+1}f(y)\r|dy\\
&\quad\ls \left(\frac{t}{r_{B}}\right)^{\alpha M} r_B^\alpha \sum_{k=1}^{\infty}(2^kr_B)^{-n-\alpha}D_\sz(x,r_B^\alpha)\\
&\qquad\times\sum_{i=1}^{N_k}\int_{B_{k,i}}D_\sz(y,r_B^\alpha)\lf|\lf(I-e^{-r_{2B}^\alpha L_\alpha}\r)^{M_0+1}f(y)\r|dy\\
&\quad\ls\left(\frac{t}{r_{B}}\right)^{\alpha M} r_B^\alpha \sum_{k=1}^{\infty}(2^kr_B)^{-n-\alpha}D_\sz(x,r_B^\alpha)N_k
|B_{k,i}|^{\frac{1}{p}-\frac{1}{2}}\delta_{k}(f, B)r_B^{\frac{n}{2}}\\
&\quad\ls\sum_{k=1}^{\infty}2^{-k\alpha}r_B^{-\alpha M-n-\frac{n}{p}}t^{\alpha M}D_\sz(x,r_B^\alpha)\delta_{k}(f, B).
\end{align*}
It then follows from Lemma \ref{l2.1} that
\begin{align*}
&\frac{1}{|B|^{\frac{1}{p}-\frac{1}{2}}}\left(\int_{\widehat{B}} \left|\lf(t^\alpha L_\alpha\r)^M e^{-t^\alpha L_\alpha} \lf(I-\lf(I-e^{-r_{2B}^\alpha L_\alpha}\r)^{M_0+1}\r)\Psi_1(L_\alpha)\right.\right.\\
&\qquad\left.\left.\times\lf(I-e^{-r_{2B}^\alpha L_\alpha}\r)^{M_0+1}f(x)\right|^{2} \frac{d x d t}{t}\right)^{\frac{1}{2}}\\
&\quad\ls r_B^{\frac{n}{2}-\frac{n}{p}}\sum_{k=1}^{\infty}2^{-k\alpha}r_B^{-\alpha M-n-\frac{n}{p}}\delta_{k}(f, B)\left(\int_0^{r_B} t^{2\alpha M
}\int_{B}D_{2\sz}(x,r_B^\alpha)dx\frac{d t}{t}\right)^{\frac{1}{2}}\\
&\quad\ls \sum_{k=1}^\infty 2^{-k\alpha}\delta_{k}(f, B).
\end{align*}
Similarly, we have that, for all $x \in B$ and $t \in\left(0, r_{B}\right)$,
\begin{align*}
&\left|\lf(t^\alpha L_\alpha\r)^M e^{-t^\alpha L_\alpha} \lf(I-\lf(I-e^{-r_{2B}^\alpha L_\alpha}\r)^{M_0+1}\r)\r.\\
&\qquad\times\lf.\Psi_2(L_\alpha)\lf(I-e^{-(r_{2B}^\alpha-t^\alpha) L_\alpha}\r)^{M_0+1}f(x)\right|\\
&\quad\ls\sum_{k=1}^{\infty}2^{-k\alpha}r_B^{-\alpha M-n-\frac{n}{p}}t^{\alpha M}D_\sz(x,r_B^\alpha)\delta_{k}(f, B)
\end{align*}
and
\begin{align*}
&\frac{1}{|B|^{\frac{1}{p}-\frac{1}{2}}}\left(\int_{\widehat{B}} \left|\lf(t^\alpha L_\alpha\r)^M e^{-t^\alpha L_\alpha} \lf(I-\lf(I-e^{-r_{2B}^\alpha L_\alpha}\r)^{M_0+1}\r)\Psi_2(L_\alpha)\right.\right.\\
&\qquad\left.\left.\times\lf(I-e^{-(r_{2B}^\alpha-t^\alpha) L_\alpha}\r)^{M_0+1}f(x)\right|^{2} \frac{d x d t}{t}\right)^{\frac{1}{2}}\\
&\quad\ls \sum_{k=1}^\infty 2^{-k\alpha}\delta_{k}(f, B).
\end{align*}

Combining the above two estimates, we have
$$
\mathrm{J} \lesssim \sum_{k=1}^\infty 2^{-k \alpha} \delta_{k}(f, B),
$$
which, together with the estimate for $\mathrm{I}$, implies that \eqref{3.2} holds. This finishes the proof
of the conclusion that (a) implies (b).

Now we turn to prove that (b) implies (a). By (b) and the definition of $T_{2}^{p,\infty}(\mathbb{R}_{+}^{n+1})$
space, we conclude that
$$\lf|\lf(t^\alpha L_\alpha\r)^M e^{-t^\alpha L_\alpha}\lf(I-e^{-t^\alpha L_\alpha}\r)^{2M_1+1}f\r|^2\frac{dxdt}{t}$$
is a Carleson measure. It then follows from Lemma \ref{l2.16} that $f \in \mathrm{BMO}_{L_\alpha,M_0}^{\frac{1}{p}-1}(\mathbb{R}^{n})$.
Notice that
\begin{align*}
&\frac{1}{|B|^{\frac{1}{p}-\frac{1}{2}}}\left(\int_{{B}} \left|\lf(I-e^{-r_{B}^\alpha L_\alpha}\r)^{M_0+1}f(x)\right|^2dx\right)^\frac{1}{2} \\
&\quad=\sup _{\|g\|_{L^{2}(\mathbb{R}^n)} \leq 1} \frac{1}{|B|^{\frac{1}{p}-\frac{1}{2}}}\left|\int_{\mathbb{R}^n} f(x)\lf(I-e^{-r_{B}^\alpha L_\alpha}\r)^{M_0+1}
(g\chi_B) (x) d x\right| .
\end{align*}
Let $g\in L^2(\mathbb{R}^n)$. Using \cite[Theorems 3.2 and 3.3]{bd21}, we deduce that
\begin{align*}
&\lf(\int_{\mathbb{R}^{n}}\lf|\lf(I-e^{-r_{B}^\alpha L_\alpha}\r)^{M_0+1} (g\chi_B
)(x)\r|^2dx\r)^\frac{1}{2}\\
&\quad\ls \sum_{i=0}^{M_0+1}\lf(\int_{\mathbb{R}^{n}}\lf|C_i^{M_0+1}e^{-ir_{B}^\alpha L_\alpha}(g\chi_B
) (x) \r|^2dx\r)^\frac{1}{2}\\
&\quad\ls \|g\|_{L^2(\mathbb{R}^n)}< \infty.
\end{align*}
Moreover, by \cite[Lemma 3.16]{bn22}, we have
\begin{align*}
	\lf\|S_{L_\alpha}\lf(\lf(I-e^{-r_{B}^\alpha L_\alpha}\r)^{M_0+1} (g\chi_B
	)\r)\r\|_{L^p(\mathbb{R}^n)}\ls \|B\|^{\frac{1}{p}-\frac{1}{2}} \|g \chi_B\|_{L^2(\mathbb{R}^n)}< \infty.
\end{align*}
Thus,
$$\lf(I-e^{-r_{B}^\alpha L_\alpha}\r)^{M_0+1} (g\chi_B)(x) \in H^p_{L_\alpha}(\mathbb{R}^n)\cap
L^2(\mathbb{R}^n).$$
This, together with Lemma \ref{l2.14}, shows that
\begin{align*}
&\left|\int_{\mathbb{R}^n} f(x)\lf(I-e^{-r_{B}^\alpha L_\alpha}\r)^{M_0+1} (g\chi_B)(x) d x\right| \\
&\quad=\left|\int_{\mathbb{R}_{+}^{n+1}} \lf(t^\alpha L_\alpha\r)^M e^{-t^\alpha L_\alpha}\lf(I-e^{-t^\alpha L_\alpha}\r)^{2M_1+1} f(x)t^\alpha L_\alpha e^{-t^\alpha L_\alpha}\r.\\
&\hs\quad\times\lf.\lf(I-e^{-r_{B}^\alpha L_\alpha}\r)^{M_0+1} (g\chi_B) (x) \frac{d x d t}{t}\right| \\
&\quad\ls\int_{\widehat{4 B}}\left| \lf(t^\alpha L_\alpha\r)^M e^{-t^\alpha L_\alpha}\lf(I-e^{-t^\alpha
L_\alpha}\r)^{2M_1+1} f(x)\r.\\
&\qquad\lf. \times t^\alpha L_\alpha e^{-t^\alpha L_\alpha}\lf(I-e^{-r_{B}^\alpha L_\alpha}\r)^{M_0+1} (g\chi_B
)(x)  \right|\frac{d x d t}{t} \\
&\hs\quad+\sum_{k=2}^{\infty} \int_{\widehat{2^{k+1} B}  \backslash \widehat{{2}^k B}} \cdots \\
&\quad=: \mathrm{A}_{1}+\sum_{k=2}^{\infty} \mathrm{A}_{k} .
\end{align*}

For $A_1$, by an argument similar to the estimate for $\mathrm{I_1}$, we have
\begin{align*}
&\left(\int_{\widehat{4B}} \left|t^\alpha L_\alpha e^{-t^\alpha L_\alpha} \lf(I-e^{-t^\alpha L_\alpha}\r)^{M_0+1}(g\chi_B
)(x) \right|^{2} \frac{d x d t}{t}\right)^\frac{1}{2}\nonumber\\
&\quad=\left(\int_{\widehat{4B}} \left|\sum_{i=0}^{M_0+1}C_i^{M_0+1}t^\alpha L_\alpha e^{-(i+1)t^\alpha L_\alpha}(g\chi_B
)(x) \right|^{2} \frac{d x d t}{t}\right)^\frac{1}{2}\nonumber\\
&\quad\ls \sum_{i=0}^{M_0+1}\lf[\int_0^{4r_B}|4B|\lf(\frac{2^j4r_B}{t}\r)^{2n}\lf(\frac{2^j4r_B}{t}\r)^{-2n-2\alpha}\r.\\
&\qquad\lf.\times\lf(\fint_{S_{j}(4B)}|(g\chi_B)(x) |^2\,dx\r)\frac{dt}{t}\r]^\frac{1}{2}\nonumber\\
&\ls \sum_{i=0}^{M_0+1}\lf[\int_0^{4r_B}\frac{|4B|}{|2^j4B|}\lf(\frac{2^j4r_B}{t}\r)^{-2\alpha}\|g
\chi_B \|_{L^2{(\mathbb{R}^n)}}^2\frac{dt}{t}\r]^\frac{1}{2}\nonumber\\
&\ls \|g\|_{L^2{(B)}}\ls 1,
\end{align*}
which, together with the H\"older inequality, further implies that
\begin{align*}
\mathrm{A}_{1} &\lesssim \left(\int_{\widehat{4 B}}\left| \lf(t^\alpha L_\alpha\r)^M e^{-t^\alpha L_\alpha}\lf(I-e^{-t^\alpha
L_\alpha}\r)^{2M_1+1} f(x)\right|^{2} \frac{d x d t}{t}\right)^{\frac{1}{2}} \\
&\hs\times\left(\int_{\widehat{4 B}}\left|t^\alpha L_\alpha e^{-t^\alpha L_\alpha}\lf(I-e^{-r_{B}^\alpha
L_\alpha}\r)^{M_0+1} (g\chi_B)(x)  \right|^{2} \frac{d x d t}{t}\right)^\frac{1}{2} \\
& \lesssim\left(\int_{\widehat{4 B}}\left| \lf(t^\alpha L_\alpha\r)^M e^{-t^\alpha L_\alpha}\lf(I-e^{-t^\alpha
L_\alpha}\r)^{2M_1+1} f(x)\right|^{2} \frac{d x d t}{t}\right)^\frac{1}{2}.
\end{align*}

When $k \geq 2$, notice that
\begin{align*}
& t^\alpha L_\alpha e^{-t^\alpha L_\alpha}\lf(I-e^{-r_{B}^\alpha L_\alpha}\r)^{M_0+1} (g\chi_B)(x) \\
&\quad=\int_{0}^{r_{B}^{\alpha}} \ldots\times \int_{0}^{r_{B}^{\alpha}} t^\alpha L_\alpha^{M_0+2}
e^{-(t^\alpha+r_1+\cdots+r_{M_0+1}) L_\alpha} (g\chi_B) d r_{1} \cdots d r_{M_{0}+1} .
\end{align*}
Let $\widetilde{t}:=t^\alpha+r_1+\cdots+r_{M_0+1}$. Applying Proposition \ref{p2.3}, we get the following
upper bound for the kernel of $L_\alpha^{M_0+2} e^{-(t^\alpha+r_1+\cdots+r_{M_0+1}) L_\alpha}$:
\begin{align*}
\widetilde{t}^{-(M_0+2+\frac{n}{\alpha})}D_\sz(x,\widetilde{t})
D_\sz(y,\widetilde{t})\lf(\frac{\widetilde{t}+|x-y|}{\widetilde{t}}^\frac{1}{\alpha}\r)^{-n-\alpha}, \ \forall x,y\in\mathbb{R}^n.
\end{align*}
Since $(x, t) \in \widehat{2^{k+1} B} \backslash\widehat{2^{k} B}$ and $y \in B$, we have
$|x-y|+\widetilde{t}^\frac{1}{\alpha}\gs 2^{k} r_{B}$. By Lemma \ref{l2.1} and the H\"older inequality,
we have that, for any $(x, t) \in \widehat{2^{k+1} B} \backslash \widehat{2^{k} B}$,
\begin{align*}
&\left|t^\alpha L_\alpha e^{-t^\alpha L_\alpha}\lf(I-e^{-r_{B}^\alpha L_\alpha}\r)^{M_0+1} (g\chi_B
	)(x)\right| \\
&\quad\lesssim \frac{1}{(2^kr_B)^{n+\alpha}}\int_{0}^{r_{B}^{\alpha}} \cdots \int_{0}^{r_{B}^{\alpha}} t^\alpha \widetilde{t}^{-M_0-1}D_\sz(x,\widetilde{t})
\int_{B}D_\sz(y,\widetilde{t}) |(g\chi_B) (y)| d y d r_{1} \cdots d r_{M_{0}+1} \\
&\quad\lesssim \frac{1}{(2^kr_B)^{n+\alpha}}\int_{0}^{r_{B}^{\alpha}} \cdots \int_{0}^{r_{B}^{\alpha}}
t^\alpha \widetilde{t}^{-M_0-1}D_\sz(x,\widetilde{t})\lf(\widetilde{t}^{\frac{n}{2\alpha}}\|g\|_{L^2(B)}+\|g\|_{L^1(B)}\r) d r_{1} \cdots d r_{M_{0}+1} \\
&\quad\lesssim \frac{1}{(2^kr_B)^{n+\alpha}}\int_{0}^{r_{B}^{\alpha}} \cdots \int_{0}^{r_{B}^{\alpha}}
t^\alpha \widetilde{t}^{-M_0-1}D_\sz(x,\widetilde{t})\lf(\widetilde{t}^{\frac{n}{2\alpha}}+|B|^\frac{1}{2}\r) d r_{1} \cdots d r_{M_{0}+1},
\end{align*}
which, together with Lemma \ref{l2.1}, implies that
\begin{align*}
&\lf(\int_{{\widehat{2^{k+1}B}}\backslash{\widehat{2^{k}B}}}\left|t^\alpha L_\alpha e^{-t^\alpha L_\alpha}\lf(I-e^{-r_{B}^\alpha L_\alpha}\r)^{M_0+1} (g\chi_B
)(x)\right|^2\frac{dxdt}{t}\r)^\frac{1}{2}\\
&\quad\ls \lf[\int_{{\widehat{2^{k+1}B}}\backslash{\widehat{2^{k}B}}}(2^kr_B)^{-2n-2\alpha}\r.\\
&\qquad\times\lf.\lf(\int_{[0,r_B^\alpha]^{M_0+1}}t^\alpha \widetilde{t}^{-M_0-1}\lf(\widetilde{t}^\frac{n}{2\alpha}+|B|^\frac{1}{2}\r)D_\sz(x,\widetilde{t})d\vec{r}\r)^2\frac{dxdt}{t}\r]^\frac{1}{2}\\
&\quad\ls (2^kr_B)^{-n-\alpha}\\
&\qquad\times\int_{[0,r_B^\alpha]^{M_0+1}}\lf(\int_{{\widehat{2^{k+1}B}}\backslash{\widehat{2^{k}B}}}\lf|t^{2\alpha} \widetilde{t}^{-2M_0-2}\lf(\widetilde{t}^\frac{n}{2\alpha}+|B|^\frac{1}{2}\r)^2D_{2\sz}(x,\widetilde{t})\r|\frac{dxdt}{t}\r)^\frac{1}{2}d\vec{r}\\
&\quad\ls 2^{k(-n-\alpha)}2^{(k+1)n}r_B^{-\frac{n}{2}-\alpha}\\
&\qquad\times\int_{[0,r_B^\alpha]^{M_0+1}}\lf(\int_0^{2^{k+1} r_B}\lf|t^{2\alpha} \widetilde{t}^{-2M_0-2}\lf(\widetilde{t}^\frac{n}{\alpha}+2\widetilde{t}^\frac{n}{2\alpha}r_B^{\frac{n}{2}}+r_B^n\r)\r|\frac{dt}{t}\r)^\frac{1}{2}d\vec{r}\\
&\quad\ls 2^{k(-n-\alpha)}2^{(k+1)n}r_B^{-\frac{n}{2}-\alpha}\\
&\qquad\times\int_{[0,r_B^\alpha]^{M_0+1}}\lf[\int_0^{2^{k+1} r_B}\lf(t^{-2M_0\alpha+n}+t^{-2M_0\alpha+\frac{n}{2}}r_B^\frac{n}{2}+t^{-2M_0\alpha}r_B^n\r)\frac{dt}{t}\r]^\frac{1}{2}d\vec{r}\\
&\quad\ls 2^{k(-n-\alpha)}2^{(k+1)n} r_B^{-\frac{n}{2}-\alpha}\\
&\qquad\times\int_{[0,r_B^\alpha]^{M_0+1}}\lf[\lf(2^{k+1}r_B\r)^{-2M_0\alpha+n}\r]^\frac{1}{2}d\vec{r}\\
&\quad\ls 2^{-k\alpha(1+M_0)}.
\end{align*}
It then follows from the H\"older inequality that
\begin{align*}
\mathrm{A}_{k} &\lesssim  2^{-k\alpha(1+M_0)}\left(\int_{{\widehat{2^{k+1}B}}\backslash{\widehat{2^{k}B}}}\left|
\lf(t^\alpha L_\alpha\r)^M e^{-t^\alpha L_\alpha}\lf(I-e^{-t^\alpha L_\alpha}\r)^{2M_1+1} f(x)\right|^{2} \frac{d x d t}{t}\right)^{\frac{1}{2}} \\
&\ls 2^{k(\frac{n}{2}-\frac{n}{p}-\alpha-\alpha M_0)}\frac{|B|^{\frac{1}{p}-\frac{1}{2}}}{|2^{k+1}B|^{\frac{1}{p}-\frac{1}{2}}}
\left(\int_{{\widehat{2^{k+1}B}}\backslash{\widehat{2^{k}B}}}\left|\lf(t^\alpha L_\alpha\r)^M\r.\r.\\
&\quad\times\lf.\lf.e^{-t^\alpha L_\alpha}\lf(I-e^{-t^\alpha L_\alpha}\r)^{2M_1+1} f(x)\right|^{2}
\frac{d x d t}{t}\right)^{\frac{1}{2}}.
\end{align*}
Combining the estimates of $\mathrm{A}_{k}$, we finally obtain that
\begin{align*}
&\frac{1}{|B|^{\frac{1}{p}-\frac{1}{2}}}\left(\int_{B}\left|\lf(I-e^{-r_{B}^\alpha L_\alpha}\r)^{M_0+1}f(x)\right|^{2} d x\right)^{\frac{1}{2}}\\
&\quad\lesssim \frac{1}{|4B|^{\frac{1}{p}-\frac{1}{2}}}\left(\int_{\widehat{4 B}}\left| \lf(t^\alpha L_\alpha\r)^M e^{-t^\alpha L_\alpha}
\lf(I-e^{-t^\alpha L_\alpha}\r)^{2M_1+1} f(x)\right|^{2} \frac{d x d t}{t}\right)^\frac{1}{2}\\
&\quad\hs +\sum_{k=2}^{\infty} 2^{k(\frac{n}{2}-\frac{n}{p}-\alpha-\alpha M_0)} \sigma_{k}(f, B),
\end{align*}
where
\begin{align*}
\sigma_{k}(f, B):=& \frac{1}{|2^{k+1}B|^{\frac{1}{p}-\frac{1}{2}}}\left(\int_{{\widehat{2^{k+1}B}}
\backslash{\widehat{2^{k}B}}}\left|\lf(t^\alpha L_\alpha\r)^M e^{-t^\alpha L_\alpha}\r.\r.\\
&\times\lf.\lf.\lf(I-e^{-t^\alpha L_\alpha}\r)^{2M_1+1} f(x)\right|^{2} \frac{d x d t}{t}\right)^{\frac{1}{2}}.
\end{align*}

The assumption that
$$\lf(t^\alpha L_\alpha\r)^M e^{-t^\alpha L_\alpha}\lf(I-e^{-t^\alpha L_\alpha}\r)^{2M_1+1}f\in
T_{2,v }^{p,\infty}(\mathbb{R}_{+}^{n+1}),$$ together with Proposition \ref{p3.2}, shows that
\begin{align*}
&\lim _{c \rightarrow 0} \sup _{B: r_{B} \leq c} \frac{1}{|4B|^{\frac{1}{p}-\frac{1}{2}}}\left(\int_{\widehat{4 B}}\left| \lf(t^\alpha L_\alpha\r)^M
e^{-t^\alpha L_\alpha}\lf(I-e^{-t^\alpha L_\alpha}\r)^{2M_1+1} f(x)\right|^{2} \frac{d x d t}{t}\right)^\frac{1}{2}\\
&\quad=\lim _{c \rightarrow \infty} \sup _{B: r_{B} \geq c} \frac{1}{|4B|^{\frac{1}{p}-\frac{1}{2}}}\left(\int_{\widehat{4 B}}
\left| \lf(t^\alpha L_\alpha\r)^M e^{-t^\alpha L_\alpha}\lf(I-e^{-t^\alpha L_\alpha}\r)^{2M_1+1} f(x)\right|^{2} \frac{d x d t}{t}\right)^\frac{1}{2}\\
&\quad=\lim _{c \rightarrow \infty} \sup _{B \subset B(0, c)^{\complement}} \frac{1}{|4B|^{\frac{1}{p}-\frac{1}{2}}}
\left(\int_{\widehat{4 B}}\left| \lf(t^\alpha L_\alpha\r)^M e^{-t^\alpha L_\alpha}\lf(I-e^{-t^\alpha L_\alpha}\r)^{2M_1+1} f(x)\right|^{2}
\frac{d x d t}{t}\right)^\frac{1}{2}\\
&\quad=0
\end{align*}
and, for each $k \in \mathbb{N}$,
$$
\lim _{c \rightarrow 0} \sup _{B: r_{B} \leq c} \sigma_{k}(f, B)=\lim _{c \rightarrow \infty}
\sup _{B: r_{B} \geq c} \sigma_{k}(f, B)=\lim _{c \rightarrow \infty} \sup _{B \subset B(0, c)^{\complement}}
\sigma_{k}(f, B)=0,
$$
By an argument similar to the proof of \eqref{3.3}, we obtain that
$$\gamma_{1}(f)=\gamma_{2}(f)=\gamma_{3}(f)=0,$$
which implies that $f \in \mathrm{VMO}_{L_\alpha,M_0}^{\frac{1}{p}-1}(\mathbb{R}^{n})$. This finishes the proof
of Theorem \ref{t3.4}.
\end{proof}
	
\section{Preduality between VMO-type spaces $\mathrm{VMO}_{L_\alpha,M_0}^{\frac{1}{p}-1}(\mathbb{R}^{n})$
and Hardy spaces $H^p_{L_\alpha}(\mathbb{R}^{n})$}

In this section, we focus on the main results of the article.
\begin{theorem}\label{t4.1}
For any $p\in(\frac{n}{n+\alpha},1]$, $M_0\in \mathbb{N}$ with $M_0>\max\{\frac{n}{2\alpha}+1,
\frac{n}{p\alpha}\}$, the dual space of  $\mathrm{VMO}_{L_{\alpha}, M_0}^{\frac{1}{p}-1}(\mathbb{R}^{n})$
is the Hardy space $H_{L_\alpha}^p(\mathbb{R}^{n})$ in the following sense:
\begin{itemize}
\item [{\rm (i)}]For any given $f \in H_{L_\alpha}^p(\mathbb{R}^{n})$, define the linear functional $\ell$ by setting,
for all $g \in$ $\mathrm{VMO}_{L_{\alpha}, M_0}^{\frac{1}{p}-1}\left(\mathbb{R}^{n}\r)$,		
\begin{equation}\label{4.1}
\ell(f) :=\langle f, g\rangle .
\end{equation}	
Then there exists a positive constant $C$, independent of $f$, such that
$$\|\ell\|_{(\mathrm{VMO}_{L_{\alpha}, M_0}^{\frac{1}{p}-1}(\mathbb{R}^{n}))^*}\le
C\|f\|_{H_{L_\alpha}^p\left(\mathbb{R}^{n}\r)}.$$
		
\item [{\rm (ii)}]Conversely, for any $\ell \in(\mathrm{VMO}_{L_{\alpha}, M_0}^{\frac{1}{p}-1}(\mathbb{R}^{n})
)^{*}$, there exists a $f \in H_{L_\alpha}^p(\mathbb{R}^{n})$ such that \eqref{4.1} holds and there exists a
positive constant $C$, independent of $\ell$, such that
$$\|f\|_{H_{L_\alpha}^p(\mathbb{R}^{n})} \leq
C\|\ell\|_{(\operatorname{VMO}_{L_{\alpha}, M_0}^{\frac{1}{p}-1}(\mathbb{R}^{n}))^{*}}.$$
\end{itemize}

\end{theorem}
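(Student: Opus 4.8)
The plan is to transport the tent-space duality $(T_{2, v}^{p,\infty}(\mathbb{R}_{+}^{n+1}))^{*}=\widetilde{{T}_{2}^p}(\mathbb{R}_{+}^{n+1})$ of Theorem~\ref{t3.3} to the $L_\alpha$-adapted setting, using Theorem~\ref{t3.4} to realize $\mathrm{VMO}_{L_{\alpha}, M_0}^{\frac{1}{p}-1}(\rn)$ as a closed subspace of $T_{2, v}^{p,\infty}(\mathbb{R}_{+}^{n+1})$ and Lemma~\ref{l2.21} to build $(p,2,M,\epsilon)_{L_\alpha}$-molecules out of $T_2^p$-atoms. Fix $M_1:=M_0$ and $M:=2M_0$; then $M_1\ge M_0\ge\max\{\frac{n+2\alpha}{2\alpha},\frac{n}{p\alpha}\}$, $M\ge 2M_1$, $M>\frac{n}{p\alpha}$, and $\epsilon:=\alpha+n-\frac np>0$ (as $p>\frac{n}{n+\alpha}$), so that Theorems~\ref{t2.11} and \ref{t3.4}, Remark~\ref{r2.8}, and Lemmas~\ref{l2.14}, \ref{l2.16}, \ref{l2.21} all apply. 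Write $R_{L_\alpha}g:=(t^\alpha L_\alpha)^M e^{-t^\alpha L_\alpha}(I-e^{-t^\alpha L_\alpha})^{2M_1+1}g$, and recall from Definition~\ref{d2.4} and Theorem~\ref{t2.7} that $\|t^\alpha L_\alpha e^{-t^\alpha L_\alpha}g\|_{T_2^p}=\|S_{L_\alpha}(g)\|_p\sim\|g\|_{H_{L_\alpha}^p(\rn)}$.

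Part~(i) is essentially a restatement of Theorem~\ref{t2.11}. Since $\mathrm{VMO}_{L_{\alpha}, M_0}^{\frac{1}{p}-1}(\rn)\subset\mathrm{BMO}_{L_{\alpha}, M_0}^{\frac{1}{p}-1}(\rn)$ with the same norm and, by Theorem~\ref{t2.11}, $\mathrm{BMO}_{L_{\alpha}, M_0}^{\frac{1}{p}-1}(\rn)=(H_{L_\alpha}^p(\rn))^{*}$, each $g\in\mathrm{VMO}_{L_{\alpha}, M_0}^{\frac{1}{p}-1}(\rn)$ defines a bounded functional on $H_{L_\alpha}^p(\rn)$; hence, for $f\in H_{L_\alpha}^p(\rn)$, the pairing $\langle f,g\rangle$ in \eqref{4.1} is well defined, $|\langle f,g\rangle|\le\|g\|_{\mathrm{VMO}_{L_{\alpha}, M_0}^{\frac{1}{p}-1}(\rn)}\|f\|_{H_{L_\alpha}^p(\rn)}$, and the linear functional $\ell:=\langle f,\cdot\rangle$ satisfies the stated bound.

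For part~(ii) the first step is to check that $R_{L_\alpha}$ is an isomorphism of $\mathrm{VMO}_{L_{\alpha}, M_0}^{\frac{1}{p}-1}(\rn)$ onto a closed subspace of $T_{2, v}^{p,\infty}(\mathbb{R}_{+}^{n+1})$. That $R_{L_\alpha}g\in T_{2, v}^{p,\infty}(\mathbb{R}_{+}^{n+1})$ and $\|R_{L_\alpha}g\|_{T_2^{p,\infty}}\lesssim\|g\|_{\mathrm{VMO}_{L_{\alpha}, M_0}^{\frac{1}{p}-1}(\rn)}$ follow from Theorem~\ref{t3.4} together with Lemma~\ref{l2.15}; conversely, writing $N:=\|R_{L_\alpha}g\|_{T_2^{p,\infty}}$, the measure $|R_{L_\alpha}g|^2\,\frac{dxdt}{t}$ is Carleson of order $\frac2p-1$ with norm $N^2$, so Proposition~\ref{p2.13}(iii) together with the computation in the proof of Lemma~\ref{l2.14} bounds the functional $h\mapsto\int_{\rn}gh$ on $H_{L_\alpha}^p(\rn)\cap L^2(\rn)$ by $N\,\|h\|_{H_{L_\alpha}^p(\rn)}$, whence the proof of Lemma~\ref{l2.16} (via Theorem~\ref{t2.11}(b)) gives $\|g\|_{\mathrm{VMO}_{L_{\alpha}, M_0}^{\frac{1}{p}-1}(\rn)}=\|g\|_{\mathrm{BMO}_{L_{\alpha}, M_0}^{\frac{1}{p}-1}(\rn)}\lesssim N$. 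Now take $\ell\in(\mathrm{VMO}_{L_{\alpha}, M_0}^{\frac{1}{p}-1}(\rn))^{*}$. Then $\ell\circ R_{L_\alpha}^{-1}$ is bounded on the closed subspace $R_{L_\alpha}(\mathrm{VMO}_{L_{\alpha}, M_0}^{\frac{1}{p}-1}(\rn))$ of $T_{2, v}^{p,\infty}(\mathbb{R}_{+}^{n+1})$; extend it by Hahn--Banach to $L\in(T_{2, v}^{p,\infty}(\mathbb{R}_{+}^{n+1}))^{*}$ with $\|L\|\lesssim\|\ell\|$, and apply Theorem~\ref{t3.3} to get $\phi\in\widetilde{{T}_{2}^p}(\mathbb{R}_{+}^{n+1})$ with $\|\phi\|_{\widetilde{{T}_{2}^p}}\lesssim\|L\|$ and $L(F)=\int_{\mathbb{R}_{+}^{n+1}}F\phi\,\frac{dxdt}{t}$ for every $F\in T_{2, v}^{p,\infty}(\mathbb{R}_{+}^{n+1})$. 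Using Definitions~\ref{d2.17} and \ref{d2.19}, write $\phi=\sum_j\lambda_j A_j$ with $T_2^p$-atoms $A_j$ and $(\sum_j|\lambda_j|^p)^{1/p}\lesssim\|\phi\|_{\widetilde{{T}_{2}^p}}$, and put
$$f:=c_{(M,M_1)}\sum_j\lambda_j\int_0^\infty(t^\alpha L_\alpha)^M e^{-t^\alpha L_\alpha}(I-e^{-t^\alpha L_\alpha})^{2M_1+1}A_j(\cdot,t)\,\frac{dt}{t}=:\sum_j\lambda_j m_j.$$
By Lemma~\ref{l2.21} each $m_j$ is a fixed multiple of a $(p,2,M,\epsilon)_{L_\alpha}$-molecule, hence by Remark~\ref{r2.8} the series converges in $H_{L_\alpha}^p(\rn)$ and $\|f\|_{H_{L_\alpha}^p(\rn)}\lesssim(\sum_j|\lambda_j|^p)^{1/p}\lesssim\|\phi\|_{\widetilde{{T}_{2}^p}}\lesssim\|L\|\lesssim\|\ell\|$, which is the required norm estimate.

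It remains to identify $\ell$ with $\langle f,\cdot\rangle$ (after absorbing a fixed constant into $c_{(M,M_1)}$). For $g\in\mathrm{VMO}_{L_{\alpha}, M_0}^{\frac{1}{p}-1}(\rn)$ one has $\ell(g)=L(R_{L_\alpha}g)=\sum_j\lambda_j\int_{\mathbb{R}_{+}^{n+1}}R_{L_\alpha}g(x,t)A_j(x,t)\,\frac{dxdt}{t}$; for each $j$, since $A_j$ is supported in a tent $\widehat{B_j}$ and $g\in\mathcal{M_\alpha}$, the kernel bounds of Proposition~\ref{p2.3} and Lemma~\ref{l2.1} justify, exactly as in the proof of Lemma~\ref{l2.14}, moving the self-adjoint operator $(t^\alpha L_\alpha)^M e^{-t^\alpha L_\alpha}(I-e^{-t^\alpha L_\alpha})^{2M_1+1}$ off $g$ onto $A_j(\cdot,t)$ and applying Fubini's theorem in $\frac{dt}{t}$, which gives $\int_{\mathbb{R}_{+}^{n+1}}R_{L_\alpha}g\,A_j\,\frac{dxdt}{t}=c_{(M,M_1)}^{-1}\langle m_j,g\rangle$; summing over $j$ and using that $\langle\cdot,g\rangle$ is continuous on $H_{L_\alpha}^p(\rn)$ (as $g\in(H_{L_\alpha}^p(\rn))^{*}$) yields $\ell(g)=\langle f,g\rangle$. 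The main obstacle throughout is that $L_\alpha$ satisfies neither a Poisson nor a Davies--Gaffney bound, only the weighted estimate of Proposition~\ref{p2.3}: both the lower bound $\|g\|_{\mathrm{BMO}_{L_{\alpha}, M_0}^{\frac{1}{p}-1}(\rn)}\lesssim\|R_{L_\alpha}g\|_{T_2^{p,\infty}}$ and the Fubini/self-adjointness manipulations force one to carry the factors $D_\sigma(\cdot,t)$ through every annular and Carleson estimate (controlling them via Lemma~\ref{l2.1} and Theorem~\ref{t2.2}) and to verify that the resulting integrals still converge against functions in $\mathcal{M_\alpha}$, which may have polynomial growth.
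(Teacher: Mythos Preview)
Your proof of part~(i) coincides with the paper's. For part~(ii) you take a genuinely different route. The paper does not invoke Hahn--Banach; instead it builds an explicit \emph{retraction} $\pi_{L_\alpha,M_0}\colon T_{2,v}^{p,\infty}(\mathbb R_+^{n+1})\to \mathrm{VMO}_{L_\alpha,M_0}^{\frac1p-1}(\rn)$ (Lemma~\ref{l4.4}(ii), proved by a direct weighted annular estimate analogous to~\eqref{4.2}), composes $\ell\circ\pi_{L_\alpha,M_0}\in(T_{2,v}^{p,\infty})^*=\widetilde{T_2^p}$, and then pushes the representing element into $H_{L_\alpha}^p$ via the companion operator $\pi_{L_\alpha,M-M_0+1,2M_1}$ of Lemma~\ref{l4.3}. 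The identification $\ell(f)=\langle f,\pi_{L_\alpha,M-M_0+1,2M_1}(g)\rangle$ is first checked on $\mathrm{VMO}\cap L^2$ (where the Calder\'on reproducing formula $f=C\,\pi_{L_\alpha,M_0}(R'_{L_\alpha}f)$ holds in $L^2$) and then extended by the density result Lemma~\ref{l4.5}. Your embedding-plus-Hahn--Banach argument instead treats $R_{L_\alpha}$ as a closed \emph{section} and bypasses Lemmas~\ref{l4.4} and~\ref{l4.5} entirely, which is where the bulk of the analytic work in Section~4 lies. The trade-off is that your $f$ is nonconstructive, and your lower bound $\|g\|_{\mathrm{BMO}}\lesssim\|R_{L_\alpha}g\|_{T_2^{p,\infty}}$ (via Lemma~\ref{l2.16} and Theorem~\ref{t2.11}(b)) tacitly uses injectivity of the duality map $\mathrm{BMO}_{L_\alpha,M_0}^{\frac1p-1}(\rn)\hookrightarrow(H_{L_\alpha}^p(\rn))^*$, so that the element produced by Theorem~\ref{t2.11}(b) is the given $g$; this is true but deserves a sentence. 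The paper's route yields the projection $\pi_{L_\alpha,M_0}$ and the density of $\mathrm{VMO}\cap L^2$ as by-products of independent interest; yours is shorter once Theorem~\ref{t3.4} and Lemma~\ref{l2.21} are in hand.
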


To prove Theorem \ref{t4.1}, we need the following technical lemmas.

Let $M_0\in \mathbb{N}$ and $p\in(\frac{n}{n+\alpha},1]$. Define the operator $\pi_{L_\alpha,M_0}$ by setting,
for all $f \in T_2^p(\mathbb{R}_{+}^{n+1})\cap T_2^2(\mathbb{R}_{+}^{n+1})$ and $x \in \mathbb{R}^{n}$,
$$
\pi_{L_\alpha,M_0}(f)(x) := C_{(\alpha,M_0)} \int_{0}^{\infty} \lf(t^\alpha L_\alpha\r)^{M_0} e^{-t^\alpha
L_\alpha} f(x, t) \frac{d t}{t};
$$
see, for instance, \cite[Page 20]{bn22}. Let $M,M_1\in \mathbb{N}$ with $M \geq M_{1}$. We also define the
operator $\pi_{L_\alpha,M,M_1}$ by setting, for all $f \in T_2^p(\mathbb{R}_{+}^{n+1})\cap T_2^2(\mathbb{R}_{+}^{n+1})$
and $x \in \mathbb{R}^{n}$,
$$
\pi_{L_\alpha,M,M_1}(f)(x) := c_{(M,M_1)} \int_{0}^{\infty} \lf(t^\alpha L_\alpha\r)^M e^{-t^\alpha L_\alpha}
\lf(I-e^{-t^\alpha L_\alpha}\r)^{M_1+1} f(x, t) \frac{d t}{t}.
$$

Recalled that the space $\widetilde{{T}_{2}^p}(\mathbb{R}_{+}^{n+1})$ defined by Definition \ref{d2.19} is the set of all functions $f$ have the expression of infinite sum of $T_2^p$-atoms. As a corollary of \cite[Lemma 4.7]{jy112}, we have the following result.
\begin{lemma}\label{l4.2}
	$T_{2, c}^{2}(\mathbb{R}_{+}^{n+1})$ is dense in $\widetilde{{T}_{2}^p}(\mathbb{R}_{+}^{n+1})$.
\end{lemma}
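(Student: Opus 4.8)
The plan is to combine a partial-sum truncation of the atomic decomposition with a cut-off in the time variable, so that everything reduces to the completion estimate of Lemma~\ref{l2.20}.

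First I would take $f\in\widetilde{{T}_{2}^p}(\mathbb{R}_{+}^{n+1})$ and, using Definition~\ref{d2.19}, fix a decomposition $f=\sum_{j=0}^{\infty}\lambda_j a_j$, converging in $({T}_{2}^{p,\infty}(\mathbb{R}_{+}^{n+1}))^{*}$, where each $a_j$ is a $T_2^p$-atom supported in a tent $\widehat{B_j}$ and $\{\lambda_j\}_j\in\ell^p$. Put $f_N:=\sum_{j=0}^{N}\lambda_j a_j$. Since the tail $f-f_N=\sum_{j>N}\lambda_j a_j$ is again a decomposition of this type, the definition of $\|\cdot\|_{\widetilde{{T}_{2}^p}}$ gives $\|f-f_N\|_{\widetilde{{T}_{2}^p}}\le(\sum_{j>N}|\lambda_j|^p)^{1/p}\to0$ as $N\to\infty$. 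Hence it suffices to approximate, in $\widetilde{{T}_{2}^p}(\mathbb{R}_{+}^{n+1})$, each finite sum $f_N$ by elements of $T_{2,c}^2(\mathbb{R}_{+}^{n+1})$.

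Next I would note that $f_N\in T_2^2(\mathbb{R}_{+}^{n+1})$ (each atom satisfies $\int_{\widehat{B_j}}|a_j|^2\,\frac{dxdt}{t}\le|B_j|^{1-2/p}<\infty$) and that $\supp f_N\subseteq\bigcup_{j=0}^{N}\widehat{B_j}$, a bounded subset of $\mathbb{R}^n\times(0,\infty)$; but $f_N$ is in general \emph{not} compactly supported in $\mathbb{R}_{+}^{n+1}$ because the tents $\widehat{B_j}$ accumulate at $\{t=0\}$. So for $\ez>0$ I set $f_N^\ez:=f_N\,\chi_{\{t>\ez\}}$; this is supported in a compact subset of $\mathbb{R}_{+}^{n+1}$ and still lies in $T_2^2(\mathbb{R}_{+}^{n+1})$, hence $f_N^\ez\in T_{2,c}^2(\mathbb{R}_{+}^{n+1})$. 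The remaining task is to show $\|f_N-f_N^\ez\|_{\widetilde{{T}_{2}^p}}=\|f_N\,\chi_{\{t\le\ez\}}\|_{\widetilde{{T}_{2}^p}}\to0$ as $\ez\to0^+$. By Lemma~\ref{l2.20} this is $\ls\|f_N\,\chi_{\{t\le\ez\}}\|_{T_2^p}=\|A(f_N\chi_{\{t\le\ez\}})\|_{L^p(\rn)}$. For a.e.\ $x$ we have $A(f_N)(x)<\infty$ and $A(f_N\chi_{\{t\le\ez\}})(x)^2=\int_{\Gamma(x)\cap\{t\le\ez\}}|f_N(y,t)|^2\,\frac{dydt}{t}\downarrow0$ as $\ez\to0^+$, while $A(f_N\chi_{\{t\le\ez\}})\le A(f_N)\in L^2(\rn)$, which has bounded support and hence lies in $L^p(\rn)$; the dominated convergence theorem then yields $\|A(f_N\chi_{\{t\le\ez\}})\|_{L^p(\rn)}\to0$.

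Finally, a standard $\delta/2$ argument combines the two steps: given $\delta>0$, choose $N$ with $\|f-f_N\|_{\widetilde{{T}_{2}^p}}<\delta/2$ and then $\ez$ with $\|f_N-f_N^\ez\|_{\widetilde{{T}_{2}^p}}<\delta/2$, so that $f_N^\ez\in T_{2,c}^2(\mathbb{R}_{+}^{n+1})$ and $\|f-f_N^\ez\|_{\widetilde{{T}_{2}^p}}<\delta$. I expect the main obstacle to be the cut-off estimate in the third paragraph: one must first descend from the $\widetilde{{T}_{2}^p}$-norm to the $T_2^p$-norm via Lemma~\ref{l2.20}, and then use the boundedness of $\supp f_N$ to place $A(f_N)$ in $L^p(\rn)$ and run dominated convergence; by contrast, the partial-sum reduction and the check that $f_N^\ez\in T_{2,c}^2(\mathbb{R}_{+}^{n+1})$ are routine.
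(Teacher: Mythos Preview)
Your argument is correct. The partial-sum truncation together with the time cut-off and dominated convergence, controlled via Lemma~\ref{l2.20}, gives exactly the density claim; the observation that $\supp A(f_N)\subset\bigcup_{j\le N}2B_j$ is bounded, so $A(f_N)\in L^2$ with bounded support and hence $A(f_N)\in L^p$, is the only point that needs care, and you handle it.

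The paper does not give its own proof here: it simply records Lemma~\ref{l4.2} as a corollary of \cite[Lemma~4.7]{jy112}, which establishes the analogous density in the more general Orlicz--Hardy tent-space framework. Your route is therefore more elementary and self-contained: it avoids importing the Orlicz machinery and works directly with the $T_2^p$-atomic structure and Lemma~\ref{l2.20}. The trade-off is that the cited lemma covers a wider class of spaces at once, whereas your argument is tailored to the power case; for the purposes of this paper, though, your direct proof is entirely adequate and arguably clearer.
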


Applying this, we can deduce the following conclusions.
\begin{lemma}\label{l4.3}
Let $p\in(\frac{n}{n+\alpha},1]$ and $M,M_1\in \mathbb{N}$ with $M \geq M_{1} \geq \frac{n}{\alpha}
(\frac{1}{p}-1)$. Then the operator $\pi_{L_\alpha,M,M_1}$, initially defined on
$T_2^p(\mathbb{R}_{+}^{n+1})\cap T_2^2(\mathbb{R}_{+}^{n+1})$, extends to a bounded linear operator:
\begin{itemize}
\item[{\rm (i)}]from $T_{2}^{p}(\mathbb{R}_{+}^{n+1})$ to $H_{L_{\alpha}}^{p}(\mathbb{R}^{n})$,
			
\item[{\rm (ii)}]from $\widetilde{{T}_{2}^p}(\mathbb{R}_{+}^{n+1})$ to $H_{L_{\alpha}}^{p}(\mathbb{R}^{n})$.
\end{itemize}
\end{lemma}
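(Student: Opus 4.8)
The plan is to establish boundedness of $\pi_{L_\alpha,M,M_1}$ first on atoms, then pass to the two target spaces. The key observation, supplied by Lemma \ref{l2.21}, is that if $A$ is a $T_2^p$-atom supported in $\widehat B$, then (for the choice of exponents there, $M\geq 2M_1$, so I would first reduce to that case by relabeling, or apply Lemma \ref{l2.21} with $M_1$ replaced by a suitable parameter so that $\pi_{L_\alpha,M,M_1}(A)$ matches the displayed integral) the function $\pi_{L_\alpha,M,M_1}(A)$ is, up to a uniform multiplicative constant, a $(p,2,M,\epsilon)_{L_\alpha}$-molecule associated to $B$, with $\epsilon=\alpha+n-\frac np>0$. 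Hence $\|\pi_{L_\alpha,M,M_1}(A)\|_{H_{L_\alpha}^p(\rn)}\lesssim 1$ uniformly in $A$.

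For part (i), I would take $f\in T_2^p(\mathbb{R}_+^{n+1})\cap T_2^2(\mathbb{R}_+^{n+1})$ and invoke Lemma \ref{l2.18} to write $f=\sum_j\lambda_j A_j$, with convergence in both $T_2^p$ and $T_2^2$ and $\sum_j|\lambda_j|^p\lesssim\|f\|_{T_2^p}^p$. The $T_2^2$-convergence together with the $L^2$-boundedness of $\pi_{L_\alpha,M,M_1}$ (which follows from the $L^2$-functional calculus / quadratic estimates for the self-adjoint operator $L_\alpha$, exactly as in the proof of Lemma \ref{l2.14}) lets me justify $\pi_{L_\alpha,M,M_1}(f)=\sum_j\lambda_j\pi_{L_\alpha,M,M_1}(A_j)$ in $L^2(\rn)$. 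Since each summand is a multiple of a molecule, this is a molecular $(p,2,M,\epsilon)_{L_\alpha}$-representation, so by Definition \ref{d2.6} and Remark \ref{r2.8},
\[
\|\pi_{L_\alpha,M,M_1}(f)\|_{H_{L_\alpha}^p(\rn)}\lesssim\lf(\sum_j|\lambda_j|^p\r)^{1/p}\lesssim\|f\|_{T_2^p}.
\]
Since $T_2^p\cap T_2^2$ is dense in $T_2^p$, this extends $\pi_{L_\alpha,M,M_1}$ to a bounded operator $T_2^p(\mathbb{R}_+^{n+1})\to H_{L_\alpha}^p(\rn)$.

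For part (ii), I would use Lemma \ref{l2.20} (the inclusion $T_2^p\hookrightarrow\widetilde{T_2^p}$ is bounded) and Lemma \ref{l4.2} ($T_{2,c}^2$ is dense in $\widetilde{T_2^p}$). Given $f\in\widetilde{T_2^p}(\mathbb{R}_+^{n+1})$ with atomic decomposition $f=\sum_j\lambda_j A_j$ converging in $(T_2^{p,\infty})^*$ and $\sum_j|\lambda_j|^p<\infty$, I set $\pi_{L_\alpha,M,M_1}(f):=\sum_j\lambda_j\pi_{L_\alpha,M,M_1}(A_j)$, the series converging in $H_{L_\alpha}^p(\rn)$ because $H_{L_\alpha}^p$ is complete (it is a quasi-Banach space), the tail sums have $p$-quasi-norm controlled by $\sum_{j\geq N}|\lambda_j|^p\to0$, and each term is a uniformly bounded multiple of a molecule; taking the infimum over representations gives $\|\pi_{L_\alpha,M,M_1}(f)\|_{H_{L_\alpha}^p(\rn)}\lesssim\|f\|_{\widetilde{T_2^p}}$. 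The one point requiring care is well-definedness: I must check that this extension agrees with the part-(i) operator on $T_2^p\cap T_2^2$ and is independent of the chosen decomposition. This is done by testing against the dual pairing with $\mathrm{BMO}_{L_\alpha,M_0}^{1/p-1}(\rn)=\lf(H_{L_\alpha}^p(\rn)\r)^*$ (Theorem \ref{t2.11}) via Lemma \ref{l2.14}: for $g$ in (a dense subclass of) that BMO space, $\langle\pi_{L_\alpha,M,M_1}(f),g\rangle$ depends only on $f$ as an element of $(T_2^{p,\infty})^*$, so different decompositions yield the same functional.

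\textbf{The main obstacle} I anticipate is precisely this consistency/well-definedness step in (ii): the two atomic decompositions (the one in $T_2^p$ from Lemma \ref{l2.18} and the one defining $\widetilde{T_2^p}$) converge in different topologies, and $\widetilde{T_2^p}$ is genuinely larger than $T_2^p$, so one cannot simply quote $L^2$-convergence. The resolution is to route everything through the duality $\langle\cdot,\cdot\rangle$ against $\mathrm{BMO}_{L_\alpha,M_0}^{1/p-1}(\rn)$, using Lemma \ref{l2.14} to rewrite the pairing $\int_\rn\pi_{L_\alpha,M,M_1}(f)(x)\,g(x)\,dx$ as an integral over $\mathbb{R}_+^{n+1}$ of $f$ against a function lying in $T_{2,0}^{p,\infty}\subset T_2^{p,\infty}$ (built from $g$ via the heat semigroup, by Lemma \ref{l2.15}), so that the value is manifestly determined by $f\in(T_2^{p,\infty})^*$ and not by the decomposition. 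All other steps—the molecular estimate, the quasi-norm summation, the density arguments—are routine given the cited lemmas.
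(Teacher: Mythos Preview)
Your proposal is correct and follows essentially the same route as the paper: part (i) via the atomic decomposition of Lemma \ref{l2.18} together with the molecular estimate of Lemma \ref{l2.21}, and part (ii) via the BMO-duality (Theorem \ref{t2.11} and Lemma \ref{l2.15}) to handle convergence of the atomic sum. The only cosmetic difference is that for (ii) the paper first establishes the $\widetilde{T_2^p}$-bound on the dense subspace $T_{2,c}^2(\mathbb{R}_+^{n+1})$ (where $\pi_{L_\alpha,M,M_1}$ is already defined by part (i)) and then extends by Lemma \ref{l4.2}, thereby sidestepping the well-definedness issue you anticipate; your observation about the parameter mismatch with Lemma \ref{l2.21} is also apt, as the paper invokes that lemma without further comment.
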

\begin{proof}
We first prove (i). For $F\in T_2^p(\mathbb{R}_{+}^{n+1})\cap T_2^2(\mathbb{R}_{+}^{n+1})$, we have
$F=\sum_{i=0}^\infty\lz_iA_i$, where $A_i$ are $T_2^p$-atom and $\lf(\sum_{i}|\lz_i|^p\r)^\frac{1}{p}\ls \|F\|_{T_2^p}$. Hence,
\begin{align*}
\pi_{L_\alpha,M,M_1}(F)&=c_{(M,M_1)}\int_{0}^\infty\lf(t^\alpha L_\alpha\r)^M e^{-t^\alpha L_\alpha}
\lf(I-e^{-t^\alpha L_\alpha}\r)^{M_1+1} F(\cdot, t) \frac{d t}{t}\\
&=\sum_{i=0}^\infty \lz_ic_{(M,M_1)}\int_{0}^\infty\lf(t^\alpha L_\alpha\r)^M e^{-t^\alpha L_\alpha}
\lf(I-e^{-t^\alpha L_\alpha}\r)^{M_1+1} A_i(\cdot, t) \frac{d t}{t}\\
&=:\sum_{i=0}^\infty \lz_i m_i.
\end{align*}
From Lemma \ref{l2.21}, we deduce that $m_i$ is a $(p, 2, M, \epsilon)_{L_{\alpha}}$-molecule with
$\ez=\alpha+n-\frac{n}{p}>0$, which further implies that
\begin{align*}
\|\pi_{L_\alpha,M,M_1}(F)\|_{H_{L_\alpha}^p(\mathbb{R}^n)}\le\lf(\sum_{i=0}^\infty|\lz_i|^p\r)^\frac{1}{p}
\ls \|F\|_{T_2^p}.
\end{align*}
	
Now we turn to prove (ii). Let $f \in T_{2, c}^{2}(\mathbb{R}_{+}^{n+1})$. Notice that $T_{2, c}^{2}
(\mathbb{R}_{+}^{n+1})\subset T_2^p(\mathbb{R}_{+}^{n+1})$. It then follows from (i) that
$\pi_{L_\alpha, M, M_{1}}(f) \in H_{L_\alpha}^p(\mathbb{R}^n)$. Moreover, since $T_2^p(\mathbb{R}_{+}^{n+1})
\subset \widetilde{{T}_2^p}(\mathbb{R}_{+}^{n+1})$, by Definition \ref{d2.19}, there exist $T_2^p$-atoms
$\{a_{j}\}_{j}$ and $\{\lambda_{j}\}_{j} \subset \mathbb{C}$ such that $f=\sum_{j} \lambda_{j}
a_{j}$, where the series converges in $(T_2^{p,\infty}(\mathbb{R}_{+}^{n+1}))^{*}$, and
$(\sum_{j}|\lambda_{j}|^p)^\frac{1}{p} \lesssim\|f\|_{\widetilde{{T}_2^p}}$. Thus, for any
$g \in \mathrm{BMO}_{L_\alpha,M_1}^{\frac{1}{p}-\frac{1}{2}}(\mathbb{R}^{n})$, by Theorem \ref{t2.11}
and Lemma \ref{l2.15}, we obtain that
\begin{align*}
\left\langle\pi_{L_\alpha, M, M_{1}}(f), g\right\rangle & =\int_{\mathbb{R}_{+}^{n+1}} f(x, t) \lf(t^\alpha L_\alpha\r)^M e^{-t^\alpha L_\alpha}
\lf(I-e^{-t^\alpha L_\alpha}\r)^{M_1+1}g(x) \frac{d x d t}{t} \\
& =\sum_{j} \lambda_{j} \int_{\mathbb{R}_{+}^{n+1}} a_{j}(x, t) \lf(t^\alpha L_\alpha\r)^M
e^{-t^\alpha L_\alpha}\lf(I-e^{-t^\alpha L_\alpha}\r)^{M_1+1}g(x) \frac{d x d t}{t} \\
& =\sum_{j} \lambda_{j}\left\langle\pi_{L_\alpha, M, M_{1}}\left(a_{j}\right), g\right\rangle,
\end{align*}
which implies that
$$\pi_{L_\alpha, M, M_{1}}(f)=\sum_{j} \lambda_{j} \pi_{L_\alpha, M, M_{1}}\left(a_{j}\right)$$
holds in $(\mathrm{BMO}_{L_\alpha,M_1}^{\frac{1}{p}-\frac{1}{2}}(\mathbb{R}^{n}))^{*}$ and hence,
$$
\left\|\pi_{L_\alpha, M, M_{1}}(f)\right\|_{H_{L_{\alpha}}^{p}(\mathbb{R}^{n})} \leq \lf(\sum_{j}
\left|\lambda_{j}\right|^p\r)^\frac{1}{p} \lesssim\|f\|_{\widetilde{{T}_2^p}}.
$$
By Lemma \ref{l4.2}, we have that $T_{2, c}^{2}(\mathbb{R}_{+}^{n+1})$ is dense in $\widetilde{{T}_2^p}
(\mathbb{R}_{+}^{n+1})$. Using this and a density argument, we deduce that (ii) holds, which completes the
proof of Lemma \ref{l4.3}.
\end{proof}

\begin{lemma}\label{l4.4}
Let $p\in(\frac{n}{n+\alpha},1]$ and $M_0\in \mathbb{N}$ with $M_0>\frac{n}{p\alpha}$. Then the operator $\pi_{L_\alpha,M_0}$, initially defined on $T_2^p(\mathbb{R}_{+}^{n+1})\cap T_2^2(\mathbb{R}_{+}^{n+1})$, extends to a bounded linear operator:
\begin{itemize}
\item[{\rm (i)}] from $T_{2}^{2}(\mathbb{R}_{+}^{n+1})$ to $L^{2}(\mathbb{R}^{n})$,
		
\item[{\rm (ii)}] from $T_{2, v}^{p,\infty}(\mathbb{R}_{+}^{n+1})$ to $\mathrm{VMO}_{L_{\alpha}, M_0}^{\frac{1}{p}-1}(\mathbb{R}^{n})$.
\end{itemize}
\end{lemma}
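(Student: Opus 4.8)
The plan is to establish the two mapping properties in turn, obtaining (ii) from (i) together with the duality results recorded above and a density argument on tent spaces. For part (i) I would argue by $L^2$-duality: given $F\in T_2^2(\mathbb{R}_+^{n+1})\cap T_2^p(\mathbb{R}_+^{n+1})$ and $g\in L^2(\rn)$, self-adjointness of $L_\alpha$ (hence of $(t^\alpha L_\alpha)^{M_0}e^{-t^\alpha L_\alpha}$) together with Fubini's theorem gives
$$
\langle\pi_{L_\alpha,M_0}(F),g\rangle=C_{(\alpha,M_0)}\int_{\mathbb{R}_+^{n+1}}F(x,t)\,\overline{(t^\alpha L_\alpha)^{M_0}e^{-t^\alpha L_\alpha}g(x)}\,\frac{dx\,dt}{t}.
$$
By Proposition \ref{p2.13}(ii) with $p=p'=2$ and the Cauchy--Schwarz inequality this is at most $C\|F\|_{T_2^2}\|A(G_g)\|_{L^2(\rn)}$, where $G_g(x,t):=(t^\alpha L_\alpha)^{M_0}e^{-t^\alpha L_\alpha}g(x)$, and Fubini together with the spectral theorem (using the elementary bound $\int_0^\infty|s^{M_0}e^{-s}|^2\,\frac{ds}{s}<\infty$) yields $\|A(G_g)\|_{L^2(\rn)}\sim\|G_g\|_{T_2^2}\ls\|g\|_{L^2(\rn)}$. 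Taking the supremum over $\|g\|_{L^2(\rn)}\le1$ gives $\|\pi_{L_\alpha,M_0}(F)\|_{L^2(\rn)}\ls\|F\|_{T_2^2}$, and since $T_2^2(\mathbb{R}_+^{n+1})\cap T_2^p(\mathbb{R}_+^{n+1})$ is dense in $T_2^2(\mathbb{R}_+^{n+1})$ (e.g.\ via compactly supported elements) this extends $\pi_{L_\alpha,M_0}$ to a bounded operator $T_2^2(\mathbb{R}_+^{n+1})\to L^2(\rn)$.

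For part (ii), \emph{Step 1 (boundedness into BMO)}: let $f\in T_{2,c}^{p,\infty}(\mathbb{R}_+^{n+1})=T_{2,c}^2(\mathbb{R}_+^{n+1})$; such $f$ also lies in $T_2^p(\mathbb{R}_+^{n+1})$, so by (i) $\pi_{L_\alpha,M_0}(f)\in L^2(\rn)$, and a routine verification gives $\pi_{L_\alpha,M_0}(f)\in\mathcal{M}_\alpha(\rn)$. For $g\in H_{L_\alpha}^p(\rn)\cap L^2(\rn)$, the same self-adjointness/Fubini computation as in (i), followed by Proposition \ref{p2.13}(iii), bounds $|\langle\pi_{L_\alpha,M_0}(f),g\rangle|$ by $C\|f\|_{T_2^{p,\infty}}\|G_g\|_{T_2^p}$; since the generalized square function $g\mapsto A(G_g)$ defines an equivalent quasi-norm on $H_{L_\alpha}^p(\rn)$ (a consequence of the molecular characterization of Theorem \ref{t2.7}), the last factor is $\ls\|g\|_{H_{L_\alpha}^p(\rn)}$. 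Because our hypothesis $M_0>\max\{\frac{n}{2\alpha}+1,\frac{n}{p\alpha}\}$ is exactly the one required in Theorem \ref{t2.11} (note $\frac{n+2\alpha}{2\alpha}=\frac{n}{2\alpha}+1$), that theorem then forces $\pi_{L_\alpha,M_0}(f)\in\mathrm{BMO}_{L_\alpha,M_0}^{\frac1p-1}(\rn)$ with $\|\pi_{L_\alpha,M_0}(f)\|_{\mathrm{BMO}_{L_\alpha,M_0}^{\frac1p-1}(\rn)}\ls\|f\|_{T_2^{p,\infty}}$.

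\emph{Step 2 (VMO for compactly supported $f$)}: fix $f\in T_{2,c}^2(\mathbb{R}_+^{n+1})$, supported in a compact subset of $\rn\times(\varepsilon_0,R_0)$ contained in some $\widehat{B_0}$. Since $\|(t^\alpha L_\alpha)^{M_0}L_\alpha^k e^{-t^\alpha L_\alpha}\|_{L^2\to L^2}\ls t^{-\alpha k}$ with $t\in(\varepsilon_0,R_0)$ on the support, $h:=\pi_{L_\alpha,M_0}(f)\in D(L_\alpha^k)$ for every $k$. To check $\gamma_1(h)=0$, for $r_B$ small the spectral bound $\|(I-e^{-r_B^\alpha L_\alpha})^{M_0+1}h\|_{L^2(\rn)}\ls r_B^{\alpha(M_0+1)}\|L_\alpha^{M_0+1}h\|_{L^2(\rn)}$ controls the normalized oscillation of $h$ over $B$ by a constant times $r_B^{\alpha(M_0+1)-n(\frac1p-\frac12)}$, and the exponent is positive since $M_0>\frac{n}{p\alpha}$. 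For $\gamma_2(h)=0$, expanding $(I-e^{-r_B^\alpha L_\alpha})^{M_0+1}$ into a finite sum of contractions $e^{-ir_B^\alpha L_\alpha}$ gives $L^2$-norm $\ls\|h\|_{L^2(\rn)}$, and dividing by $|B|^{\frac1p-\frac12}\sim r_B^{n(\frac1p-\frac12)}$ forces the limit to be $0$ as $r_B\to\infty$ (here $\frac1p-\frac12\ge\frac12>0$). For $\gamma_3(h)=0$, this last estimate handles balls of large radius, while for $B\subset[B(0,c)]^{\complement}$ of bounded radius we have $\dist(B,B_0)\gtrsim c$, so the off-diagonal factor $(\cdots)^{-n-\alpha}$ in the kernel bound of $(t^\alpha L_\alpha)^{M_0}e^{-(t^\alpha+ir_B^\alpha)L_\alpha}$ from Proposition \ref{p2.3} is $\ls c^{-(n+\alpha)/2}$, the $D_\sz$ weights being harmless because $x\in B$ is far from the origin. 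Hence $\gamma_1(h)=\gamma_2(h)=\gamma_3(h)=0$ and $\pi_{L_\alpha,M_0}(f)\in\mathrm{VMO}_{L_\alpha,M_0}^{\frac1p-1}(\rn)$.

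\emph{Step 3 (density and conclusion)}: by Step 1, $\pi_{L_\alpha,M_0}$ maps $T_{2,c}^2(\mathbb{R}_+^{n+1})$, equipped with the $T_2^{p,\infty}$-quasi-norm, boundedly into $\mathrm{BMO}_{L_\alpha,M_0}^{\frac1p-1}(\rn)$; since $T_{2,c}^{p,\infty}(\mathbb{R}_+^{n+1})$ is dense in $T_{2,v}^{p,\infty}(\mathbb{R}_+^{n+1})$ by definition, $\pi_{L_\alpha,M_0}$ extends uniquely to a bounded operator $T_{2,v}^{p,\infty}(\mathbb{R}_+^{n+1})\to\mathrm{BMO}_{L_\alpha,M_0}^{\frac1p-1}(\rn)$. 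Because the functionals $\gamma_1,\gamma_2,\gamma_3$ of Definition \ref{d3.1} are dominated by the $\mathrm{BMO}$-norm, $\mathrm{VMO}_{L_\alpha,M_0}^{\frac1p-1}(\rn)$ is closed in $\mathrm{BMO}_{L_\alpha,M_0}^{\frac1p-1}(\rn)$, and it contains $\pi_{L_\alpha,M_0}(T_{2,c}^2(\mathbb{R}_+^{n+1}))$ by Step 2, so the image of $T_{2,v}^{p,\infty}(\mathbb{R}_+^{n+1})$ lies in it, which is (ii). The main obstacle is Step 2: the cancellation factor $(I-e^{-r_B^\alpha L_\alpha})^{M_0+1}$ must be balanced against the normalizing weight $|B|^{\frac1p-\frac12}$ and, more delicately, against the weights $D_\sz$, which — unlike in the Poisson or Davies--Gaffney settings — are unbounded near the origin, so the estimates must exploit that the spatial support of $f$ is fixed; everything else reduces to Propositions \ref{p2.13} and \ref{p2.3}, Theorems \ref{t2.7} and \ref{t2.11}, and standard spectral-calculus bounds.
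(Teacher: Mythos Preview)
Your argument for (i) is essentially the paper's, which simply cites the same $L^2$-duality/spectral computation from \cite{bn22}.

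For (ii) the two routes diverge. The paper does not verify $\gamma_1=\gamma_2=\gamma_3=0$ directly and never invokes Theorem~\ref{t2.11}. Instead it passes through the tent-space characterization of $\mathrm{VMO}$ (Theorem~\ref{t3.4}): choosing auxiliary integers $M\ge 2M_1$, it shows that $(t^\alpha L_\alpha)^M e^{-t^\alpha L_\alpha}(I-e^{-t^\alpha L_\alpha})^{2M_1+1}\pi_{L_\alpha,M_0}(f)\in T_{2,v}^{p,\infty}(\mathbb{R}_+^{n+1})$ by proving, for every ball $B$, a pointwise-in-$B$ estimate
\[
\frac{1}{|B|^{\frac1p-\frac12}}\left(\int_{\widehat B}\bigl|(t^\alpha L_\alpha)^M e^{-t^\alpha L_\alpha}(I-e^{-t^\alpha L_\alpha})^{2M_1+1}\pi_{L_\alpha,M_0}(f)\bigr|^2\,\frac{dx\,dt}{t}\right)^{1/2}\lesssim\sum_{k\ge1}2^{-k\beta}\,\delta_k(f,B),
\]
where $\delta_k(f,B)$ is the localized tent-space quantity over $\widehat{2^{k+1}B}$. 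This is obtained by splitting $f=f\chi_{\widehat{4B}}+f\chi_{(\widehat{4B})^c}$ and applying the kernel bound of Proposition~\ref{p2.3} to the composed operator $(t^\alpha L_\alpha)^M e^{-t^\alpha L_\alpha}(I-e^{-t^\alpha L_\alpha})^{2M_1+1}(s^\alpha L_\alpha)^{M_0}e^{-s^\alpha L_\alpha}$. The three vanishing conditions on $f$ then pass through the summable series; no density step, no $H^p$-duality, and no compact-support case analysis are needed.

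Your approach is more modular, but Step~1 contains a genuine gap: you need $\|(t^\alpha L_\alpha)^{M_0}e^{-t^\alpha L_\alpha}g\|_{T_2^p}\lesssim\|g\|_{H_{L_\alpha}^p(\mathbb{R}^n)}$, that is, the $M_0$-th order area function characterizes $H_{L_\alpha}^p$. The paper defines $H_{L_\alpha}^p$ only via the \emph{first}-order square function (Definition~\ref{d2.4}), and nothing stated in the paper --- in particular not Theorem~\ref{t2.7} --- yields the higher-order version. In the Davies--Gaffney or Poisson-bound settings this is routine, but here the $D_\sigma$ weights obstruct the usual $L^2$ off-diagonal argument, and proving it from the molecular description would require checking $A\bigl((t^\alpha L_\alpha)^{M_0}e^{-t^\alpha L_\alpha}m\bigr)\in L^p$ uniformly over molecules, which is of the same order of difficulty as the paper's estimate above. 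Without this, your bound $|\langle\pi_{L_\alpha,M_0}(f),g\rangle|\lesssim\|f\|_{T_2^{p,\infty}}\|g\|_{H_{L_\alpha}^p}$ is unjustified and Theorem~\ref{t2.11} cannot be applied.

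One smaller point: the lemma's hypothesis is only $M_0>\frac{n}{p\alpha}$, not $M_0>\max\{\frac{n}{2\alpha}+1,\frac{n}{p\alpha}\}$ as you assert (though the paper's own proof, via Theorem~\ref{t3.4}, also tacitly uses the stronger condition).
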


\begin{proof}
It was showed in \cite[Page 20]{bn22} that (i) holds.

Let us now prove (ii). Let $M,M_1\in \mathbb{N}$ with $M_1\geq \frac{n}{2\alpha}-1$ and $M\geq 2M_1$.
Suppose that $f\in T_{2,v}^{p,\infty}\lf(\mathbb{R}_+^{n+1}\r)$. To show that $\pi_{L_\alpha,M_0}(f)\in
\mathrm{VMO}_{L_\alpha,M_0}^{\frac{1}{p}-1}\lf(\mathbb{R}^{n}\r)$, by Theorem \ref{t3.4}, we only need to
prove that
$$\lf(t^\alpha L_\alpha\r)^M e^{-t^\alpha L_\alpha} \lf(I-e^{-t^\alpha L_\alpha}\r)^{2M_1+1}
\pi_{L_\alpha,M_0}(f)\in T_{2,v}^{p,\infty}\lf(\mathbb{R}_+^{n+1}\r).$$
We first claim that, for any ball $B$,
\begin{align}\label{4.2}
&\frac{1}{|B|^{\frac{1}{p}-\frac{1}{2}}}\lf(\int_{\widehat{B}}\lf(t^\alpha L_\alpha\r)^M e^{-t^\alpha L_\alpha}
\lf(I-e^{-t^\alpha L_\alpha}\r)^{2M_1+1}\pi_{L_\alpha,M_0}(f)\frac{dxdt}{t}\r)^\frac{1}{2}\nonumber\\
&\quad\ls \sum_{k=1}^\infty 2^{-k\alpha M-\frac{kn}{p}-\frac{kn}{2}}\dz_k(f,B),
\end{align}
where
\begin{align*}
\dz_k(f,B):=\frac{1}{|2^kB|^{\frac{1}{p}-\frac{1}{2}}}\lf(\int_{\widehat{2^{k+1}B}}
|f(x,t)|^2\frac{dxdt}{t}\r)^\frac{1}{2}.
\end{align*}
	
To prove \eqref{4.2}, let $f_1:=f \chi_{\widehat{4B}}$ and $f_2:=f \chi_{(\widehat{4B})^{\complement}}$. Then
\begin{align*}
&\lf(\int_{\widehat{B}}|\lf(t^\alpha L_\alpha\r)^M e^{-t^\alpha L_\alpha} \lf(I-e^{-t^\alpha L_\alpha}
\r)^{2M_1+1}\pi_{L_\alpha,M_0}(f)|^2\frac{dxdt}{t}\r)^\frac{1}{2}\\
&\quad\le \sum_{i=1}^2\lf(\int_{\widehat{B}}|\lf(t^\alpha L_\alpha\r)^M e^{-t^\alpha L_\alpha}
\lf(I-e^{-t^\alpha L_\alpha}\r)^{2M_1+1}\pi_{L_\alpha,M_0}(f_i)|^2\frac{dxdt}{t}\r)^\frac{1}{2}\\
&\quad=:\sum_{i=1}^2 \mathrm{I}_i.
\end{align*}
For the term $\mathrm{I}_1$, by Proposition \ref{p2.3}, Theorem \ref{t2.2} and (i), we obtain
\begin{align*}
\mathrm{I}_1& =\left(\int_{\widehat{B}} \left|\lf(t^\alpha L_\alpha\r)^M e^{-t^\alpha L_\alpha}
\lf(I-e^{-t^\alpha L_\alpha}\r)^{2M_1+1}\pi_{L_\alpha,M_0}(f_1) \right|^{2} \frac{d x d t}{t}\right)^\frac{1}{2}\nonumber\\
&=\left(\int_{\widehat{B}} \left|\sum_{i=0}^{2M_1+1}C_i^{2M_1+1}\lf(t^\alpha
L_\alpha\r)^Me^{-(i+1)t^\alpha L_\alpha}\pi_{L_\alpha,M_0}(f_1) \right|^{2} \frac{d x d t}{t}\right)^\frac{1}{2}\nonumber\\
&\ls \sum_{i=0}^{2M_1+1}\lf(\int_0^{r_B}|B|\lf(\frac{2^jr_B}{t}\r)^{2n}\lf(\frac{2^jr_B}{t}\r)^{-2n-2\alpha}\lf(\fint_{S_{j}(B)}|\pi_{L_\alpha,M_0}(f_1)|^2dx\r)\frac{dt}{t}\r)^\frac{1}{2}\nonumber\\
&\ls \sum_{i=0}^{2M_1+1}\lf(\int_0^{r_B}\frac{|B|}{|2^jB|}\lf(\frac{2^jr_B}{t}\r)^{-2\alpha}\|\pi_{L_\alpha,M_0}(f_1) \|_{L^2{(\mathbb{R}^n)}}^2\frac{dt}{t}\r)^\frac{1}{2}\nonumber\\
&\ls \|\pi_{L_\alpha,M_0}(f_1)\|_{L^2\lf(\mathbb{R}^{n}\r)}\ls \|f_1\|_{T_2^2}\sim\|A(f_1)\|_{L^2\lf(\mathbb{R}^{n}\r)}\\
&\sim\lf(\int_{\mathbb{R}^{n}}\int_{\Gamma(x)}|f_1(y,t)|^2\frac{dydt}{t^{n+1}}dx\r)^\frac{1}{2}\sim\lf(\int_{\mathbb{R}^{n}}\int_0^\infty\int_{|x-y|\le t}|f_1(y,t)|^2\frac{dydt}{t^{n+1}}dx\r)^\frac{1}{2}\\
&\sim\lf(\int_{\mathbb{R}^{n}}\int_0^\infty\int_{|x-y|\le t}|f(y,t)\chi_{\widehat{4B}}|^2\frac{dxdt}{t^{n+1}}dy\r)^\frac{1}{2}\\
&\ls\lf(\int_{\widehat{4B}}\lf(\int_{|x-y|\le t}1dx\r)|f(y,t)|^2\frac{dydt}{t^{n+1}}\r)^\frac{1}{2}\\
&\ls \lf(\int_{\widehat{4B}}|f(y,t)|^2\frac{dydt}{t}\r)^\frac{1}{2}\sim|2B|^{\frac{1}{p}-\frac{1}{2}}\dz_1 (f,B).
\end{align*}
To estimate $\mathrm{I}_2$,	using the definition of $\pi_{L_\alpha,M_0}(f)$, we write
\begin{align*}
&\lf(t^\alpha L_\alpha\r)^M e^{-t^\alpha L_\alpha}\lf(I-e^{-t^\alpha L_\alpha}\r)^{2M_1+1}\pi_{L_\alpha,M_0}(f_2)(x)\\
&\hs=\int_{0}^\infty \lf(t^\alpha L_\alpha\r)^M e^{-t^\alpha L_\alpha}\lf(I-e^{-t^\alpha L_\alpha}\r)^{2M_1+1}
\lf(s^\alpha L_\alpha\r)^{M_0} e^{-s^\alpha L_\alpha}(f_2)(x)\frac{ds}{s}\\
&\hs=\int_{0}^\infty t^{\alpha M}s^{\alpha {M_0}} L_\alpha^{M+{M_0}} e^{-(t^\alpha+s^\alpha) L_\alpha}
\sum_{j=0}^{2M_1+1}C_j^{2M_1+1}e^{-jt^\alpha L_\alpha}f_2(x)\frac{ds}{s}\\
&\hs= \sum_{j=0}^{2M_1+1}C_j^{2M_1+1}\int_{0}^\infty t^{\alpha M}s^{\alpha {M_0}} L_\alpha^{M+{M_0}}
e^{-((j+1)t^\alpha+s^\alpha)}f_2(x)\frac{ds}{s}.
\end{align*}
By Proposition \ref{p2.3}, the kernel of the operator $t^{\alpha M}s^{\alpha {M_0}} L_\alpha^{M+{M_0}}
e^{-((j+1)t^\alpha+s^\alpha)}$ has the following upper bound that, for all $x,y\in\mathbb{R}^n$,
\begin{align*}
t^{\alpha M}s^{\alpha {M_0}}(t^\alpha+s^\alpha)^{-(M+M_0+\frac{n}{\alpha})}D_\sz(x,t^\alpha+s^\alpha)
D_\sz(y,t^\alpha+s^\alpha)\lf[\frac{(t^\alpha+s^\alpha)^\frac{1}{\alpha}+|x-y|}{(t^\alpha
+s^\alpha)^\frac{1}{\alpha}}\r]^{-n-\alpha}.
\end{align*}
Notice that, for $k\geq 2$, $ (x,t)\in{\widehat{B}}$ and $(y,s)\in{\widehat{2^{k+1}B}}\backslash
{\widehat{2^{k}B}}$, we have $|x-y|\sim 2^kr_B$ and $(t^\alpha+s^\alpha)^\frac{1}{\alpha}+|x-y|\sim 2^kr_B$.
From this,  the H\"older inequality and Lemma \ref{l2.1}, we deduce that
\begin{align*}
\mathrm{I}_2&\ls\sum_{k=2}^\infty \lf(\int_{\widehat{B}}\lf[\int_{{\widehat{2^{k+1}B}}\backslash{\widehat{2^{k}B}}}t^{\alpha M}s^{\alpha {M_0}}(2^{k\alpha}r_B^\alpha)^{-(M+M_0+\frac{n}{\alpha})}D_\sz(x,2^{k\alpha}r_B^\alpha)\r.\r.\\
&\hs\lf.\lf.\times D_\sz(y,2^{k\alpha}r_B^\alpha)|f(y,s)|\frac{dyds}{s}\r]^2\frac{dxdt}{t}\r)^\frac{1}{2}\\
&\ls \sum_{k=2}^\infty \lf(\int_{\widehat{B}}\lf[\int_{{\widehat{2^{k+1}B}}\backslash{\widehat{2^{k}B}}}t^{2\alpha M}s^{2\alpha {M_0}}(2^{2k\alpha}r_B^{2\alpha})^{-(M+M_0+\frac{n}{\alpha})}D_{2\sz}(x,2^{k\alpha}r_B^\alpha)\r.\r.\\
&\hs\lf.\lf.\times D_{2\sz}(y,2^{k\alpha}r_B^\alpha)\frac{dyds}{s}\int_{\widehat{2^{k+1}B}}|f(y,s)|^2\frac{dyds}{s}\r]\frac{dxdt}{t}\r)^\frac{1}{2}\\
&\ls \sum_{k=2}^\infty \lf(\int_{\widehat{B}}\lf[\int_{{\widehat{2^{k+1}B}}\backslash{\widehat{2^{k}B}}}s^{2\alpha {M_0}}D_{2\sz}(y,2^{k\alpha}r_B^\alpha)\frac{dyds}{s}\r]t^{2\alpha M}D_{2\sz}(x,2^{k\alpha}r_B^\alpha)\frac{dxdt}{t}\r)^\frac{1}{2}\\
&\hs\times\lf(2^{k\alpha}r_B^\alpha\r)^{-(M+M_0+\frac{n}{\alpha})}|2^kB|^{\frac{1}{p}-\frac{1}{2}}\dz_k(f,B)\\
&\ls \sum_{k=2}^\infty 2^{k\alpha M_0+\frac{kn}{2}}r_B^{\alpha M_0+\frac{n}{2}}2^{-k\alpha M-k\alpha M_0-kn}r_B^{-\alpha M-\alpha M_0-n}2^{\frac{kn}{p}-\frac{kn}{2}}r_B^{\frac{n}{p}-\frac{n}{2}}\dz_k(f,B)\\
&\hs\times\lf(\int_{0}^{r_B}t^{2\alpha M-1}\lf(\lf(2^{k\alpha}r_B^\alpha\r)^\frac{n}{\alpha}+r_B^n\r)dt\r)^\frac{1}{2}\\
&\ls \sum_{k=2}^\infty 2^{-k\alpha M-\frac{kn}{p}-\frac{kn}{2}}|B|^{\frac{1}{p}-\frac{1}{2}}\dz_k(f,B),
	\end{align*}
which yields that \eqref{4.2} holds.

For $f\in T_{2,v}^{p,\infty}\lf(\mathbb{R}_+^{n+1}\r)$, by Proposition \ref{p3.2}, we have
$$
\eta_{1}(f) = \lim _{c \rightarrow 0} \sup _{\text {ball } B: r_B \leq c} \frac{1}
{|B|^{\frac{1}{p}-\frac{1}{2}}}\left(\int_{\widehat{B}}|f(y, t)|^{2} \frac{d y d t}{t}\right)^{1 / 2}=0,$$
$$\eta_{2}(f) = \lim _{c \rightarrow \infty} \sup _{\text {ball } B: r_{B} \geq c} \frac{1}{|B|^{\frac{1}{p}
-\frac{1}{2}}}\left( \int_{\widehat{B}}|f(y, t)|^{2} \frac{d y d t}{t}\right)^{1 / 2}=0,
$$
and
$$
\eta_{3}(f) = \lim _{c \rightarrow \infty}  \sup _{\text {ball }B \subset[B(0, c)]^{\complement}}
\frac{1}{|B|^{\frac{1}{p}-\frac{1}{2}}}\left( \int_{\widehat{B}}|f(y, t)|^{2} \frac{d y d t}{t}\right)^{1 / 2}
=0.
$$
Furthermore, for each $k\in\mathbb{N}$, we have
\begin{align*}
\lim_{c\rightarrow 0}\sup_{B:r_B\le c} \dz_k (f,B)=\lim_{c\rightarrow \infty}\sup_{B:r_B\geq c}
\dz_k (f,B)=\lim_{c\rightarrow \infty}\sup_{B\subset B(0,c)^{\complement}} \dz_k (f,B)=0.
\end{align*}
Then, from \eqref{4.2} and the dominated convergence theorem for series, it follows that
\begin{align*}
&\eta_{1}(\lf(t^\alpha L_\alpha\r)^M e^{-t^\alpha L_\alpha} \lf(I-e^{-t^\alpha L_\alpha}\r)^{2M_1+1}
\pi_{L_\alpha,M_0}(f))\\
&\quad\ls \sum_{k=1}^\infty 2^{-k\alpha M-\frac{kn}{p}-\frac{kn}{2}}\lim_{c \rightarrow 0}\sup_{B:r_B\le c}
\dz_k (f,B)=0,
\end{align*}
and
\begin{align*}
&\eta_{2}(\lf(t^\alpha L_\alpha\r)^M e^{-t^\alpha L_\alpha} \lf(I-e^{-t^\alpha L_\alpha}\r)^{2M_1+1}
\pi_{L_\alpha,M_0}(f))\\
&\quad=\eta_{3}(\lf(t^\alpha L_\alpha\r)^M e^{-t^\alpha L_\alpha} \lf(I-e^{-t^\alpha L_\alpha}\r)^{2M_1+1}
\pi_{L_\alpha,M_0}(f))=0,
	\end{align*}
which implies that
$$\lf(t^\alpha L_\alpha\r)^M e^{-t^\alpha L_\alpha} \lf(I-e^{-t^\alpha L_\alpha}\r)^{2M_1+1}
\pi_{L_\alpha,M_0}(f)(x)\in T_{2,v}^{p,\infty}\lf(\mathbb{R}_+^{n+1}\r).$$
This finishes the proof of Lemma \ref{l4.4}.
\end{proof}

\begin{lemma}\label{l4.5}
Let $p\in(\frac{n}{n+\alpha},1]$ and $M_0\in \mathbb{N}$ with $M_{0} \geq\max\{\frac{n+2\alpha}{2\alpha},
\frac{n}{p\alpha}\}$. Then the space $\mathrm{VMO}_{L_{\alpha}, M_0}^{\frac{1}{p}-1}(\mathbb{R}^{n})
\cap L^{2}(\mathbb{R}^{n})$ is dense in $\mathrm{VMO}_{L_{\alpha}, M_0}^{\frac{1}{p}-1}(\mathbb{R}^{n})$.
\end{lemma}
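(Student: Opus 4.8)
The plan is to realise every $f\in\mathrm{VMO}_{L_{\alpha},M_0}^{\frac1p-1}(\rn)$ as a fixed constant multiple of $\pi_{L_\alpha,M_0}(F)$ for a suitable $F\in T_{2,v}^{p,\infty}(\mathbb{R}_+^{n+1})$, and then to exploit the fact that, by definition, $T_{2,v}^{p,\infty}(\mathbb{R}_+^{n+1})$ is the closure of $T_{2,c}^{p,\infty}(\mathbb{R}_+^{n+1})=T_{2,c}^2(\mathbb{R}_+^{n+1})$ in $T_{2,1}^{p,\infty}(\mathbb{R}_+^{n+1})$, so that $F$ can be approximated by compactly supported $T_2^2$-functions whose images under $\pi_{L_\alpha,M_0}$ are simultaneously in $\mathrm{VMO}$ and in $L^2(\rn)$. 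First I would fix $M,M_1\in\mathbb{N}$ with $M_1\geq M_0$ and $M\geq 2M_1$; since $M_0\geq\max\{\frac{n+2\alpha}{2\alpha},\frac{n}{p\alpha}\}$, these choices are compatible with the hypotheses of Theorems \ref{t2.11} and \ref{t3.4} and of Lemmas \ref{l2.14} and \ref{l4.4}. Given $f$, set $F:=\lf(t^\alpha L_\alpha\r)^M e^{-t^\alpha L_\alpha}\lf(I-e^{-t^\alpha L_\alpha}\r)^{2M_1+1}f$; by Theorem \ref{t3.4}, $F\in T_{2,v}^{p,\infty}(\mathbb{R}_+^{n+1})$, and by Lemma \ref{l4.4}(ii), $\pi_{L_\alpha,M_0}(F)\in\mathrm{VMO}_{L_{\alpha},M_0}^{\frac1p-1}(\rn)$.

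The key step, which I expect to be the main obstacle, is the reproducing formula $\pi_{L_\alpha,M_0}(F)=c\,f$ for a positive constant $c$ independent of $f$. Since $\mathrm{VMO}_{L_{\alpha},M_0}^{\frac1p-1}(\rn)\subset\mathrm{BMO}_{L_{\alpha},M_0}^{\frac1p-1}(\rn)=\big(H_{L_\alpha}^p(\rn)\big)^{*}$ by Theorem \ref{t2.11}, and $H_{L_\alpha}^p(\rn)\cap L^2(\rn)$ is dense in $H_{L_\alpha}^p(\rn)$, it suffices to verify $\langle\pi_{L_\alpha,M_0}(F),g\rangle=c\,\langle f,g\rangle$ for all $g\in H_{L_\alpha}^p(\rn)\cap L^2(\rn)$. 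For such $g$ one has $\lf(t^\alpha L_\alpha\r)^{M_0}e^{-t^\alpha L_\alpha}g\in T_2^p(\mathbb{R}_+^{n+1})$; using that $\pi_{L_\alpha,M_0}$ is the explicit integral on $T_{2,c}^2(\mathbb{R}_+^{n+1})$, the self-adjointness of $L_\alpha$, the $T_2^p$--$T_2^{p,\infty}$ duality of Proposition \ref{p2.13}(iii), and a density argument, one gets $\langle\pi_{L_\alpha,M_0}(F),g\rangle=C_{(\alpha,M_0)}\int_{\mathbb{R}_+^{n+1}}F(x,t)\lf(t^\alpha L_\alpha\r)^{M_0}e^{-t^\alpha L_\alpha}g(x)\,\frac{dx\,dt}{t}$. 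Substituting the definition of $F$, moving the operators from $f$ onto $g$, and then applying the $L^2$-functional calculus identity $\int_0^\infty\lf(t^\alpha L_\alpha\r)^{M+M_0}e^{-2t^\alpha L_\alpha}\lf(I-e^{-t^\alpha L_\alpha}\r)^{2M_1+1}g\,\frac{dt}{t}=c_0\,g$ in $L^2(\rn)$, where $c_0=\int_0^\infty s^{M+M_0}e^{-2s}(1-e^{-s})^{2M_1+1}\frac{ds}{s}\in(0,\infty)$, yields the claim with $c=C_{(\alpha,M_0)}c_0$. The delicate point is that $f$ lies only in $M_\alpha$, not in $L^2(\rn)$, so the interchange of the $x$- and $t$-integrals and the transfer of the operators must be justified by Fubini's theorem using the kernel bounds of Proposition \ref{p2.3}, exactly as in the proof of Lemma \ref{l2.14}; I would also invoke the quadratic estimate showing $\lf(t^\alpha L_\alpha\r)^{M_0}e^{-t^\alpha L_\alpha}g\in T_2^p(\mathbb{R}_+^{n+1})$ for $g\in H_{L_\alpha}^p(\rn)$, which follows from the molecular theory underlying Theorem \ref{t2.7}.

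With the reproducing formula in hand, I would pick $F_k\in T_{2,c}^2(\mathbb{R}_+^{n+1})$ with $F_k\to F$ in $T_2^{p,\infty}(\mathbb{R}_+^{n+1})$, which is possible since $T_{2,v}^{p,\infty}(\mathbb{R}_+^{n+1})$ is the closure of $T_{2,c}^{p,\infty}(\mathbb{R}_+^{n+1})=T_{2,c}^2(\mathbb{R}_+^{n+1})$ in $T_{2,1}^{p,\infty}(\mathbb{R}_+^{n+1})$. By Lemma \ref{l4.4}(i), each $\pi_{L_\alpha,M_0}(F_k)\in L^2(\rn)$ (as $F_k\in T_2^2(\mathbb{R}_+^{n+1})$), and by Lemma \ref{l4.4}(ii), each $\pi_{L_\alpha,M_0}(F_k)\in\mathrm{VMO}_{L_{\alpha},M_0}^{\frac1p-1}(\rn)$ (as $F_k\in T_{2,c}^{p,\infty}(\mathbb{R}_+^{n+1})\subset T_{2,v}^{p,\infty}(\mathbb{R}_+^{n+1})$); note the two extensions of $\pi_{L_\alpha,M_0}$ agree on $T_{2,c}^2(\mathbb{R}_+^{n+1})$, since both coincide there with the defining integral. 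Finally, by Lemma \ref{l4.4}(ii) and Step 1, $\big\|\tfrac1c\pi_{L_\alpha,M_0}(F_k)-f\big\|_{\mathrm{VMO}_{L_{\alpha},M_0}^{\frac1p-1}(\rn)}=\tfrac1c\big\|\pi_{L_\alpha,M_0}(F_k-F)\big\|_{\mathrm{VMO}_{L_{\alpha},M_0}^{\frac1p-1}(\rn)}\ls\|F_k-F\|_{T_2^{p,\infty}}\to0$, so the sequence $\tfrac1c\pi_{L_\alpha,M_0}(F_k)\in\mathrm{VMO}_{L_{\alpha},M_0}^{\frac1p-1}(\rn)\cap L^2(\rn)$ converges to $f$ in $\mathrm{VMO}_{L_{\alpha},M_0}^{\frac1p-1}(\rn)$, which gives the desired density and completes the proof of Lemma \ref{l4.5}.

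Besides the Fubini/functional-calculus identification in Step 1, the only other point requiring care is confirming that the needed parameter inequalities ($M_1\geq\frac{n}{\alpha}(\frac1p-1)$ for Lemma \ref{l2.14}, $M_1\geq\frac{n}{2\alpha}-1$ and $M\geq 2M_1$ for Lemma \ref{l4.4}(ii), $M_0>\frac{n}{p\alpha}$ for Lemma \ref{l4.4}) are all implied by $M_1=M_0\geq\max\{\frac{n+2\alpha}{2\alpha},\frac{n}{p\alpha}\}$ and $M\geq 2M_1$; these are straightforward to check and impose no restriction beyond the stated hypothesis on $M_0$.
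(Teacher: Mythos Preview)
Your proposal is correct and follows essentially the same route as the paper's own proof: the paper likewise fixes auxiliary integers $M_1\ge M_0$ and $M\ge 2M_1+M_0-1$, sets $h:=(t^\alpha L_\alpha)^{M-M_0+1}e^{-t^\alpha L_\alpha}(I-e^{-t^\alpha L_\alpha})^{2M_1+1}g\in T_{2,v}^{p,\infty}(\mathbb{R}_+^{n+1})$ (your $F$ after relabeling $M$), establishes $g=C\,\pi_{L_\alpha,M_0}(h)$ by testing against $f\in H_{L_\alpha}^p\cap L^2$ via Lemma~\ref{l2.14}, and then takes the explicit truncations $h_k:=\chi_{O_k}h\in T_{2,c}^2$ so that $g_k:=C\,\pi_{L_\alpha,M_0}(h_k)\in\mathrm{VMO}_{L_\alpha,M_0}^{\frac1p-1}\cap L^2$ converges to $g$ by Lemma~\ref{l4.4}. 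The only cosmetic difference is that the paper invokes the packaged Lemma~\ref{l2.14} for the reproducing identity rather than re-deriving it, so it does not need to state separately that $(t^\alpha L_\alpha)^{M_0}e^{-t^\alpha L_\alpha}g\in T_2^p$ (though this is used implicitly in its dominated-convergence step).
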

\begin{proof}
Let $M,M_1\in \mathbb{N}$ with $M_1\geq M_{0}$ and $M\geq 2M_1+M_0-1$. Notice that, for any $f\in L^2(\mathbb{R}^n)$, we have
\begin{align*}
f=C\int_{0}^\infty \lf(t^\alpha L_\alpha\r)^M e^{-t^\alpha L_\alpha}\lf(I-e^{-t^\alpha L_\alpha}\r)^{2M_1+1}
t^\alpha L_\alpha e^{-t^\alpha L_\alpha}f(x)\frac{dt}{t}
\end{align*}
in $L^2(\mathbb{R}^n)$, where $C$ is a positive constant.
	
Let $g\in\mathrm{VMO}_{L\alpha,M_0}^{\frac{1}{p}-1}(\mathbb{R}^n)$. By Theorem \ref{t3.4}, we find that
$$\lf(t^\alpha L_\alpha\r)^{M-M
_0+1} e^{-t^\alpha L_\alpha}\lf(I-e^{-t^\alpha L_\alpha}\r)^{2M_1+1}g\in T_{2,v}^{p,\infty}(\mathbb{R}_{+}^{n+1}).$$
Moreover, for any $f\in H_{L_\alpha}^p(\mathbb{R}^n)\cap L^2(\mathbb{R}^n)$, from the definition of
$T_2^{p,\infty}(\mathbb{R}_{+}^{n+1})$ and Lemma \ref{l2.14}, we deduce that
\begin{align*}
&\int_{\mathbb{R}}f(x)g(x)dx\\
&\hs=C\int_{\mathbb{R}_{+}^{n+1}}t^\alpha L_\alpha e^{-t^\alpha L_\alpha}f(x)\lf(t^\alpha L_\alpha\r)^M e^{-t^\alpha L_\alpha}
\lf(I-e^{-t^\alpha L_\alpha}\r)^{2M_1+1}g(x)\frac{dx dt}{t}\\
&\hs=C\int_{\mathbb{R}_{+}^{n+1}}\lf(t^\alpha L_\alpha\r)^{M_0} e^{-t^\alpha L_\alpha}f(x)\lf(t^\alpha L_\alpha\r)^{M-M_0+1}
e^{-t^\alpha L_\alpha}\lf(I-e^{-t^\alpha L_\alpha}\r)^{2M_1+1}g(x)\frac{dx dt}{t}.
\end{align*}
Let $O_k:=\{(x,t):|x|<k,\frac{1}{k}<t<k\}$,
$$h:=\lf(t^\alpha L_\alpha\r)^{M-M_0+1} e^{-t^\alpha L_\alpha}\lf(I-e^{-t^\alpha L_\alpha}\r)^{2M_1+1}g,$$
and $h_k:=\chi_{O_k}h$ for each $k\in\mathbb{N}$. Then $h_k\in T_{2,c}^{p,\infty}$ and
$\|h_k-h\|_{T_{2}^{p,\infty}}\rightarrow 0 $, as $k\rightarrow \infty$.
It then follows from Lemma \ref{l4.4} that $\pi_{L_\alpha,M_0}h_k\in \mathrm{VMO}_{L\alpha,M_0}
^{\frac{1}{p}-1}(\mathbb{R}^n)\cap L^2(\mathbb{R}^n)$ and
\begin{equation}\label{4.3}
\|\pi_{L_\alpha,M_0}(h-h_k)\|_{\mathrm{BMO}_{L\alpha,M_0}^{\frac{1}{p}-1}(\mathbb{R}^n)}\ls
\|h-h_k\|_{T_{2}^{p,\infty}}\rightarrow 0,
\end{equation}
as $k\rightarrow \infty$. Then, by the dominated convergence theorem, we further have
\begin{align*}
\int_{\mathbb{R}^n}f(x)g(x)dx&=C\int_{\mathbb{R}_{+}^{n+1}}\lf(t^\alpha L_\alpha\r)^{M_0} e^{-t^\alpha L_\alpha}f(x)h(x,t)\frac{dxdt}{t}\\
&=C\lim_{k \rightarrow \infty}\int_{\mathbb{R}_{+}^{n+1}}\lf(t^\alpha L_\alpha\r)^{M_0} e^{-t^\alpha L_\alpha}f(x)h_k(x,t)\frac{dxdt}{t}\\
&=C\lim_{k \rightarrow \infty}\int_{\mathbb{R}^{n}}f(x)\int_{0}^\infty \lf(t^\alpha L_\alpha\r)^{M_0} e^{-t^\alpha L_\alpha}h_k(x,t)\frac{dt}{t}dx\\
&=C\lim_{k \rightarrow \infty}\langle f,\pi_{L_\alpha,M_0}(h_k)\rangle\\
&=C\langle f,\pi_{L_\alpha,M_0}(h)\rangle.
\end{align*}
Since $H_{L_\alpha}^p(\mathbb{R}^n)\cap L^2(\mathbb{R}^n)$ is dense in the space
$H_{L_\alpha}^p(\mathbb{R}^n)$, we then obtain that
\begin{align*}
g=C\pi_{L_\alpha,M_0}(h)
\end{align*}
in $\mathrm{VMO}_{L\alpha,M_0}^{\frac{1}{p}-1}(\rn)$. Let $g_k=C\pi_{L_\alpha,M_0}(h_k)$. Then $g_k\in
\mathrm{VMO}_{L\alpha,M_0}^{\frac{1}{p}-1}(\mathbb{R}^n)\cap L^2(\mathbb{R}^n)$. By \eqref{4.3}, we conclude that
$\|g-g_k\|_{\mathrm{BMO}_{L\alpha,M_0}^{\frac{1}{p}-1}(\mathbb{R}^n)}\rightarrow 0$, as $k\rightarrow \infty$,
which completes the proof of Lemma \ref{l4.5}.
\end{proof}

We are now ready to prove Theorem \ref{t4.1}.

\begin{proof}[Proof of Theorem \ref{t4.1}]
From the fact that $\mathrm{VMO}_{L_{\alpha}, M_0}^{\frac{1}{p}-1}\left(\mathbb{R}^{n}\r)\subset
\mathrm{BMO}_{L_{\alpha}, M_0}^{\frac{1}{p}-1}\left(\mathbb{R}^{n}\r)$, we deduce that
$$ \lf( \mathrm{BMO}_{L_{\alpha}, M_0}^{\frac{1}{p}-1}\lf(\mathbb{R}^{n}\r)\r)^*\subset
\lf( \mathrm{VMO}_{L_{\alpha}, M_0}^{\frac{1}{p}-1}\lf(\mathbb{R}^{n}\r)\r)^*.$$
This, together with Theorem \ref{t2.11}, shows that
$$H_{L_\alpha}^p(\mathbb{R}^{n}) \subset\left(\mathrm{VMO}_{L_{\alpha}, M_0}^{\frac{1}{p}-1}
(\mathbb{R}^{n})\right)^{*}.$$
	
On the other hand, let $\ell \in(\mathrm{VMO}_{L_{\alpha}, M_0}^{\frac{1}{p}-1}(\mathbb{R}^{n}))^{*}$.
Let $M,M_1\in \mathbb{N}$ with $M_{1} \geq M_0$ and $ M \geq 2 M_{1}+M_0-1$. For any
$f \in \mathrm{VMO}_{L_{\alpha}, M_0}^{\frac{1}{p}-1}(\mathbb{R}^{n}) \cap L^{2}(\mathbb{R}^{n})$, by
Theorem \ref{t3.4}, we find that
$$\lf(t^\alpha L_\alpha\r)^{M-M_0+1} e^{-t^\alpha L_\alpha}\lf(I-e^{-t^\alpha L_\alpha}\r)^{2M_1+1}
f \in T_{2,v}^{p,\infty}(\mathbb{R}_{+}^{n+1}).$$
Further, from Lemma \ref{l4.4}, it follows that
$$\ell \circ \pi_{L_\alpha,M_0} \in \left(T_{2,v}^{p,\infty}(\mathbb{R}_{+}^{n+1})\right)^{*}.$$
This, together with Theorem \ref{t3.3}, tells us that there exists $g \in \widetilde{{T}_{2}^p}
(\mathbb{R}_{+}^{n+1})$ such that
\begin{align*}
\ell(f) &=\ell \circ \pi_{L_\alpha,M_0}\left(\lf(t^\alpha L_\alpha\r)^{M-M_0+1} e^{-t^\alpha L_\alpha}
\lf(I-e^{-t^\alpha L_\alpha}\r)^{2M_1+1} f \right) \\
&=\int_{\mathbb{R}_{+}^{n+1}}\lf(t^\alpha L_\alpha\r)^{M-M_0+1} e^{-t^\alpha L_\alpha}\lf(I-e^{-t^\alpha
L_\alpha}\r)^{2M_1+1} f(x) g(x, t) \frac{d x d t}{t} .
\end{align*}
Since $g \in \widetilde{{T}_{2}^p}(\mathbb{R}_{+}^{n+1})$, it follows that there exist $T_2^p$-atoms
$\left\{a_{j}\right\}_{j}$ and $\{\lambda\}_{j} \subset \mathbb{C}$ such that $g=$ $\sum_{j} \lambda_{j} a_{j}$
and $\sum_{j}\left|\lambda_{j}\right|^p \le C_p\|g\|_{\widetilde{{T}_2^p}}^p$. For each $k \in \mathbb{N}$,
let $g_{k} \equiv \sum_{j=1}^{k} \lambda_{j} a_{j}$. Then $g_{k} \in T_{2}^{2}(\mathbb{R}_{+}^{n+1})\cap
T_2^p(\mathbb{R}_{+}^{n+1})$. By Lemma \ref{l4.3}, we conclude that
$$\pi_{L_\alpha,{M-M_0+1},2M_1}(g) \in H_{L_\alpha}^p(\mathbb{R}^{n}), \pi_{L_\alpha,{M-M_0+1},2M_1}(g_k)
\in H_{L_\alpha}^p(\mathbb{R}^{n})$$
and
$$
\left\|\pi_{L_\alpha,{M-M_0+1},2M_1}(g) -\pi_{L_\alpha,{M-M_0+1},2M_1}(g_k)
\right\|_{H_{L_\alpha}^p(\mathbb{R}^{n})} \lesssim\left\|g-g_{k}\right\|_{\widetilde{{T}_2^p}} \rightarrow 0,
$$
as $k \rightarrow \infty$. Using the dominated convergence theorem, we further obtain that
$$
\begin{aligned}
\ell(f) &= \int_{\mathbb{R}_{+}^{n+1}}\lf(t^\alpha L_\alpha\r)^{M-M_0+1} e^{-t^\alpha L_\alpha}
\lf(I-e^{-t^\alpha L_\alpha}\r)^{2M_1+1} f(x) g(x, t) \frac{d x d t}{t}\\
& \sim \lim _{k \rightarrow \infty} \int_{\mathbb{R}_{+}^{n+1}} \lf(t^\alpha L_\alpha\r)^{M-M_0+1}
e^{-t^\alpha L_\alpha}\lf(I-e^{-t^\alpha L_\alpha}\r)^{2M_1+1} f(x) g_k(x, t) \frac{d x d t}{t} \\
& \sim \lim _{k \rightarrow \infty} \int_{\mathbb{R}^{n}} f(x)\int_{0}^\infty  \lf(t^\alpha
L_\alpha\r)^{M-M_0+1} e^{-t^\alpha L_\alpha}\lf(I-e^{-t^\alpha L_\alpha}\r)^{2M_1+1}  g_k(x, t) \frac{dt}{t} d x \\
& \sim \lim _{k \rightarrow \infty} \int_{\mathbb{R}^{n}} f(x) \pi_{L_\alpha, {M-M_0+1},
2M_{1}}\left(g_{k}\right)(x) d x \sim \lim _{k \rightarrow \infty}\left\langle f, \pi_{L_\alpha,
{M-M_0+1}, 2M_{1}}\left(g_{k}\right)\right\rangle \\
& \sim \sum_{j} \lambda_{j}\left\langle f, \pi_{L_\alpha, {M-M_0+1}, 2M_{1}}\left(a_{j}
\right)\right\rangle \sim\left\langle f, \pi_{L_\alpha, {M-M_0+1}, 2M_{1}}(g)\right\rangle .
\end{aligned}
$$
	
Applying Lemma \ref{l4.5} and a density argument, we finish the proof of Theorem \ref{t4.1}.
\end{proof}
	
\subsection*{Acknowledgments}
\addcontentsline{toc}{section}{Acknowledgments} 
\hskip\parindent
	The second author was supported by Chinese Universities Scientific Fund (Grant No. 2023TC107). The third author was supported by the National Natural Science Foundation of China (Grant Nos. 11871254 and 12071431), the Fundamental Research Funds for the Central Universities (Grant No. lzujbky-2021-ey18) and the Innovative Groups of Basic Research in Gansu Province (Grant No. 22JR5RA391).
	
\subsection*{Data availability}
\addcontentsline{toc}{section}{Data availability} 
\hskip\parindent
This manuscript has no associated data.

\section*{Declarations}
\subsection*{Conflict of interest}
\addcontentsline{toc}{section}{Conflict of interest} 
\hskip\parindent
The authors declare that they have no conflict of interest.

\bigskip
	
\medskip
	
\noindent Qiumeng Li and Haibo Lin (Corresponding author)
	
\smallskip
	
\noindent College of Science, China Agricultural University,
Beijing 100083, People's Republic of China
	
\smallskip
	
\noindent{\it E-mails}: \texttt{liqiumeng@cau.edu.cn} (Q. Li)
	
\hspace{1.08cm}\texttt{haibolincau@126.com} (H. Lin)
	
\bigskip
	
\noindent Sibei Yang
	
\smallskip
	
\noindent School of Mathematics and Statistics, Lanzhou University, Lanzhou 730000, People's Republic of China
	
\smallskip
	
\noindent{\it E-mails}: \texttt{yangsb@lzu.edu.cn} (S. Yang)

\end{document}